\documentclass[12pt]{amsart}

\usepackage{geometry}                
\geometry{letterpaper}                   
\usepackage{graphicx}
\usepackage{amsmath}
\usepackage{amssymb}
\usepackage{epstopdf}
\usepackage{enumerate}
\DeclareGraphicsRule{.tif}{png}{.png}{`convert #1 `dirname #1`/`basename #1 .tif`.png}

\newtheorem{thm}{Theorem}[section]
\newtheorem{lemma}[thm]{Lemma}

\newtheorem{prop}[thm]{Proposition}

\newcommand{\N}{\mathbb{N}}

\begin{document}
	
\title[Self-affine sets with holes]{Dimension of self-affine sets with holes}

\author{Andrew Ferguson}
\address{Andrew Ferguson\\School of Mathematics\\University of Bristol\\ University Walk\\Bristol\\BS8 1TW\\UK.}
\email{andrew.ferguson@bris.ac.uk}

\author{Thomas Jordan}
\address{Thomas Jordan\\School of Mathematics\\University of Bristol
\\University Walk\\Bristol\\BS8 1TW\\UK.}
\email{thomas.jordan@bris.ac.uk}

\author{Micha\l \,Rams}
\address{Micha\l\, Rams\\Institute of Mathematics\\Polish Academy of Sciences\\
UL. \'{S}niadeckich 8\\ 00-956 Warszawa\\ Poland.}
\email{m.rams@impan.gov.pl}

\thanks{A.F. acknowledges support from EPSRC grant EP/I024328/1 and the University of Bristol. M.R. was partially supported by MNiSWN grant N201 607640 (Poland).}

\date{\today}                                       

\maketitle

\begin{abstract}In this paper we compute the dimension of a class of dynamically defined non-conformal sets.  Let $X\subseteq\mathbb{T}^2$ denote a Bedford-McMullen set and $T:X\to X$ the natural expanding toral endomorphism which leaves $X$ invariant.  For an open set $U\subset X$ we let  \begin{equation*}X_U=\{x\in X\,:\,T^k(x)\not\in U\text{ for all }k\}.\end{equation*}  We investigate the box and Hausdorff dimensions of $X_U$ for both a fixed Markov hole and also when $U$ is a shrinking metric ball.  We show that the box dimension is controlled by the escape rate of the measure of maximal entropy through $U$, while the Hausdorff dimension depends on the escape rate of the measure of maximal dimension. 
\end{abstract}

\section{Introduction and statement of results}

Let $(X,d)$ be a compact metric space and $T:X\to X$ a continuous map.  For an open set $U\subset X$ we consider the set of points which under forward iteration do not enter $U$, i.e.

\begin{equation*}X_U=\{x\in X\,:\,T^k(x)\not\in U\text{ for all }k\}.\end{equation*} The purpose of this paper is to investigate the box and Hausdorff dimensions of the set $X_U$.  This problem is far from new.  Urba\'{n}ski considered precisely this question for uniformly expanding \cite{Urb87} and non-uniformly expanding \cite{Urb89} endomorphisms of the circle proving, amongst other things, that the map $\epsilon\to {\rm dim}_H(X_{B_{\epsilon}(z)})$ is continuous.
	
In \cite{Hen92} Hensley considered this problem in the setting of continued fractions: for $x\in (0,1)\setminus \mathbb{Q}$ we write 
\begin{equation*}x=[a_1,a_2,\cdots]=\frac{1}{a_1+\frac{1}{a_2+\cdots}}.\end{equation*} For a positive integer $n$ we let 
\begin{equation*}E_n=\{x=[a_1,a_2,\ldots]\,:\,a_k\leq n\text{ for all }k\}.\end{equation*}  

The set $E_n$ may be understood in terms of the Gauss map $T:[0,1]\to [0,1]$ defined by 
\begin{equation*}T(x)=\begin{cases} \{\frac{1}{x}\} & \text{ if }x\neq 0 \\
	0 & \text{ if }x=0.\end{cases}\end{equation*} 
Then the set $E_n$ is the set of points which do not enter the set $[0,(n+1)^{-1})$ under forward iteration.	
	
Hensley obtained quite detailed bounds on the spectral radius of a perturbed transfer operator which was then used to prove an asymptotic formula for the dimension of $E_N$, 

\begin{equation*}{\rm dim}_H(E_n)=1-\frac{6}{\pi^2 n}-\frac{72\log n}{\pi^4 n^2}+O(n^{-2}).\end{equation*}

Another result of interest in this line of enquiry is that of Liverani and Maume-Deschamps \cite{LivMau03} who proved that if $T$ is a Lasota-Yorke map then the Hausdorff dimension of $X_U$ map be computed implicitly using a formula that is analogous to Bowen's equation.  

A central object in the study of dynamical systems with holes is the \emph{escape rate} of a measure.  If $\mu$ is a $T$-invariant probability measure we define the escape rate of $\mu$ through $U$ to be the quantity
	
\begin{equation}\label{eq:escratdef}r_\mu(B_\epsilon(z))=-\limsup_{k\to\infty} \frac{1}{k}\log\mu\{x\in X\,:\,T^i(x)\not\in B_\epsilon(z)\text{ for }0\leq i<k \}\end{equation}
	
Several of our results rely heavily on recent advances in the dependence of the quantity $r_\mu(B_\epsilon(z))$ has on $\epsilon$ and $z$.  We now give a brief overview of these developments.  

In \cite{BunYur08} Bunimovich and Yurchenko considered the case that $T$ is the doubling map and $\mu$ the Lebesgue measure, proving that

\begin{equation}\lim_{n\to\infty}\frac{r_{\mu}(I_n(z))}{\mu(I_n(z))}=\begin{cases} 1 & \text{ if }z\text{ is non-periodic} \\
1-2^{-p} & \text{ if }z\text{ has prime period }p\end{cases}\label{eq:BunYur}	\end{equation} here $\{I_n(z)\}_{n=1}^{\infty}$ denotes a nested family of dyadic intervals for which $\bigcap_{n=1}^{\infty} I_n(z)=\{z\}$. 

Later, Keller and Liverani \cite{KelLiv09} proved a general perturbation result which, providing the correct functional analytic setup holds, yields a first order expansion for the spectral radius of perturbed transfer operator.  The leading term in this expansion displays a similar dependence on how preimages of $B_\epsilon(z)$ intersect, which in the case of a uniformly expanding map reduces to the periodicity of $z$.  This perturbation result was then applied to the setting of piecewise expanding maps of the interval to obtain various statistical results, including a generalisation of the equation (\ref{eq:BunYur}).  A further refinement to this formula, which includes both smooth and non-smooth higher order terms, for the case of the doubling map was recently obtained by Dettmann \cite{Det11}.

In \cite{FerPol10} Pollicott and the first author show that this functional setup  \cite{KelLiv99,KelLiv09} applies in the setting of subshifts of finite type and then use an approximation argument to show  that similar conclusions can be arrived at when $T$ is conformal and expanding  and $\mu$ a Gibbs measure.  Another problem considered in that paper was the behaviour of the Hausdorff dimension of $X_{B_\epsilon(z)}$ for $\epsilon$ small.  Suppose that $T$ is $C^{1+\alpha}$ and expanding and that $X$ is a repeller for $T$.  By a result of Ruelle \cite{Rue82} the Hausdorff dimension of $X$ is given implicitly by Bowen's equation, that is $s={\rm dim}_H(X)$ where 
\begin{equation*} P(-s\log | dT |):=\sup\left\{h_\nu - s \int \log |d T | d\nu\,:\,\nu \text{ is }T\text{-invariant and }\nu(X)=1\right\}=0.\end{equation*} 
Providing the map $T$ is topologically mixing this supremum is attained by a unique $T$-ergodic measure, equivalent to the $s$-dimensional Hausdorff measure, which we denote by $\mu$.  Under these assumptions it was shown \cite{FerPol10}[Theorem 1.2] that
\begin{equation*} \lim_{\epsilon\to 0} \frac{s-{\rm dim}_H(X_{B_\epsilon(z)})}{\mu(B_\epsilon(z))} = \frac{1}{\int \log | dT| d\mu }\begin{cases}  1 & \text{ if }z\text{ is non-periodic} \\
	1- |d_z T^p |^{-s} & \text{ if }z\text{ has prime period }p\end{cases}\end{equation*} 
 
In this paper we continue this line of work by investigating the box and Hausdorff dimensions of $X_U$ for a class of non-conformal systems known as Bedford-McMullen sets, which we now briefly describe: Fix integers $2\leq m < n$ and let $D\subset\{0,1,\ldots, n-1\}\times \{0,1,\ldots,m-1\}$.  For $(i,j)\in D$ write

\begin{equation*}F_{(i,j)}(x,y)=\left(\frac{x+i}{n},\frac{y+j}{m}\right).\end{equation*}

Let $X$ denote the unique non-empty compact set satisfying

\begin{equation*}X=\bigcup_{(i,j)\in D}F_{(i,j)}(X).\end{equation*}

	\begin{figure}
	\begin{center}
	\includegraphics[width=0.8\textwidth]{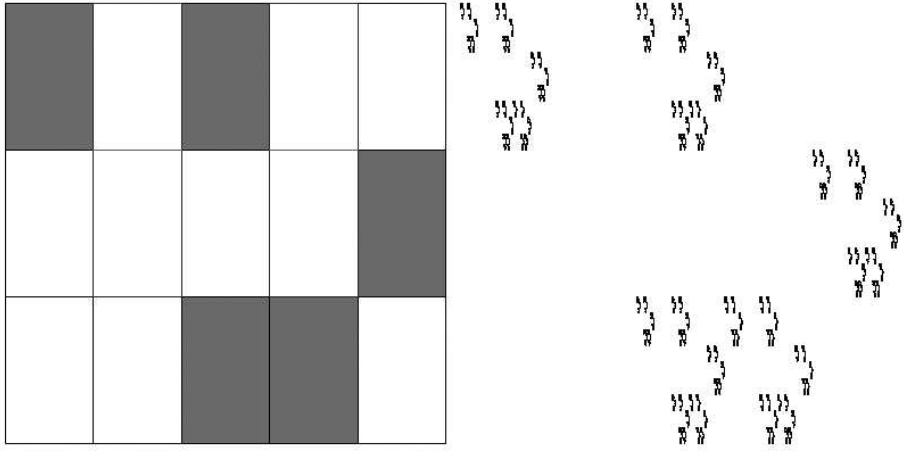}
	\end{center}
	\caption{A generating pattern of a Bedford-McMullen carpet (left) and the associated invariant set $X$ (right).}\label{fig:pic}
	\end{figure}

Sets of this kind were first studied by Bedford \cite{Bed84} and McMullen \cite{McM84} who independently calculated the Hausdorff and box dimension of $X$. They obtained that if $z(j)=\#\{0\leq i < n \,:\, (i,j)\in D\}$, $\eta=\frac{\log m}{\log n}$ and $\pi_D:D\to\{0,1,\ldots,m-1\}$ denotes projection onto the second coordinate then

\begin{equation}\label{eq:hausdef}{\rm dim}_H(X)=s:=\frac{1}{\log m}\log\left(\sum_{ j=0}^{m-1} z(j)^\eta \right)\end{equation} 
and  
\begin{equation}\label{eq:boxdef}{\rm dim}_B(X)=\frac{\log \# D}{\log n}+(1-\eta)\frac{\log \#\pi_D(D)}{\log m}.\end{equation}  
	For extensions of these formulae to sets modelled by subshifts we refer the reader to \cite{KenPer96, KenPer96a, Oli10, Yay09}.
	
When considered as a subset of the torus $\mathbb{T}^2$, the set $X$ is invariant under the expanding toral endomorphism $T(x,y)=(nx,my) \mod 1$.  
	
Let $U\subset \mathbb{T}^2$ consist of a finite union of sets of the form $[i n^{-l},(i+1)n^{-l}]\times [j m^{-k},(j+1)m^{-k}]$ for positive integers $i,j,l,k$, we shall refer to such a set as \emph{Markov}.  Our first two results concern the dependence of the box and Hausdorff dimensions of $X_U$ on such a Markov hole.

Let $\mu_{{\rm max}}$ denote the measure of maximal entropy for $T:X\to X$. i.e. the unique measure satisfying

\begin{equation*} \mu_{{\rm max}} = (\# D)^{-1}\sum_{(i,j)\in D}  (F_{i,j})_*(\mu_{{\rm max}}). \end{equation*} We remark that this corresponds to the $(1 /\#D)_{(i,j)\in D}$ Bernoulli measure on the full shift $D^\mathbb{N}$. However, in general, this is not the measure of maximal Hausdorff dimension. The measure of maximal dimension is the self-affine measure with weights $(\frac{z(j)^{\eta-1}}{m^s})_{(i,j)\in D}$, where $s=\dim_{H} X$. In other words the unique measure satisfying
\begin{equation*} \mu_{{\rm dim}} = \sum_{(i,j)\in D} \frac{z(j)^{\eta-1}}{m^s} (F_{i,j})_*(\mu_{{\rm dim}}). \end{equation*} 

  Let $\pi:\mathbb{T}^2\to\mathbb{S}^1$ denote the projection onto the second coordinate.  If $S:\mathbb{S}^1\to\mathbb{S}^1$ denotes multiplication by $m \mod 1$ then it is easy to see that $\pi T=S\pi$.  Let $\tilde{\mu}_{{\rm max}}$ denote the measure of maximal entropy for $S:\pi(X)\to\pi(X)$.  The box dimension of $X_U$ depends not only on the hole $U$ but also on the `size' of $\pi(X_U)$: let

\begin{equation*}\tilde{U}=\{y\in\pi(U)\,:\,\pi^{-1}\{y\}\subset U\}\end{equation*} in which case we see that $\pi(X_U)=\pi(X)\setminus \bigcup_{k=0}^{\infty}S^{-k}(\tilde{U})$.
	
We now state the result concerning the box dimension of the survivor set $X_U$.

\begin{thm}Suppose that $U\subset X$ is a Markov sets and that $T:X_U\to X_U$ is topologically mixing, then
\begin{equation}\label{eq:LDbox}{\rm dim}_B(X_U)={\rm dim}_B(X)-\frac{\eta r_{\mu_{\rm max}}(U)+(1-\eta)r_{\tilde{\mu}_{\rm max}}(\tilde{U})}{\log m}\end{equation}	 where $r_{\cdot}(\cdot)$ denotes the escape rate as defined in equation (\ref{eq:escratdef}).
	 \label{thm:boxMark}\end{thm}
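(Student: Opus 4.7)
The strategy is to realise $X_U$ as the image of a subshift of finite type, apply a Bedford--McMullen-type box dimension formula for SFT-coded carpets, and convert the resulting topological entropies into the escape rates appearing in (\ref{eq:LDbox}). Since $U$ is a finite union of Markov rectangles, the condition $T^k(x)\notin U$ for all $k\geq 0$ pulls back under the natural coding $\pi_X\colon D^{\mathbb{N}}\to X$ to a shift-invariant constraint of finite type; let $\Sigma_U\subset D^{\mathbb{N}}$ denote the resulting SFT, so that $X_U=\pi_X(\Sigma_U)$ modulo a boundary set of box dimension $0$, and observe that topological mixing of $T\colon X_U\to X_U$ transfers to $\Sigma_U$. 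The analogous construction applied to $S$ and $\tilde U$ produces an SFT $\tilde\Sigma_{\tilde U}\subset\pi_D(D)^{\mathbb{N}}$, and the identity $\pi(X_U)=\pi(X)\setminus\bigcup_k S^{-k}(\tilde U)$ recorded in the introduction yields $\pi_D\Sigma_U=\tilde\Sigma_{\tilde U}$.

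Next I would carry out the Bedford--McMullen approximate-square count adapted to this SFT setting. With $l=\lfloor k\eta\rfloor$, an approximate square of side $m^{-k}$ meeting $X_U$ is indexed by a partial pattern $(\omega_1,\ldots,\omega_l,j_{l+1},\ldots,j_k)$ extendible in $\Sigma_U$. Grouping by the length-$k$ $y$-projection and using topological mixing of $\Sigma_U$ to show that the number of admissible $x$-completions above a given admissible $y$-prefix of length $l$ is comparable to $\#\Sigma_U^{(l)}/\#\tilde\Sigma_{\tilde U}^{(l)}$ up to subexponential factors, one obtains the SFT analogue of (\ref{eq:boxdef}), namely
\begin{equation*}
\dim_B X_U=\frac{h_{\top}(\Sigma_U)}{\log n}+(1-\eta)\,\frac{h_{\top}(\tilde\Sigma_{\tilde U})}{\log m}.
\end{equation*}
This formula is essentially contained in the works \cite{KenPer96,KenPer96a,Oli10,Yay09} cited in the introduction.

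Finally I would convert topological entropies to escape rates. Because $\mu_{\max}$ is the uniform Bernoulli measure on $D^{\mathbb{N}}$, the measure of the $k$-step survivor set equals $\#\Sigma_U^{(k)}/\#D^k$ up to a bounded factor depending only on the depth of $U$, whence $r_{\mu_{\max}}(U)=\log\#D-h_{\top}(\Sigma_U)$; a parallel argument gives $r_{\tilde\mu_{\max}}(\tilde U)=\log\#\pi_D(D)-h_{\top}(\tilde\Sigma_{\tilde U})$. Substituting these identities into the displayed formula and regrouping against (\ref{eq:boxdef}) delivers (\ref{eq:LDbox}). The main obstacle is establishing the uniform approximate-square count in the middle step: in the full-shift case the number of admissible $x$-prefixes above a given $y$-prefix is the deterministic product $\prod_r z(j_r)$, while for a general SFT this quantity fluctuates with $\underline j$, and one must use the topological-mixing hypothesis (applied to $\Sigma_U$) to absorb these fluctuations into a subexponential error term.
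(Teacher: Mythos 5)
Your overall architecture --- code $X_U$ by an SFT $\Sigma_U$, invoke a sofic box-dimension formula, and convert entropies to escape rates via $r_{\mu_{\max}}(U)=\log\#D-h_{\rm top}(\Sigma_U)$ --- is a genuinely different route from the paper's (which counts approximate squares directly against the escape rates using the Collet--Martinez--Schmidt spectral theory and the gluing Lemma \ref{thm:glue}), and your final substitution step is correct arithmetic. But the pivotal middle step fails as stated. You claim that, by topological mixing, the number of admissible $x$-completions above a given admissible $y$-prefix of length $l$ is comparable to $\#\Sigma_U^{(l)}/\#\tilde\Sigma_{\tilde U}^{(l)}$ up to subexponential factors. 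This is false already for the unperturbed full shift: there the fibre over $j_1\cdots j_l$ has cardinality $\prod_r z(j_r)$ (as your own closing sentence records), which oscillates between $(\min_j z(j))^{l}$ and $(\max_j z(j))^{l}$, i.e.\ exponentially around the mean $(\#D/\#\pi_D(D))^{l}$, whenever the $z(j)$ are not all equal. Topological mixing cannot repair this, since the full shift is mixing. The repair is to transpose the sum: enumerate approximate squares by their surviving $x$-prefix $w\in\Sigma_U^{([\eta k])}$ and use mixing of $\Sigma_U$ (hence of its factor $\tilde\Sigma_{\tilde U}$) to show that every such $w$ admits $\asymp e^{(k-[\eta k])h_{\rm top}(\tilde\Sigma_{\tilde U})}$ compatible $y$-extensions to length $k$, uniformly in $w$ up to constants depending only on the mixing time; summed this way the count is $\asymp e^{[\eta k]h_{\rm top}(\Sigma_U)+(k-[\eta k])h_{\rm top}(\tilde\Sigma_{\tilde U})}$ and your displayed formula follows. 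This is, in effect, exactly the decomposition the paper uses, with measures in place of cardinalities.

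Two further cautions. First, the identity $r_{\mu_{\max}}(U)=\log\#D-h_{\rm top}(\Sigma_U)$ is not free bookkeeping: the number of length-$k$ words surviving for $k$ steps need not match, even at exponential scale, the number of length-$k$ prefixes of genuine infinite survivors unless one invokes Proposition \ref{thm:colmarshm} (or an equivalent mixing argument showing a positive proportion of $k$-step survivors extend to infinite ones). So your route relocates, rather than avoids, the Collet--Martinez--Schmidt input. Second, the identity $\pi_D\Sigma_U=\tilde\Sigma_{\tilde U}$ requires gluing, for a fixed $\tau$, the local $\omega$-choices witnessing avoidance of $V$ at each separate time into a single global $\omega$; the paper asserts this without proof as well, so it does not count against you, but it is not purely formal once the defining cylinders of $V$ have length at least two.
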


For a probability vector $\underline{p}=(p_d)_{d\in D}\in \Delta_D=\{(q_d)_{d\in D}\:\,q_d\geq 0\text{ for all }d\in D\text{ and }\sum_{q\in D}q_d=1\}$ we let $\mu_{\underline{p}}$ denote the associated Bernoulli measure, that is the 	unique Borel probability measure satisfying

\begin{equation*} \mu_{\underline{p}} = \sum_{(i,j)\in D} p_{(i,j)} (F_{i,j})_*(\mu_{\underline{p}}). \end{equation*}
	
Concerning the Hausdorff dimension of $X_U$ we obtain the following bounds.

\begin{thm}Suppose that $U\subset X$ is a Markov sets and that $T:X_U\to X_U$ is topologically mixing, then

\begin{equation}\label{eq:LDhaus}{\rm dim}_H(X_U)\leq \sup_{\underline{p}\in\Delta_D}\left\{{\rm dim}_H(\mu_{\underline{p}})-\frac{\eta r_{\mu_{\underline{p}}}(U)+(1-\eta)r_{\pi_*(\underline{p})}(\tilde{U})}{\log m}\right\}.\end{equation}
Furthermore, we obtain lower bounds for the two extreme cases $\tilde{U}=\emptyset$ and $\tilde{U}=\pi(U)$.  Let $l\in\mathbb{N}$ denote any integer for which $[i n^{-l},(i+1)n^{-l}]\times [j m^{-l},(j+1)m^{-l}]\cap U\neq \emptyset$ implies that $[i n^{-l},(i+1)n^{-l}]\times [j m^{-l},(j+1)m^{-l}]\subseteq U$.  In which case:
\begin{equation*}{\rm dim}_H(X_U) \geq \begin{cases} \sup_{\underline{p}\in\Delta_D}\left\{{\rm dim}_H(\mu_{\underline{p}})+\frac{1}{\log n }\int \log\left(\frac{\mu_{\underline{p}}(\pi^{-1}(I_l(y))\cap U^c)}{\pi_*(\mu_{\underline{p}})(I_l(y))}\right) d\pi_*(\mu_{\underline{p}})(y)\right\} & \text{ if }\tilde{U}=\emptyset\\
	\sup_{\underline{p}\in\Delta_D}\left\{{\rm dim}_H(\mu_{\underline{p}})-\frac{r_{\mu_{\underline{p}}}(U)}{\log m}\right\} & \text{ if }\tilde{U}=\pi(U)\end{cases}\end{equation*} where for $I_l(y)$ denotes the unique interval of the form $[j m^{-l},(j+1)m^{-l})$ containing  $y$.
\label{thm:hausMark}\end{thm}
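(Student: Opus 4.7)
The upper bound is the main analytic content. I would cover $X_U$ by level-$k$ approximate squares, the natural $n^{-\lfloor\eta k\rfloor}\times m^{-k}$ rectangles used throughout the dimension theory of Bedford--McMullen carpets, and bound the number $N_k^{\mathrm{surv}}$ of them that intersect $X_U$. Each approximate square is indexed by $(w,b)$ with $w\in D^{\lfloor\eta k\rfloor}$ and $b\in\pi(D)^{k-\lfloor\eta k\rfloor}$, and for any $\underline{p}\in\Delta_D$ its $\mu_{\underline{p}}$-mass is $\prod_{i<\lfloor\eta k\rfloor} p(w_i)\cdot\prod_{i\geq\lfloor\eta k\rfloor} \pi_*p(b_i)$; Shannon--McMillan gives the typical logarithmic rate $-\dim_H(\mu_{\underline{p}})\log m$, so one has $\sum_{A}\mu_{\underline{p}}(A)=1$ with $\mu_{\underline{p}}$-typical squares contributing mass $\asymp m^{-k\dim_H(\mu_{\underline{p}})}$.

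The heart of the upper bound is a two-scale large deviations estimate applied separately to the two pieces of the approximate-square data. Decomposing the surviving squares by the empirical type of $(w,b)$, the number of surviving $D$-words of length $l=\lfloor\eta k\rfloor$ with empirical measure close to $\underline{p}$ is bounded above by $\exp(l[H(\underline{p})-r_{\mu_{\underline{p}}}(U)]+o(l))$, directly from the definition of the escape rate applied to $\mu_{\underline{p}}$ combined with standard Sanov-type type-counting; similarly, the number of surviving $\pi(D)$-extensions of length $k-l$ with empirical measure close to $\pi_*\underline{p}$ is bounded by $\exp((k-l)[H(\pi_*\underline{p})-r_{\pi_*(\underline{p})}(\tilde{U})]+o(k))$. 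Summing $m^{-ks}$ over surviving approximate squares, applying Laplace's principle in the type parameter, and collapsing the two type parameters into a single $\underline{p}\in\Delta_D$ yields $\mathcal{H}^s(X_U)<\infty$ for $s$ larger than the claimed supremum, which is the upper bound.

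For the lower bound in the case $\tilde{U}=\emptyset$, I would construct for each $\underline{p}\in\Delta_D$ a measure $\lambda_{\underline{p}}$ supported on $X_U$ as follows: sample the column (i.e.\ $\pi$-projected) sequence as i.i.d.\ $\pi_*\underline{p}$, and then, conditional on the column sequence, sample the length-$l$ $D$-blocks from the $\mu_{\underline{p}}$-conditional distribution restricted to $U^c$. The assumption $\tilde{U}=\emptyset$ guarantees that every column contains at least one $U$-complement cell, so the construction is well defined. A pointwise-dimension computation on level-$k$ approximate squares via Shannon--McMillan gives $\dim_H(\lambda_{\underline{p}})$ as the sum of $H(\pi_*\underline{p})/\log m$ in the $y$-direction and, in the $x$-direction at rate $\log n$, the expected $\underline{p}$-conditional entropy given the column plus the log-normalization $\log(\mu_{\underline{p}}(\pi^{-1}(I_l(y))\cap U^c)/\pi_*\mu_{\underline{p}}(I_l(y)))$ averaged against $\pi_*\mu_{\underline{p}}$. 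Billingsley's lemma then yields the claimed lower bound.

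The case $\tilde{U}=\pi(U)$ reduces to a one-dimensional conformal problem, since then $U=\pi^{-1}(\tilde{U})$ and $X_U$ has the product-like slice structure in which the $y$-coordinate is constrained to the 1D survivor set $\pi(X_U)$ under the $\times m$ expanding map $S$ while the $x$-coordinate is otherwise unconstrained on $X$. Applying the conformal escape-rate dimension formula of Ferguson--Pollicott to $\pi(X_U)$, and lifting by independent $\underline{p}$-conditionals in the $x$-direction, produces a measure on $X_U$ of dimension $\dim_H(\mu_{\underline{p}})-r_{\mu_{\underline{p}}}(U)/\log m$, using the identity $r_{\mu_{\underline{p}}}(U)=r_{\pi_*(\underline{p})}(\tilde{U})$ that holds here. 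The main technical obstacle is the two-scale large deviations estimate of the second paragraph: the escape rate $r_{\mu_{\underline{p}}}(U)$ is defined as a $\limsup$ and its dependence on $\underline{p}$ is only a priori upper semi-continuous, so the Sanov-type counting must be coupled to it with care; additionally, the boundary effects at the transition position $\lfloor\eta k\rfloor$ between the $D$-cylinder structure and the $\pi(D)$-extension must be controlled using the Markov property of $U$ to localize them to $o(k)$ corrections.
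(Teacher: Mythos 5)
Your upper bound has a genuine gap at the step you describe as ``collapsing the two type parameters into a single $\underline{p}\in\Delta_D$.'' An approximate square at level $k$ is indexed by a head $w\in D^{[\eta k]}$ and a tail $b\in\pi(D)^{k-[\eta k]}$, and nothing links the empirical type of $b$ to the projection of the empirical type of $w$: for \emph{every} pair $(\underline{p},\underline{q})\in\Delta_D\times\Delta_{\pi(D)}$ there are roughly $e^{[\eta k](H(\underline{p})-r_{\mu_{\underline{p}}}(V))}\cdot e^{(k-[\eta k])(H(\underline{q})-r_{\underline{q}}(\tilde{V}))}$ surviving squares realizing those two types simultaneously. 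Hence summing $m^{-ks}$ at a fixed scale $k$ and applying Laplace's principle gives the \emph{decoupled} bound $\sup_{\underline{p}}\{\cdots\}+\sup_{\underline{q}}\{\cdots\}$; already for $U=\emptyset$ this returns $\dim_B(X)$ rather than $\dim_H(X)$, so no single-scale Sanov argument can yield the claimed supremum over a single $\underline{p}$. The missing ingredient is McMullen's scale-selection device: one sets $\Psi_k(\tau)=H(\underline{q}_k(\tau))+P_{\tilde{\Sigma}_{\tilde{V}}}(\log q_{k,\cdot}(\tau))$ for the empirical type $\underline{q}_k(\tau)$ of the first $k$ symbols of $\tau$, proves via concavity of $H+E$ and a boundedness/contradiction argument that $\liminf_{k\to\infty}\bigl(\Psi_{k-[\eta k]}(\tilde{\sigma}^{[\eta k]}\tau)-\Psi_{[\eta k]}(\tau)\bigr)\leq 0$ for every $\tau$, and then covers each point of $X_U$ only at those infinitely many scales $k$ at which the tail functional is dominated by the head functional (this is Lemma \ref{thm:hausmcm1} and condition (v) in the cover). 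That lemma is the essential content of the upper bound and your proposal omits it entirely; your closing remarks identify the semicontinuity of $\underline{p}\mapsto r_{\mu_{\underline{p}}}(U)$ and the boundary at $[\eta k]$ as the main obstacles, but those are secondary.

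The lower bounds are closer to correct. For $\tilde{U}=\emptyset$ your construction is essentially the paper's, except that independently sampled length-$l$ blocks can enter $U$ at a time straddling two consecutive blocks; one must build a Markov (not i.i.d.) block measure with a compatibility condition on adjacent blocks, as the paper does with the function $\rho_k$. For $\tilde{U}=\pi(U)$ your product-structure reduction is a genuinely different route from the paper's (which compares the Gibbs measure on the survivor subshift with $\mu_{\underline{p}}$ via $\mu_U(C_k)\asymp\mu_{\underline{p}}(C_k)e^{k r_{\mu_{\underline{p}}}(U)}$ and then evaluates approximate squares), and it is viable, but as stated it does not quite produce the claimed exponent: the natural measure you place on $\pi(X_U)$ is the surviving Gibbs measure, whose entropy is not $H(\pi_*\underline{p})-r_{\pi_*(\underline{p})}(\tilde{U})$; to reach the stated bound you still need the Gibbs comparison of its cylinder masses with $\pi_*(\mu_{\underline{p}})(\text{cyl})\,e^{kr}$, which is precisely the mechanism the paper uses.
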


Using an approximation argument we are able to extend Theorems \ref{thm:boxMark} and \ref{thm:hausMark} to the case of shrinking metric balls. For a measure $\nu$ with support in $X$ and $x\in {\rm supp}(\nu)$ we define the lower local dimension of $\nu$ at $x$ to be the quantity

\begin{equation*}\underline{{\rm dim}}_{\rm loc}(\nu)(x)=\liminf_{\epsilon\to 0}\frac{\log\nu(B_\epsilon(x))}{\log \epsilon}.\end{equation*}  

As with in the case of escape rates and the dimension for conformal systems with holes we find that the position of the hole has an effect on the dimension.  Define functions $d_B,\tilde{d}_B:X\to\mathbb{R}$  by

\begin{equation*}\begin{split}d_B(z)= & \begin{cases} 1 & \text{ if }z\text{ is non-periodic} \\
	1-\#D^{-p} & \text{ if }z\text{ has prime period }p.\end{cases}\\
	\tilde{d}_B(z)= & \begin{cases} 1 & \text{ if }\pi(z)\text{ is non-periodic} \\
		1-\#\pi(D)^{-p} & \text{ if }\pi(z)\text{ has prime period }p.\end{cases}\end{split}\end{equation*}
		
Our first result regarding the behaviour of the box dimensions of the set $X_{B_\epsilon(z)}$ is the following.

\begin{thm}Let $\mu_{{\rm max}}$ denote the measure of maximal entropy for $T:X\to X$.  Let $\nu$ denote a $T$-invariant ergodic Borel probability measure with ${\rm supp}(\nu)=X$.  Furthermore we assume that if $(0,0)\in X$ then $\underline{{\rm dim}}_{\rm loc}(\pi_*(\nu))(0)>0$. We have
\begin{enumerate}
	\item If $ z(j) \leq 1$ for all $j$ then for $\nu$-almost all $z$
	\begin{equation*}\lim_{\epsilon\to 0}\frac{{\rm dim}_B(X)-\overline{{\rm dim}}_B(X_{B_\epsilon(z)})}{\mu_{\rm max}(B_\epsilon(z))} = \frac{1}{\log m}\left(\eta d_B(z)+(1-\eta)\tilde{d}_B(z)\right).\end{equation*}
		\item If $z(j) > 1$ for some $j$ then for $\nu$-almost all $z$
		\begin{equation*} \lim_{\epsilon\to 0} \frac{{\rm dim}_B(X)-\overline{{\rm dim}}_B(X_{B_\epsilon(z)})}{\mu_{\rm max}(B_\epsilon(z))} = \frac{\eta d_B(z)}{\log m}.\end{equation*}
\end{enumerate}

The same holds also for the lower box dimension $\underline{{\rm dim}}_B$.  
\label{thm:boxmet}
\end{thm}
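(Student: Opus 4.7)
The plan is to sandwich the Euclidean ball $B_\epsilon(z)$ between two Markov approximations, apply Theorem~\ref{thm:boxMark} to each, and control the resulting escape rates in the limit $\epsilon\to 0$. Choose integers $l=l(\epsilon)$ and $k=k(\epsilon)$ with $n^{-l}\asymp\epsilon\asymp m^{-k}$, and let $U_\epsilon^-$ (resp.\ $U_\epsilon^+$) be the union of rectangles of the form $[in^{-l},(i+1)n^{-l}]\times[jm^{-k},(j+1)m^{-k}]$ that lie in (resp.\ meet) $B_\epsilon(z)$. Then $U_\epsilon^-\subseteq B_\epsilon(z)\subseteq U_\epsilon^+$, so monotonicity of the survivor set gives
\[
{\rm dim}_B(X_{U_\epsilon^+})\leq\overline{{\rm dim}}_B(X_{B_\epsilon(z)})\leq{\rm dim}_B(X_{U_\epsilon^-}),
\]
and for $\nu$-a.e.\ $z$ the hole avoids any short periodic orbit, so $T$ is topologically mixing on each $X_{U_\epsilon^\pm}$ for all small $\epsilon$ and Theorem~\ref{thm:boxMark} applies to both approximants.

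Next one analyses the two escape rates as $\epsilon\to 0$. The projection $S:\pi(X)\to\pi(X)$ is the conformal map $y\mapsto my\bmod 1$ with uniform Bernoulli measure $\tilde\mu_{\rm max}$, so \cite[Theorem~1.2]{FerPol10} applies directly and, whenever $\tilde U_\epsilon^\pm$ is non-empty, gives $r_{\tilde\mu_{\rm max}}(\tilde U_\epsilon^\pm)/\tilde\mu_{\rm max}(\tilde U_\epsilon^\pm)\to\tilde d_B(z)$. For the other escape rate one lifts to the symbolic coding, where $\mu_{\rm max}$ is the uniform Bernoulli measure on $D^{\mathbb{N}}$ and each $U_\epsilon^\pm$ becomes a finite union of mixed-level cylinders (fixing the first $l$ horizontal and the first $k$ vertical symbols of the code). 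A Keller--Liverani perturbation argument, adapted from \cite{FerPol10} to this mixed-cylinder hole, yields $r_{\mu_{\rm max}}(U_\epsilon^\pm)/\mu_{\rm max}(U_\epsilon^\pm)\to d_B(z)$. A density-point argument, valid for $\nu$-a.e.\ $z$ and using the hypothesis on $\underline{{\rm dim}}_{\rm loc}(\pi_*\nu)(0)$ to rule out boundary pathology when $(0,0)\in X$, then gives $\mu_{\rm max}(U_\epsilon^\pm)/\mu_{\rm max}(B_\epsilon(z))\to 1$.

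The split into the two cases is governed by $\tilde U_\epsilon^\pm$. When $z(j)\leq 1$ for every $j$, each fibre of $X$ contains at most one point, so $\tilde U_\epsilon^\pm=\pi(U_\epsilon^\pm)$; since $\#\pi(D)=\#D$ in this case a direct count gives $\tilde\mu_{\rm max}(\tilde U_\epsilon^\pm)\sim\mu_{\rm max}(U_\epsilon^\pm)\sim\mu_{\rm max}(B_\epsilon(z))$, and both escape-rate contributions survive to produce the factor $\eta d_B(z)+(1-\eta)\tilde d_B(z)$. When some row satisfies $z(j)>1$, the fibre of $X$ at $\tilde\mu_{\rm max}$-a.e.\ $y$ is a Cantor set of positive horizontal diameter, so for small $\epsilon$ no complete fibre fits in $B_\epsilon(z)$; hence $\tilde U_\epsilon^\pm=\emptyset$ eventually and only the $\eta d_B(z)$ term contributes. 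Substituting the asymptotics into the formula of Theorem~\ref{thm:boxMark} and taking $\epsilon\to 0$ yields both stated limits, and the sandwich bounds carry over automatically to the lower box dimension. The main technical obstacle is the symbolic escape-rate asymptotic for $U_\epsilon^\pm$: unlike \cite{FerPol10}, where the hole is a cylinder at a single level, the hole here is a union of cylinders at two different levels, so establishing the spectral gap and the first-order eigenvalue expansion for the associated perturbed transfer operator in this asymmetric setting is the key technical point.
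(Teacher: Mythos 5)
Your overall architecture is the same as the paper's: sandwich $B_\epsilon(z)$ between inner and outer Markov holes, apply Theorem \ref{thm:boxMark} to each, convert the escape rates to measures via the Keller--Liverani asymptotics (Proposition \ref{thm:mainpert}), and split into two cases according to whether the projected hole $\tilde U$ is essentially all of $\pi(U)$ or eventually empty. However, you have misplaced the main difficulty. The step you dispatch with ``a density-point argument\ldots gives $\mu_{\rm max}(U_\epsilon^\pm)/\mu_{\rm max}(B_\epsilon(z))\to 1$'' is precisely the step that fails by soft arguments in this non-conformal setting: $\mu_{\rm max}$ need not be doubling, so one cannot conclude that the Markov annulus $U_\epsilon^+\setminus U_\epsilon^-$ (a shell of width $\asymp m^{-k}$ around $\partial B_\epsilon(z)$) carries a negligible proportion of $\mu_{\rm max}(B_\epsilon(z))$; a Lebesgue/density-point argument gives information for a.e.\ point at scale $\to 0$, not a uniform comparison of a ball with its $m^{-k}$-neighbourhood for the relevant coupled choice of $k=k(\epsilon)$. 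The paper devotes Proposition \ref{thm:annulardecaymaxent} to exactly this: it decomposes the shell into the two horizontal strips $A,C$ and the lateral piece $B$, estimates $A,C$ by a Frostman-type bound for $\pi_*\mu_{\rm max}$, and estimates $B$ fibrewise using the explicit conditional measures $\mu_y$, obtaining $\mu_{\rm max}(\text{shell})\le c\,\epsilon^{-s}\gamma^k\mu_{\rm max}(B_\epsilon(z))$ and then choosing $k(\epsilon)$ accordingly. Without an argument of this kind your sandwich does not close. Relatedly, in case (1) your claim $\tilde U_\epsilon^\pm=\pi(U_\epsilon^\pm)$ is not literally true and what one actually needs is $\tilde\mu_{\rm max}(\tilde B_\epsilon)/\tilde\mu_{\rm max}(\pi(B_\epsilon(z)))\to 1$ for $\nu$-a.e.\ $z$; this is the content of Lemma \ref{thm:BorelCantelli1} and requires a Borel--Cantelli argument (using $\underline{{\rm dim}}_{\rm loc}(\pi_*\nu)(0)>0$) to ensure $S^l(\pi(z))$ is not too close to $0$ infinitely often, so that $\pi(B_\epsilon(z))$ sits well inside a single Markov interval. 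The same lemma, not a pointwise ``Cantor fibre of positive diameter'' claim, is what yields $\tilde B_\epsilon=\emptyset$ for small $\epsilon$ in case (2).

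Conversely, the issue you single out as ``the key technical point'' --- a new spectral/perturbation theory for mixed-level holes --- is not actually an obstacle. A rectangle $[in^{-l},(i+1)n^{-l}]\times[jm^{-k},(j+1)m^{-k}]$ corresponds to an $(l,k)$-cylinder, which is a finite union of ordinary length-$k$ cylinders of $D^{\mathbb N}$ contained in a single length-$l$ cylinder with $l\asymp\eta k$; so the family satisfies hypotheses (a)--(d) of Proposition \ref{thm:mainpert} with $\rho_N/l(N)\asymp\eta$, and the existing first-order expansion applies verbatim (the paper sidesteps even this by taking its approximations to be unions of $(k,k)$-cylinders). What does need checking for the approximating family, and what you omit, is hypothesis (d) (compatibility with the periodic orbit of $z$ when $z$ is periodic), which the paper verifies at the end of the proof of Proposition \ref{thm:annulardecaymaxent}.
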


For Hausdorff dimension we obtain a similar result under the additional assumption that the measures $\nu$ and $\pi_*(\nu)$ are non-atomic.  This allows us to disregard the case that $z$ (or $\pi(z)$) is periodic which is a function of the less than optimal bounds derived in Theorem \ref{thm:hausMark}. 

\begin{thm}Let $\mu_{{\rm dim}}$ denote the measure of maximal dimension for $T:X\to X$.  Let $\nu$ denote a $T$-invariant Borel probability measure with ${\rm supp}(\nu)=X$.  Assume that $\pi_*(\nu)$ is non-atomic and that if $(0,0)\in X$ then $\underline{{\rm dim}}_{\rm loc}(\pi_*(\nu))(0)>0$.  
	\begin{enumerate}
		\item If $ z(j) \leq 1$ for all $j$ we have that for $\nu$-almost all $z$
		\begin{equation*}\lim_{\epsilon\to 0}\frac{{\rm dim}_H(X)-{\rm dim}_H(X_{B_\epsilon(z)})}{\mu_{\rm dim}(B_\epsilon(z))} = \frac{1}{\log m}.\end{equation*}
			\item If $z(j) > 1$ for some $j$ then we have that for-$\nu$ almost all $z$
			\begin{equation*} \lim_{\epsilon\to 0} \frac{{\rm dim}_H(X)-{\rm dim}_H(X_{B_\epsilon(z)})}{\mu_{\rm dim}(B_\epsilon(z))} = \frac{1}{\log n}.\end{equation*}
	\end{enumerate}
\label{thm:hausmet}
\end{thm}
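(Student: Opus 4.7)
The plan is to sandwich $B_\epsilon(z)$ between Markov sets at the natural $\epsilon$-scale, apply Theorem~\ref{thm:hausMark} to the approximants, and transfer the resulting estimates back to the ball. Let $l_0=\lceil-\log\epsilon/\log n\rceil$ and $l=\lceil-\log\epsilon/\log m\rceil$, and take $U^+_\epsilon\supseteq B_\epsilon(z)\supseteq U^-_\epsilon$ to be the unions of Bedford--McMullen cells of size $n^{-l_0}\times m^{-l}$ that contain, respectively are contained in, $B_\epsilon(z)$. Monotonicity gives ${\rm dim}_H(X_{U^+_\epsilon})\leq{\rm dim}_H(X_{B_\epsilon(z)})\leq{\rm dim}_H(X_{U^-_\epsilon})$, reducing the problem to the two Markov holes $U^\pm_\epsilon$. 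The non-atomic hypothesis on $\pi_*(\nu)$, together with the full-support and local-dimension assumptions at the origin, excludes a $\nu$-null set of $z$ for which $z$ or $\pi(z)$ is periodic, so the subshift perturbation results quoted in the introduction give $r_{\mu_{{\rm dim}}}(U^\pm_\epsilon)=\mu_{{\rm dim}}(U^\pm_\epsilon)(1+o(1))$, while a Besicovitch-type differentiation argument yields $\mu_{{\rm dim}}(U^\pm_\epsilon)=\mu_{{\rm dim}}(B_\epsilon(z))(1+o(1))$ as $\epsilon\to 0$.

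Applying the upper bound of Theorem~\ref{thm:hausMark} to $U^-_\epsilon$ with trial weights $\underline p$ equal to those of $\mu_{{\rm dim}}$ (so that ${\rm dim}_H(\mu_{\underline p})=s$) and noting that $\tilde U^-_\epsilon=\emptyset$ for small $\epsilon$, we obtain, using $\eta/\log m=1/\log n$,
\[
{\rm dim}_H(X_{B_\epsilon(z)})\leq s-\frac{\mu_{{\rm dim}}(B_\epsilon(z))}{\log n}(1+o(1)),
\]
which closes the $\liminf\geq 1/\log n$ direction in both cases. For the matching $\limsup\leq 1/\log n$ in case~2, apply the $\tilde U=\emptyset$ branch of the lower bound of Theorem~\ref{thm:hausMark} to $U^+_\epsilon$ with the same weights. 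Set $t(y)=\mu_{{\rm dim}}(\pi^{-1}(I_l(y))\cap U^+_\epsilon)/\pi_*(\mu_{{\rm dim}})(I_l(y))$; by disintegration, $\int t\,d\pi_*(\mu_{{\rm dim}})=\mu_{{\rm dim}}(U^+_\epsilon)$. The assumption that $z(j)>1$ for some $j$ forces the conditional distribution of $\mu_{{\rm dim}}$ on a level-$l$ $y$-strip to be uniform over the $\prod_{i=1}^l z(y_i)$ admissible $x$-positions, of which the $\epsilon$-scale hole selects only a fraction of order $\prod z(y_i)^{-\eta}$, tending to $0$ for $\pi_*(\mu_{{\rm dim}})$-a.e.\ $y$. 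The expansion $\log(1-t)=-t+O(t^2)$ then gives $\int\log(1-t)\,d\pi_*(\mu_{{\rm dim}})=-\mu_{{\rm dim}}(U^+_\epsilon)(1+o(1))$, and we conclude
\[
{\rm dim}_H(X_{U^+_\epsilon})\geq s-\frac{\mu_{{\rm dim}}(B_\epsilon(z))}{\log n}(1+o(1)),
\]
closing case~2.

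Case~1 ($z(j)\leq 1$ for all $j$) reduces to the one-dimensional conformal problem on $\pi(X)$. Here $\mu_{{\rm dim}}=\mu_{{\rm max}}$, $\pi|_X$ is a bijection intertwining $T$ and $S$ with inverse $p\colon\pi(X)\to X$, and since $\eta<1$, for any $y\in\pi(X)$ with $|y-\pi(z)|<\epsilon$ the $x$-shift satisfies $|p(y)_1-z_1|=O(n^{-l})=O(\epsilon^{1/\eta})=o(\epsilon)$, so $p(y)\in B_\epsilon(z)$ for small $\epsilon$; hence $\pi(X_{B_\epsilon(z)})=\pi(X)_{B^\pi_\epsilon(\pi(z))}$ and $\mu_{{\rm dim}}(B_\epsilon(z))=\pi_*(\mu_{{\rm dim}})(B^\pi_\epsilon(\pi(z)))$. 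The Lipschitz property of $\pi$ gives ${\rm dim}_H(X_{B_\epsilon(z)})\geq{\rm dim}_H(\pi(X)_{B^\pi_\epsilon(\pi(z))})$, and the conformal dimension-with-holes formula \cite[Theorem~1.2]{FerPol10} applied to $S$ on $\pi(X)$ with $\int\log|dS|\,d\pi_*(\mu_{{\rm dim}})=\log m$ yields
\[
{\rm dim}_H(X_{B_\epsilon(z)})\geq s-\frac{\mu_{{\rm dim}}(B_\epsilon(z))}{\log m}(1+o(1)).
\]
The matching $\liminf\geq 1/\log m$ is immediate from ${\rm dim}_H\leq{\rm dim}_B$ and Theorem~\ref{thm:boxmet}(1), using that $d_B(z)=\tilde d_B(z)=1$ for non-periodic $z$.

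The main obstacle is the case~2 integral estimate: the uniform bound $\sup_y t(y)\to 0$ need not hold, since $y$-strips with $\prod_{i=1}^l z(y_i)=1$ can have $t(y)$ close to $1$. One must therefore control the contribution of this exceptional set to $\int\log(1-t)\,d\pi_*(\mu_{{\rm dim}})$ via a large-deviation estimate for the multiplicative process $\prod z(y_i)$ under $\pi_*(\mu_{{\rm dim}})$, combined with careful book-keeping between the $x$- and $y$-scales $n^{-l_0}$ and $m^{-l}$ at the $\epsilon$-level.
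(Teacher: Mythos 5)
There are genuine gaps. The most serious is your use of the upper bound of Theorem~\ref{thm:hausMark}: that bound reads ${\rm dim}_H(X_U)\leq \sup_{\underline{p}\in\Delta_D}\{\cdots\}$, so evaluating the bracket at the single choice $\underline{p}=p^{\rm dim}$ gives a \emph{lower} bound for the supremum and hence no upper bound for ${\rm dim}_H(X_{B_\epsilon(z)})$ at all. A priori the perturbed variational problem could be maximised at some $\underline{p}\neq p^{\rm dim}$ with a strictly larger value, and one must show that the supremum equals the value at $p^{\rm dim}$ up to $o(\mu_{\rm dim}(U))$. This is exactly what Lemma~\ref{thm:haussup} does, using strict concavity of $\underline{p}\mapsto\dim_H(\mu_{\underline{p}})$ at $p^{\rm dim}$ together with the fact that the perturbation $G_1^N$ is exponentially small and varies slowly on the ball $B_{r_N}(p^{\rm dim})$; without some such argument the inequality ${\rm dim}_H(X_{B_\epsilon(z)})\leq s-\mu_{\rm dim}(B_\epsilon(z))(1+o(1))/\log n$ is unproved. (Your alternative upper bound in case~1 via ${\rm dim}_H\leq\overline{{\rm dim}}_B$ and Theorem~\ref{thm:boxmet} is legitimate, since there ${\rm dim}_B(X)={\rm dim}_H(X)$ and $\mu_{\rm dim}=\mu_{\rm max}$, but it does not help in case~2.)

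A second gap is the approximation step. Taking $U^{\pm}_\epsilon$ to be unions of cells at the natural scale $n^{-l_0}\times m^{-l}\approx\epsilon\times\epsilon$ and asserting $\mu_{\rm dim}(U^{\pm}_\epsilon)=\mu_{\rm dim}(B_\epsilon(z))(1+o(1))$ ``by Besicovitch differentiation'' does not work: $\mu_{\rm dim}$ need not be doubling, and even for doubling measures an annular neighbourhood of $\partial B_\epsilon(z)$ of width comparable to $\epsilon$ can carry a definite fraction of the mass of the ball. This is precisely the difficulty that Propositions~\ref{thm:annulardecaymaxent} and~\ref{thm:annulardecaymaxdim} are built to overcome: the Markov approximants there are taken at a much finer scale $m^{-k(\epsilon)}$, chosen so that the explicit estimate (\ref{eq:keyineq}) on the measure of the boundary cells (via the decomposition into the sets $A$, $B$, $C$ and the product structure of the measure) makes the discrepancy at most $\delta\,\mu(B_\epsilon(z))$. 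One also needs these finer approximants to satisfy hypotheses (\ref{ass1})--(\ref{ass5}) before Proposition~\ref{thm:mainpert} can be invoked. Finally, two further points you leave open are in fact the content of Lemma~\ref{thm:BorelCantelli1}: the claim $\tilde U^{\pm}_\epsilon=\emptyset$ in case~2, and the digit-agreement assertion $|p(y)_1-z_1|=O(n^{-l})$ in case~1 (which fails when $\pi(z)$ is too close to an $m$-adic point at scale $\epsilon$ and requires the Borel--Cantelli argument excluding the sets $X_l$); and you yourself flag the unresolved estimate of $\int\log(1-t)\,d\pi_*(\mu_{\rm dim})$ in the case-2 lower bound. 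As it stands the argument establishes neither inequality in case~2 and only the upper bound in case~1.
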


The remainder of this paper is structured as follows: section \ref{sec:prelim} contains a description of some of the tools that we make use of in proving these results.  Sections \ref{sec:boxMark} and \ref{sec:hausMark} contain a proofs of Theorems \ref{thm:boxMark} and \ref{thm:hausMark} respectively.  In section \ref{sec:approx} we prove two approximation results that allow us to approximate metric balls with Markov squares.  Finally, in section \ref{sec:metric} we combine sections \ref{sec:boxMark},\ref{sec:hausMark} and \ref{sec:approx} to deduce Theorems \ref{thm:boxmet} and \ref{thm:hausmet}.

\section{Preliminaries}
\label{sec:prelim}

\subsection{Perturbations of the transfer operator}

Central to the proof of Theorems \ref{thm:boxMark} and \ref{thm:hausMark} is the estimation of the the number of strings in the symbolic space which avoid a particular collection of forbidden words.  For this we make use of the perturbation theory of transfer operators \cite{ColMarSch97,KelLiv99,KelLiv09}.  

Let $A$ denote an irreducible and aperiodic $l\times l$ matrix of zeroes and ones, i.e. there exists a positive integer $d$ such that $A^{d}>0$.  We define the subshift of finite type (associated with matrix $A$) to be 

\begin{equation}\nonumber\Sigma=\{(x_n)_{n=0}^\infty\,:\,A(x_n,x_{n+1})=1,\text{ for all }n\}.\end{equation}  
	
If we equip the set $\{0,1,\ldots,l-1\}$ with the discrete topology then $\Sigma$ is compact in the corresponding Tychonov product topology.  The shift $\sigma:\Sigma\to\Sigma$ is defined by $\sigma(x)=y$, where $y_n=x_{n+1}$ for all $n$, i.e. the sequence is shifted one place to the left and the first entry deleted.  

For $\theta\in (0,1)$ we define a metric on $\Sigma$ by $d_\theta(x,y)=\theta^{m}$, where $m$ is the least non-negative integer (assuming that such a $m$ exists) with $x_m\neq y_m$, otherwise we set $d_\theta(x,x)=0$.  Equipped with the metric $d_\theta$, the space $(\Sigma,d_\theta)$ is complete, and moreover the topology induced by $d_\theta$ agrees with the previously mentioned Tychonov product topology.  Finally, for $x\in \Sigma$ and a positive integer $n\geq 1$ we define the cylinder of length $n$ centred on $x$ to be the set $[x]_n=[x_0,x_1,\ldots,x_{n-1}]=\{y \in\Sigma\,:\,y_i=x_i\text{ for }i=0,1,\ldots,n-1\}$.

Fix a $d_\theta$-Lipschitz continuous function $\phi:\Sigma\to\mathbb{R}$, and recall that we let $\mu$ denote its equilibrium state, i.e.,

\begin{equation}\nonumber P(\phi):=\sup\left\{h_{\nu}+\int \phi d\nu\,:\,\sigma_*(\nu)=\nu,\,\nu(\Sigma)=1\right\}=h_{\mu}+\int \phi d\mu.\end{equation}

We define the transfer operator $\mathcal{L}:C(\Sigma)\to C(\Sigma)$ acting on the space of continuous functions equipped with the supremum norm $\|\cdot\|_\infty$.

Writing $i=(i_0,i_1,\ldots,i_{k-1})$ for an allowed string of length $k$ then we may write $(\mathcal{L}^k w)(x)=\sum_{|i|=k}e^{\phi^k(ix)}w(ix)$ where the sum is over the strings for which the concatenation $ix$ is allowed, i.e. we require $ix\in\Sigma$.

Given a set $U\subset \Sigma$ consisting of a finite collection of cylinder sets we wish to study how the number of words of length $k$ that do not intersect grows $U$ as $k$ gets large.  Clearly this information is encoded by the escape rate of the measure of maximal entropy through $U$.  We state the following consequence of a result of Collet, Martinez and Schmidt \cite{ColMarSch97}.  

\begin{prop}Let $\mu$ denote the equilibrium state associated with a $d_\theta$-Lipschitz potential.  Denote by $\Sigma_U=\Sigma\setminus\bigcup_{k\geq 0}\sigma^{-k}(U)$ and suppose that $\sigma:\Sigma_U\to \Sigma_U$ is topologically mixing.  Then the following limit exists 
	\begin{equation*}r_\mu(U)=-\lim_{k\to\infty}\frac{1}{k}\log\mu\{x\in\Sigma\,:\,\sigma^i(x)\not\in U\text{ for }i<k\}\end{equation*} \label{thm:colmarshm}\end{prop}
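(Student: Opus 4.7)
The plan is to represent the survival probability as a spectral quantity for a perturbed transfer operator and then apply the Perron--Frobenius type theorem for such operators proved in \cite{ColMarSch97}. First I would reduce to a convenient coding: since $U$ is a finite union of cylinders, there is some $n_0 \geq 1$ so that $U$ is a union of $n_0$-cylinders, and after passing to the $n_0$-th higher block shift I may assume $U = \bigcup_{a \in F}[a]$ is a union of $1$-cylinders for some forbidden symbol set $F$. Write $V = \Sigma \setminus U$ and let $h$, $\nu$ be the positive Lipschitz eigenfunction and eigenmeasure satisfying $\mathcal{L} h = e^{P(\phi)} h$ and $\mathcal{L}^* \nu = e^{P(\phi)} \nu$, normalised so that $\mu = h \nu$.

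Next I would introduce the perturbed operator $\tilde{\mathcal{L}} w = \mathcal{L}(1_V \cdot w)$ on the Banach space of $d_\theta$-Lipschitz functions. A short induction on $k$ using the identity $\mathcal{L}(u \cdot v \circ \sigma) = v \cdot \mathcal{L} u$ gives
$$\tilde{\mathcal{L}}^k h \;=\; \mathcal{L}^k \Bigl( h \cdot \prod_{i=0}^{k-1} 1_V \circ \sigma^i \Bigr),$$
and integrating against $\nu$, together with the eigenrelation for $\mathcal{L}^*$, produces the key identity
$$\mu\{x \in \Sigma : \sigma^i(x) \notin U \text{ for } 0 \leq i < k\} \;=\; e^{-kP(\phi)} \int \tilde{\mathcal{L}}^k h \, d\nu.$$

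To conclude, I would invoke \cite{ColMarSch97}: the topological mixing of $\sigma$ on $\Sigma_U$ is exactly what is needed to guarantee that $\tilde{\mathcal{L}}$ is quasi-compact on the Lipschitz Banach space with a simple dominant positive eigenvalue $\lambda$, a strictly positive Lipschitz eigenfunction $\tilde h$, and a spectral gap. Hence $\lambda^{-k} \tilde{\mathcal{L}}^k h$ converges uniformly to a positive multiple of $\tilde h$, yielding
$$\frac{1}{k} \log \int \tilde{\mathcal{L}}^k h \, d\nu \longrightarrow \log \lambda,$$
and combining with the identity above shows that the $\limsup$ in the definition of $r_\mu(U)$ is in fact a limit, equal to $P(\phi) - \log \lambda$. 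The main point requiring care is the verification of the hypotheses of \cite{ColMarSch97}, but these are standard: the unperturbed $\mathcal{L}$ has the usual Ruelle--Perron--Frobenius spectral gap since $\phi$ is Lipschitz, and the assumed mixing of $\sigma|_{\Sigma_U}$ is exactly what upgrades non-negativity of $\tilde{\mathcal{L}}^k$ to a genuine Perron eigenvector.
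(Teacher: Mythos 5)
Your argument is correct and is essentially the route the paper itself intends: the paper offers no proof of this proposition, stating it as a direct consequence of Collet, Martinez and Schmidt \cite{ColMarSch97}, and your identity $\mu\{x:\sigma^i(x)\notin U,\ i<k\}=e^{-kP(\phi)}\int\tilde{\mathcal{L}}^k h\,d\nu$ together with quasi-compactness and a simple dominant eigenvalue for the perturbed operator is precisely the mechanism of that reference (and the same spectral decomposition the paper exploits in the gluing lemma that follows). The only point you gloss over is that quasi-compactness alone is not enough -- one must also know that the spectral radius of $\tilde{\mathcal{L}}$ strictly exceeds its essential spectral radius so that a Perron eigenvalue actually exists, and this is where the topological mixing of $\sigma|_{\Sigma_U}$ and the Markov structure of $U$ enter in \cite{ColMarSch97}; since you explicitly defer that verification to the reference, as the paper does, this is not a gap.
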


Due to the non-conformality of the system $T:X\to X$ for the purposes of constructing a good cover to compute the dimensions of $X$ we are required to consider the dynamics of factor system $S:\pi(X)\to\pi(X)$.  If $\mathcal{U}$ is a cover of $X$ by approximate squares of side length $\approx m^{-k}$ then Proposition \ref{thm:colmarshm} may be used to yield information about the number of elements from this cover which do not hit some forbidden region $U$ (e.g. metric ball or finite union of Markov holes) for the first $[\eta k]$ iterates under $T$.  Likewise, by considering the dynamics of the factor $S:\pi(X)\to\pi(X)$ we may estimate the proportion of this cover that do not intersect $U$ for the iterates $[\eta k]< i \leq k$.  These two events (not hitting $U$ at times $\leq [\eta k]$ and not hitting $U$ at times $>[\eta k]$) are not independent and the following lemma provides a means of `gluing' these two estimates together and so estimating the number of approximate squares lost by forbidding intersection with $U$ under iteration. 

In order to this we introduce the following perturbation of the transfer operator $\mathcal{L}_{U}:C(\Sigma)\to C(\Sigma)$ given by $(\mathcal{L}_U w)(x)=(\mathcal{L} \chi_{U^c} w)(x)$.  

\begin{lemma}\label{thm:glue}Let $\mu$ denote the equilibrium state associated with a Lipshitz potential.  Let $U\subseteq\Sigma$ consist of a finite union of cylinders with the property that $\sigma:\Sigma_U\to\Sigma_U$ is topologically mixing.  There exists a constant $c=c(U)$ such that for all $x\in\Sigma_{U}$ and $l\in\mathbb{N}$ we have 
\begin{equation*}\mu\{y\in [x]_{l}\,:\,\sigma^i(y)\not\in U\text{ for }0\leq i <k\} \geq e^{-k r_\mu(U)}\left(c+o(1)\right).\end{equation*}\end{lemma}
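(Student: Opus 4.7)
The plan is to reduce the statement to a spectral fact about the perturbed transfer operator $\mathcal{L}_U$. As a preliminary step I would normalize the potential: by Ruelle--Perron--Frobenius applied to $\mathcal{L}$ there exist a strictly positive Lipschitz eigenfunction $h$ and an eigenmeasure $\nu$ with $\mathcal{L}h = e^{P(\phi)}h$, $\mathcal{L}^*\nu = e^{P(\phi)}\nu$, $\int h\,d\nu = 1$ and $\mu = h\nu$. Replacing $\phi$ by the cohomologous potential $\tilde\phi = \phi + \log h - \log h\circ\sigma - P(\phi)$ affects neither $\mu$ nor $r_\mu(U)$, and the new transfer operator (still called $\mathcal{L}$) now satisfies $\mathcal{L}1 = 1$ and $\mathcal{L}^*\mu = \mu$.

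Next I would express the quantity of interest through iterates of $\mathcal{L}_U$. Writing $B_k = \{y\in[x]_l : \sigma^i y\notin U,\ 0\leq i<k\}$, a direct induction gives
\begin{equation*}
\mathcal{L}_U^k w(y) \;=\; \sum_{z:\sigma^k z = y} e^{\phi^k(z)}\prod_{i=0}^{k-1}\chi_{U^c}(\sigma^i z)\,w(z) \;=\; \mathcal{L}^k\!\left(w\prod_{i=0}^{k-1}\chi_{U^c}\circ\sigma^i\right)\!(y).
\end{equation*}
Integrating against $\mu$ and using $\mathcal{L}^*\mu = \mu$ to collapse the $\mathcal{L}^k$ yields the key identity $\mu(B_k) = \int \mathcal{L}_U^k \chi_{[x]_l}\,d\mu$. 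Note that $\chi_{[x]_l}$ is $d_\theta$-Lipschitz (with Lipschitz constant $\theta^{-(l-1)}$), so it lies in the Banach space on which $\mathcal{L}_U$ is analysed.

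The final step is to exploit the spectral decomposition of $\mathcal{L}_U$ underlying Proposition \ref{thm:colmarshm}. Because $\sigma\colon\Sigma_U\to\Sigma_U$ is topologically mixing, the Collet--Martinez--Schmidt/Keller--Liverani theory yields that $\mathcal{L}_U$, acting on $d_\theta$-Lipschitz functions, has a simple leading eigenvalue $\lambda_U = e^{-r_\mu(U)}$ with a strictly positive Lipschitz eigenfunction $h_U$ and a probability eigenmeasure $\nu_U$ supported on $\Sigma_U$, normalized by $\int h_U\,d\nu_U = 1$, while the remainder of the spectrum lies in a disc of radius $\gamma<\lambda_U$. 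Hence
\begin{equation*}
\mathcal{L}_U^k \chi_{[x]_l}(y) = \lambda_U^k h_U(y)\,\nu_U([x]_l) + R_k(y), \qquad \|R_k\|_\infty = O(\gamma^k).
\end{equation*}
Integrating against $\mu$ gives $\mu(B_k) = e^{-kr_\mu(U)}\bigl(\nu_U([x]_l)\int h_U\,d\mu + o(1)\bigr)$, so the lemma holds with $c = \nu_U([x]_l)\int h_U\,d\mu$. Positivity of $c$ follows from $h_U>0$ everywhere and from the full support of $\nu_U$ in $\Sigma_U$, both consequences of Perron--Frobenius applied to $\mathcal{L}_U$ on a topologically mixing system.

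The main obstacle is precisely the spectral gap: upgrading the mere existence of the escape-rate limit to a leading-eigenvalue-plus-exponentially-small-remainder decomposition requires the correct quasicompact setup for $\mathcal{L}_U$ on a Lipschitz (or bounded-variation) Banach space. This is where the topological mixing of the surviving dynamics enters in an essential way; without it neither the simplicity of $\lambda_U$, the strict positivity of $h_U$ on $\Sigma_U$, nor the full support of $\nu_U$ (and hence the positivity of $c$) would be available.
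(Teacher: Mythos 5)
Your proposal is correct and follows essentially the same route as the paper: rewrite $\mu\{y\in[x]_l:\sigma^i y\notin U,\ 0\leq i<k\}$ as $\int\mathcal{L}_U^k\chi_{[x]_l}\,d\mu$ and then invoke the Collet--Martinez--Schmidt spectral decomposition of the perturbed operator, with leading eigenvalue $e^{-r_\mu(U)}$ and positive eigendata supplying the constant $c$. The only (harmless) cosmetic differences are your explicit normalization of the potential and your writing of the leading term as $\nu_U([x]_l)\int h_U\,d\mu$ rather than the paper's $\mu_U([x]_l)\int g_U\,d\mu_U$.
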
  

	\begin{proof}  Fix $x\in \Sigma_U$ then we observe that 
		\begin{equation*}\begin{split}  \mu\{y\in [x]_{l}\,:\,\sigma^i(y)\not\in U\text{ for }0\leq i <k\} & = \int_{[x]_{l}}\prod_{i=0}^{k-1}\chi_{U^c}(\sigma^i(y) ) d\mu(y) \\
			& = \int \left(\mathcal{L}^k\prod_{i=0}^{k-1}\chi_{U^c}\circ\sigma^i \chi_{[x]_l}\right)(y) d\mu(y) \\
			& = \int (\mathcal{L}^k_U \chi_{[x]_{l}})(y) d\mu(y) \\
			& \leq e^{-kr_\mu(U)}\left( \underbrace{\mu_U([x]_{l})\int g_U d\mu_U}_{:=A} \right. \\
			 &+\left. \underbrace{\left \| e^{k r_\mu(U)}(\mathcal{L}_U^k \chi_{[x]_{l}})-\mu_U([x]_{l})\int g_U d\mu_U \right\|_\infty}_{:=B} \right).\end{split}\end{equation*}

In \cite{ColMarSch97} it is shown that $B\to 0$ as $k\to \infty$.  Further, the estimates $h_U>0$ and $\mu_U([x]_{l})>0$ for any $x\in\Sigma_U$ are proven.  Taking the infimum over $x\in\Sigma_U$ in $A$ completes the proof.
\end{proof} 

\subsection{Shrinking holes}

The proofs of Theorems \ref{thm:boxmet} and \ref{thm:hausmet} make use of estimates on how the quantity $r_\mu(U)$ varies as $U$ shrinks to a point.  In order to do this we employ the framework of Keller and Liverani \cite{KelLiv99, KelLiv09} along with the estimates found in \cite{FerPol10} that show that this framework applies in the setting of subshifts of finite type.

The conditions that we impose on our sequence of holes $U_N\subset \Sigma$ are: 

	\begin{enumerate}[(a)]
		\item $\{U_N\}_N$ are nested with $\cap_{N\geq 1} U_N = \{z\}$.\label{ass1} 	
		\item Each $U_N$ consists of a finite union of cylinder sets, with each cylinder having length $l(N)$. \label{ass2}
		\item There exists a sequence $\{\rho_N\}_N\subset \mathbb{N}$, constants $\kappa>0$, $q\in\N$ and points $z^{(1)},z^{(2)},\ldots,z^{(q)}$ such that $\kappa<\rho_N/l(N)\leq 1$ and  $U_N\subset \cup_i [z^{(i)}]_{\rho_N}$ for all $N\geq 1$. \label{ass4}
		\item If $z$ is periodic with prime period $p$ then $\sigma^{-p}(U_N)\cap [z_0 z_1\cdots z_{p-1}]\subseteq U_N$ for large enough $N$. \label{ass5}
	\end{enumerate}

Under these assumptions one may conclude the following.

\begin{prop}Suppose that $\sigma:\Sigma\to\Sigma$ is topologically mixing.  Let $\phi$ be $d_\theta$-Lipschitz and denote by  $\mu$ the associated equilibrium state.  We suppose further that the family $\{U_N\}_{N}$ satisfies assumptions (\ref{ass1})-(\ref{ass5}).  Then 
\begin{equation*}\lim_{N\to\infty}\frac{r_\mu(U_N)}{\mu(U_N)}=	\begin{cases} 1 & \text{ if }z\text{ is non-periodic} \\
		1-e^{\phi^p(z)-pP(\phi)} & \text{if }z\text{ has prime period }p\end{cases}\end{equation*}\label{thm:mainpert} where $\phi^p(z)=\phi(z)+\phi(\sigma(z))+\cdots+\phi(\sigma^{p-1}(z))$.
	\end{prop}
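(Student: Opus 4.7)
The plan is to identify the escape rate with the logarithmic gap between the leading eigenvalue of the transfer operator and that of its closed perturbation, and then apply the Keller--Liverani perturbation theorem in the Lipschitz setting as implemented in \cite{FerPol10}. Writing $\mathcal{L}_{U_N}w=\mathcal{L}(\chi_{U_N^c}w)$ and pairing its iterates against the conformal measure $\nu$ (normalised so that $d\mu=h\,d\nu$ with $\mathcal{L}h=e^{P(\phi)}h$), one has
\begin{equation*}
\mu\{y:\sigma^i(y)\notin U_N,\ 0\le i<k\}=e^{-kP(\phi)}\int \mathcal{L}_{U_N}^k h\,d\nu,
\end{equation*}
so Proposition~\ref{thm:colmarshm} gives $r_\mu(U_N)=P(\phi)-\log\lambda_N$, where $\lambda_N$ denotes the leading eigenvalue of $\mathcal{L}_{U_N}$ on the Banach space of $d_\theta$-Lipschitz functions.

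Next I would verify the two hypotheses of \cite{KelLiv99} for the family $\{\mathcal{L}_{U_N}\}$: a uniform Lasota--Yorke inequality, and operator-norm convergence $\|\mathcal{L}-\mathcal{L}_{U_N}\|\to 0$ measured from Lipschitz into $L^1(\mu)$. Both are carried out in the subshift setting in \cite{FerPol10}, with assumptions (\ref{ass2}) and (\ref{ass4}) being precisely what is needed to control the Lipschitz norm of $\chi_{U_N^c}$ uniformly in $N$. The perturbation theorem then delivers the first-order expansion
\begin{equation*}
e^{P(\phi)}-\lambda_N=\mu(U_N)\,e^{P(\phi)}\bigl(1-q_N\bigr)+o(\mu(U_N)),
\end{equation*}
where $q_N=\mu(U_N)^{-1}\sum_{k\ge 1}\mu\bigl(U_N\cap\sigma^{-k}U_N\cap\bigcap_{j=1}^{k-1}\sigma^{-j}U_N^c\bigr)$ encodes the probability that a point in $U_N$ returns to $U_N$.

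The remaining task is to compute $\lim_N q_N$. If $z$ is non-periodic, then since $\sigma^k(z)\neq z$ for every $k\ge 1$, assumptions (\ref{ass1}) and (\ref{ass4}) ensure that for each fixed $k$ the sets $U_N$ and $\sigma^{-k}U_N$ are disjoint once $N$ is large; combined with exponential decay of correlations against the Gibbs measure this forces $q_N\to 0$. If $z$ has prime period $p$, only terms at multiples of $p$ contribute, and (\ref{ass5}) inductively forces $\sigma^{-jp}(U_N)\cap[z_0\cdots z_{jp-1}]\subseteq U_N$ for each $j\ge 1$; the Gibbs property of $\mu$ then evaluates each such contribution as a geometric factor $e^{\phi^p(z)-pP(\phi)}$ of the preceding one, and summing the resulting series gives $q_N\to e^{\phi^p(z)-pP(\phi)}$. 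Finally $-\log(1-x)=x+O(x^2)$ converts the spectral expansion into the stated asymptotic for $r_\mu(U_N)/\mu(U_N)$. The main technical obstacle is the periodic case, where one must control the nested geometric tower of returns uniformly in $N$; this is precisely why assumption (\ref{ass5}) is built into the hypothesis rather than deduced.
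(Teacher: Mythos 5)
Your proposal is correct and follows essentially the same route the paper intends: the paper states Proposition \ref{thm:mainpert} without proof, deferring entirely to the Keller--Liverani perturbation framework \cite{KelLiv99,KelLiv09} as implemented for subshifts in \cite{FerPol10}, and your outline (escape rate as spectral gap of $\mathcal{L}_{U_N}$, verification of the uniform Lasota--Yorke and triple-norm hypotheses via assumptions (\ref{ass2})--(\ref{ass4}), and evaluation of the first-return factor using (\ref{ass5}) and the Gibbs property in the periodic case) is precisely the argument being invoked. The only cosmetic point is that in the periodic case only the first-return term at time $p$ survives in the limit rather than a genuine geometric series, but this does not affect the conclusion.
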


\subsection{A symbolic model for the set $X$}

The map $T:X\to X$ is semi-conjugated to a full shift, let 
\begin{equation*}\begin{split}\Sigma=\{(\omega,\tau)=((\omega_i)_{i=0}^{\infty},(\tau_i)_{i=0}^{\infty})&\in \{0,1,\ldots,n-1\}^\mathbb{N}\times\{0,1,\ldots,m-1\}^\mathbb{N}  \\
	 &\,\,\,\,\,\,\,\,\,\,\,\,\,\,\,\,\,\,\,\,\,\,\,\,\,\,\,\,\,\,\,\,\,:\,(\omega_i,\tau_i)\in D\text{ for }i=0,1,\ldots\}.\end{split}\end{equation*}

With the shift map $\sigma:\Sigma\to\Sigma$ acting as the usual shift on each of the entries, i.e. $\sigma(\omega,\tau)=\sigma((\omega_i)_{i=0}^{\infty},(\tau_i)_{i=0}^{\infty})=((\omega_{i+1})_{i=0}^{\infty},(\tau_{i+1})_{i=0}^{\infty})$.  Denote by  $\Pi:\Sigma\to X$ the factor map defined by
	\begin{equation*} \Pi(\omega,\tau)=\lim_{k\to\infty} F_{(\omega_0,\tau_0)}F_{(\omega_1,\tau_1)}\cdots F_{(\omega_k,\tau_k)}(0,0).\end{equation*}  
		
For $l,k\in\mathbb{N}$ and $(\omega,\tau)\in\Sigma$ we define the $(l,k)$ cylinder set centred on $(\omega,\tau)$ to be 
		\begin{equation*} [(\omega,\tau)]_{(l,k)}=\{(\omega^\prime,\tau^\prime)\in\Sigma\,:\,\omega^{\prime }_i=\omega_i\text{ for }0\leq i < l\text{ and }\tau^{\prime}_j=\tau_j\text{ for }0\leq j < k\}.\end{equation*}	
			
We denote by $\mathcal{C}_k$ the set of all $(k,k)$-cylinders, that is

\begin{equation*}\mathcal{C}_k=\{[(\omega,\tau)]_{(k,k)}\,:\,(\omega,\tau)\in\Sigma\}.\end{equation*}	
	
The non-conformality of the map $T$ dictates that images of the cylinders $\mathcal{C}_k$ do not form an optimal cover of the set $X$.  We therefore introduce approximate squares.  Let $\eta=\frac{\log m}{\log n}$ then we set 
\begin{equation*}\mathcal{R}_k=\{[(\omega,\tau)]_{([\eta k],k)}\,:\,(\omega,\tau)\in\Sigma\}.\end{equation*}  The image $\Pi(R)$ of $R\in\mathcal{R}_k$ is a rectangle of side length $n^{-\eta k}\times m^{-k}\approx m^{-k}\times m^{-k}$ intersected with $X$ and for the purposes of studying dimension these are the correct objects to study.

It is therefore necessary to consider the dynamics of a factor of $\Sigma$.  Let $\tilde{\Sigma}=\pi(D)^\mathbb{N}$ and denote by $\tilde{\sigma}:\tilde{\Sigma}\to\tilde{\Sigma}$ the associated shift.  We let $\tilde{\pi}:\Sigma\to\tilde{\Sigma}$ denote the map $\tilde{\pi}(\omega,\tau)=\tau$.

For $\tau\in \tilde{\Sigma}$ we define the cylinder set of length $k$ around $\tau$ to be the set
\begin{equation*}[\tau]_k=\{\tau^\prime=(\tau^\prime_i)_{i=0}^{\infty}\,:\,\tau_i=\tau^\prime_i\text{ for all }i<k\}\end{equation*} and we denote the collection of such cylinders by
\begin{equation*}\tilde{\mathcal{C}}_k=\{[\tau]_k\,:\,\tau\in\tilde{\Sigma}\}.\end{equation*}

\section{Proof of Theorem \ref{thm:boxMark}}
\label{sec:boxMark}

	In this section we prove Theorem \ref{thm:boxMark}.  Suppose that $U\subset X$ is a Markov set and that $T:X_U\to X_U$ is topologically mixing.  The measure of maximal entropy $\mu_{{\rm max}}$ for $\sigma:\Sigma\to\Sigma$ corresponds to the equilibrium state associated to the potential $\phi=-\log\# D$, i.e. the evenly weighted Bernoulli measure. Similarly, the measure of maximal entropy $\tilde{\mu}_{{\rm max}}$ for $\tilde{\sigma}:\tilde{\Sigma}\to\tilde{\Sigma}$ corresponds to the equilibrium state associated to $\tilde{\phi}=-\log\#\pi(D)$.  Let $V\subset \Sigma$ denote a finite union of cylinders of length say $l$ for which $\Pi(V)=U$.
	
A easily verified property of the measures $\mu_{{\rm max}}$ and $\tilde{\mu}_{{\rm max}}$ is that if $A\subset\Sigma$ $B\subset\tilde{\Sigma}$ consist of a finite union of cylinder sets of length, say $l(A),l(B)$ then
	\begin{equation}\label{eq:countingbasic}\begin{split}\mu_{{\rm max}}(A)& =(\# D)^{-l(A)}\#\{C\in \mathcal{C}_{l(A)}\,:\,C\subset A\}\\
	\tilde{\mu}_{{\rm max}}(B)& =(\# \tilde{D})^{-l(B)}\#\{C\in \tilde{\mathcal{C}}_{l(B)}\,:\,C\subset B\}	.\end{split}\end{equation}  

Unpacking the definition of the escape rate of $\mu_{{\rm max}}$ through $U$ we see that for fixed $N$ and $\epsilon>0$ there exists $k_0$ such that 

	\begin{equation}\label{eq:countingstuff} e^{-k\epsilon} \leq \frac{\# \{C\in C_{k+l} \,:\, \sigma^i(C)\cap V=\emptyset \text{ for }0\leq i <k\}}{(\# D)^{k+l}e^{-k r_{\mu_{{\rm max}}}(V)} } \leq e^{k\epsilon} \end{equation} for all $k\geq k_0$.

	Due to the fact that the correct cover of the set $X_{U}$ comes not from images of cylinder sets $\mathcal{C}_k$ but from approximate squares $\mathcal{R}_k$ we are required to consider the dynamics of the system $\tilde{\sigma}:\tilde{\pi}\left(\Sigma\setminus\bigcup_{k=0}^{\infty}\sigma^{-k}(V)\right)\to \tilde{\pi}\left(\Sigma\setminus\bigcup_{k=0}^{\infty}\sigma^{-k}(V)\right)$, i.e. the projection of the (symbolic) survivor set.

	We observe that by setting $\tilde{V}=\{A\in\tilde{\mathcal{C}}_{l}\,:\,\tilde{\pi}^{-1}(A)\setminus V=\emptyset\}$ we have that 

	\begin{equation*}\tilde{\pi}\left(\Sigma\setminus\bigcup_{k=0}^{\infty}\sigma^{-k}(V)\right)=\tilde{\Sigma}\setminus\bigcup_{k=0}^{\infty}\tilde{\sigma}^{-k}(\tilde{V}),\end{equation*}i.e. the projection of the survivor set is the survivor set associated with the hole $\tilde{U}$.  Trivially we have that $\emptyset \subseteq \tilde{V} \subseteq \tilde{\pi}(V)$ and we remark that both of these extremes may be realised.
		
We now prove Theorem \ref{thm:boxMark}.

	\begin{proof}Fix $k$ and consider the following cover of $X_{U}$ 

	\begin{equation*}\begin{split}D_{U,k}=\Pi&\{ R\in\mathcal{R}_k \,:\,\sigma^i(R)\cap V=\emptyset \text{ for }0\leq i< [\eta k]-l \\ & \,\,\,\,\,\,\,\,\,\,\,\,\,\,\,\,\,\, \text{ and }\tilde{\sigma}^i(\tilde{\pi}(R))\not\in \tilde{V}\text{ for }[\eta k] \leq i < k-l\}.\end{split}\end{equation*}

	Clearly, the condition $\sigma^i(R)\cap V= \emptyset$ for $0\leq i <[\eta k]-l$ imposes conditions on the first $[\eta k]$ symbols, and to count the number of $R\in\mathcal{R}_k$ satisfying this we use the escape rate of $\mu_{{\rm max}}$ through $V$.  While the later condition conditions on the later $k-[\eta k]$ symbols and we use the escape rate of $\tilde{\mu}_{{\rm max}}$ through $\tilde{V}$. 

	From the definition of the escape rate, in conjunction with (\ref{eq:countingbasic}) we see that for any $\epsilon>0$ there exists $k_0$ such that

	\begin{equation*}\# D_{V,k} \leq e^{(k-2 l(V))\epsilon} e^{-([\eta k]-l(V)) r_{\mu_{{\rm max}}}(V)} \left(\# D\right)^{[\eta k]} e^{-(k-[\eta k]-l(V)) r_{\tilde{\mu}_{{\rm max}}}(\tilde{V})} \left(\# \tilde{D}\right)^{k-[\eta k]} \end{equation*}	for all $k\geq k_0$.  Thus, denoting by $N_k(X_{U})$ the minimum number of boxes of side length $m^{-k}$ required to cover $X_{U}$ we have that 

	\begin{equation*}\begin{split}\overline{{\rm dim}}_B(X_{U}) & =  \limsup_{k\to\infty} \frac{\log N_k(X_{U})}{\log m^k} \\
		& \leq \limsup_{k\to\infty} \frac{\log \# D_{U,k}}{\log m^k} \\
		& \leq {\rm dim}_B(X) - \frac{1}{\log m}\left(\eta r_{\mu_{{\rm max}}}(V) + (1-\eta)r_{\tilde{\mu}_{{\rm max}}}(\tilde{V})\right) +\frac{\epsilon}{\log m}.\end{split} \end{equation*} 

	Since the above holds for all $\epsilon>0$ we deduce that 

	\begin{equation}\label{eq:boxupperbound} \overline{{\rm dim}}_B(X_{U}) \leq {\rm dim}_B(X) - \frac{1}{\log m}\left(\eta r_{\mu_{{\rm max}}}(V) + (1-\eta)r_{\tilde{\mu}_{{\rm max}}}(\tilde{V})\right).\end{equation}

	We now estimate the lower box dimension.  For fixed $k$ and $U$ let
	\begin{equation*}\tilde{D}_{U,k}=\Pi\{ R\in\mathcal{R}_k \,:\,\text{ there exists }(\omega,\tau)\in R \text{ such that }\sigma^i(\omega,\tau)\not\in V\text{ for all }i\geq 0\}.\end{equation*}	

	We shall estimate the cardinality of $\tilde{D}_{U,k}$.  Clearly, for each $C\in\mathcal{C}_{[\eta k]}$ satisfying $\sigma^i(C)\cap V=\emptyset$ for $0\leq i<[\eta k]-l$ has the property that $\Pi(C)\cap X_U\neq\emptyset$.  Furthermore, for each of these $C$ we may apply Lemma \ref{thm:glue} to the cylinder $\tilde{\sigma}^{[\eta k ]-l}\tilde{\pi}(C) \in \tilde{C}_{l}$ to see that there exists a constant $c>0$ such that
	\begin{equation*}\tilde{\mu}_{{\rm max}}\{y\in \tilde{\sigma}^{[\eta k ]-l(V)}\pi(C)\,:\,\tilde{\sigma}^i(y)\not\in \tilde{V}\text{ for }0\leq i < k-[\eta k]\} \geq e^{-(k-[\eta k])r_{\tilde{\mu}_{{\rm max}}}(\tilde{V})}(c+o(1)).\end{equation*}

Combining these estimates we deduce that each $C\in\mathcal{C}_{[\eta k]}$ such that $\sigma^i(C)\cap V=\emptyset$ for $0\leq i < [\eta k]$ contains at least $e^{-(k-[\eta k])r_{\tilde{\mu}_{{\rm max}}(\tilde{V})}}(c+o(1)) \pi(D)^{k-[\eta k]}$ approximate squares of side length $m^{-k}$ which each hit $\tilde{D}_{U,k}$ and so for fixed $\epsilon>0$ there exists $k_0$ such that

\begin{equation*}\# \tilde{D}_{U,k} \geq e^{-\epsilon k } e^{-[\eta k]r_{\mu_{{\rm max}}}(V)} (\# D)^{[\eta k]} e^{-(k-[\eta k])r_{\tilde{\mu}_{{\rm max}}}(\tilde{V})}(c+o(1)) \pi(D)^{k-[\eta k]}.\end{equation*}

Any approximate square in $\tilde{D}_{U,k}$ necessarily contains a point $(\omega,\tau)$ for which $\sigma^i(\omega,\tau)\not\in V$ for all $i\geq 0$.  Thus, if $\mathcal{V}_{k}$ is an optimal cover of $X_U$ by boxes of side length $m^{-k}$ then each $\Pi(R)\in \tilde{D}_{U,k}$ necessarily intersects some element of $\mathcal{V}_{k}$.  On the other hand each $V\in\mathcal{V}_k$ intersects at most $9$ elements of $\tilde{D}_{U,k}$ and so 
	\begin{equation*}\# \tilde{D}_{U,k} \leq 9 \# \mathcal{V}_k=9 N_k(X_U).\end{equation*} 
	
	  Thus
	\begin{equation*} \underline{{\rm dim}}_B(X_{U}) = \liminf_{k\to\infty} \frac{\log N_k(X_U)}{\log m^{-k}} \geq  {\rm dim}_B(X) - \frac{1}{\log m}\left(\eta r_{\mu_{{\rm max}}}(V) + (1-\eta)r_{\tilde{\mu}_{{\rm max}}}(\tilde{V})\right) -\frac{\epsilon}{\log m}.\end{equation*}
	Letting $\epsilon\to 0$ and combining with equation (\ref{eq:boxupperbound}) we see that 
	\begin{equation}\label{eq:boxdim}{\rm dim}_B(X_U)={\rm dim}_B(X) - \frac{1}{\log m}\left(\eta r_{\mu_{{\rm max}}}(V) + (1-\eta)r_{\tilde{\mu}_{{\rm max}}}(\tilde{V})\right).\end{equation}  Finally, we observe that the map $\Pi:\Sigma\to X$ is one to one almost everywhere for the measure $\mu_{\max}$ and that this measure is projected to the measure of maximal entropy for $T:X\to X$.  It follows that the corresponding escape rates coincide which completes the proof.  

	\end{proof}

\section{Proof of Theorem \ref{thm:hausMark}}
\label{sec:hausMark}

\subsection{Upper bound}

The purpose of this section is to prove the following upper bound.

\begin{equation*}{\rm dim}_H(X_U)\leq \sup_{\underline{p}\in\Delta_D}\left\{{\rm dim}_H(\mu_{\underline{p}})-\frac{\eta r_{\mu_{\underline{p}}}(U)+(1-\eta)r_{\pi_*(\underline{p})}(\tilde{U})}{\log m}\right\}.\end{equation*}

To prove this we modify McMullen's original argument, this is the following two lemmas.

For $\tau\in\pi(D)^\mathbb{N}$ and $k\geq 1$ we let 

\begin{equation*}\underline{q}_k(\tau)=(q_{k,j}(\tau))_{j=0}^{m-1}=\left(k^{-1}\sum_{i=0}^{m-1}\chi_{[j]}(\tilde{\sigma}^i(\tau))\right)_{j=0}^{m-1}\end{equation*} and $\nu_{\tau,k}$ the associated Bernoulli measure.  We define a sequence of function $\Psi_k:\pi(D)^\mathbb{N}\to\mathbb{R}$ by
	
\begin{equation*}\Psi_k(\tau)=h_{\nu_{\tau,k}}(\tilde{\sigma})-r_{\nu_{\tau,k}}(\tilde{V})\end{equation*} where $\tilde{V}\subset\pi(D)^\mathbb{N}$ denotes the collection of cylinder sets associated with $\tilde{U}$,  
	
The analogue of McMullen's condition \cite{McM84}[Lemma 4] that we require is the following.

\begin{lemma}For any $\tau\in\pi(D)^\mathbb{N}$ we have
\begin{equation*}\liminf_{k\to\infty}\left( \Psi_{k-[\eta k]}(\tilde{\sigma}^{[\eta k]}(\tau))-\Psi_{[\eta k]}(\tau)\right) \leq 0.\end{equation*}	
	\label{thm:hausmcm1}\end{lemma}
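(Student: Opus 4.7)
The plan is to reduce the statement to a concavity argument for a continuous function on the probability simplex $\Delta$ indexed by $\pi(D)$. First I would observe that $\Psi_k(\tau)$ depends on $\tau$ only through the empirical distribution $\underline{q}_k(\tau)\in\Delta$: setting $F(\underline{p}):=h_{\nu_{\underline{p}}}(\tilde\sigma)-r_{\nu_{\underline{p}}}(\tilde V)$, we have $\Psi_k(\tau)=F(\underline{q}_k(\tau))$. The function $F$ is continuous on the compact simplex $\Delta$, since the Bernoulli entropy is continuous in $\underline{p}$ and the escape rate $\underline{p}\mapsto r_{\nu_{\underline{p}}}(\tilde V)$ depends continuously on $\underline{p}$ via the transfer operator framework of Propositions~\ref{thm:colmarshm}--\ref{thm:mainpert}.

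Next, by compactness I would pass to a subsequence $k_j\to\infty$ along which the three empirical distributions $\underline{q}_{[\eta k_j]}(\tau)$, $\underline{q}_{k_j-[\eta k_j]}(\tilde\sigma^{[\eta k_j]}\tau)$, and $\underline{q}_{k_j}(\tau)$ converge to limits $\underline{p}$, $\underline{q}$, $\underline{r}$ respectively. The elementary decomposition
\[k\,\underline{q}_k(\tau)=[\eta k]\,\underline{q}_{[\eta k]}(\tau)+(k-[\eta k])\,\underline{q}_{k-[\eta k]}(\tilde\sigma^{[\eta k]}\tau)\]
forces $\underline{r}=\eta\underline{p}+(1-\eta)\underline{q}$. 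Via a further sub-subsequence I would also arrange that $F(\underline{q}_{[\eta k_j]}(\tau))\to L^*:=\limsup_{n\to\infty}F(\underline{q}_n(\tau))$; this is possible because $\{[\eta k]:k\in\mathbb N\}$ exhausts all sufficiently large integers. Then $F(\underline{p})=L^*$, while $\underline{r}$ being a subsequential limit of $\underline{q}_n(\tau)$ forces $F(\underline{r})\leq L^*$.

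The crux is then to invoke concavity of $F$ on $\Delta$, which yields $F(\underline{r})\geq\eta F(\underline{p})+(1-\eta)F(\underline{q})$, and rearranging gives
\[F(\underline{q})\leq\frac{F(\underline{r})-\eta F(\underline{p})}{1-\eta}\leq\frac{L^*-\eta L^*}{1-\eta}=L^*=F(\underline{p}).\]
Consequently, along the subsequence $k_j$,
\[\Psi_{k_j-[\eta k_j]}(\tilde\sigma^{[\eta k_j]}\tau)-\Psi_{[\eta k_j]}(\tau)=F(\underline{q})-F(\underline{p})+o(1)\leq o(1),\]
giving the claimed bound on the $\liminf$.

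The main obstacle is establishing the required concavity of $F$. The Bernoulli entropy is strictly concave on $\Delta$, so the essential input is convexity of the escape rate $\underline{p}\mapsto r_{\nu_{\underline{p}}}(\tilde V)$. Since $T:X_U\to X_U$ is topologically mixing, the survivor subshift is a primitive SFT, and $-r_{\nu_{\underline{p}}}(\tilde V)$ coincides with the pressure $\log\rho(A\operatorname{diag}(\underline{p}))$ of $\log p_\cdot$ on the survivor shift, where $A$ denotes its transition matrix. Log-concavity of the Perron--Frobenius eigenvalue in the column weights $\underline{p}$---which one can extract from the implicit representation of $\rho$ via its positive left and right eigenvectors, or deduce from the variational principle---then delivers the needed convexity of $r$ and hence the concavity of $F$ that closes the argument.
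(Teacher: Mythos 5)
Your argument is, at its core, the same as the paper's: you use the exact convex-combination identity $k\,\underline{q}_k(\tau)=[\eta k]\,\underline{q}_{[\eta k]}(\tau)+(k-[\eta k])\,\underline{q}_{k-[\eta k]}(\tilde\sigma^{[\eta k]}\tau)$ together with concavity of $F=H+E$ (where $E(\underline{q})=P_{\tilde{\Sigma}_{\tilde{V}}}(\log q_{\cdot})=-r_{\nu_{\underline{q}}}(\tilde{V})$) to obtain $\Psi_k\geq\frac{[\eta k]}{k}\Psi_{[\eta k]}+\frac{k-[\eta k]}{k}\Psi_{k-[\eta k]}\circ\tilde\sigma^{[\eta k]}$, and you anchor this with the fact that $\Psi_k-\Psi_{[\eta k]}$ cannot stay bounded below by a positive constant. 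The paper gets the anchor from boundedness of $\Psi_k$ via a telescoping contradiction; you get it from compactness of the simplex and continuity of $F$ via subsequential limits. These are interchangeable, and your surjectivity remark about $k\mapsto[\eta k]$ and the final inequality chain are fine.

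The gap is precisely at the step you flag as the crux. The function $\underline{p}\mapsto\log\rho(A\operatorname{diag}(\underline{p}))$ is \emph{not} concave on the simplex for a general primitive $0$--$1$ matrix $A$, so the escape rate is not convex in $\underline{p}$, and neither of your suggested routes can deliver it. Concretely, for the three-symbol survivor shift obtained by forbidding the words $23$ and $33$ one has
\[
A=\begin{pmatrix}1&1&1\\1&1&0\\1&1&0\end{pmatrix},\qquad \rho\bigl(A\operatorname{diag}(q)\bigr)=\tfrac{1}{2}\Bigl((q_1+q_2)+\sqrt{(q_1+q_2)^2+4q_1q_3}\Bigr),
\]
which equals $0.8110\ldots$ at $(0.3,0.4,0.3)$ and $0.8090\ldots$ at $(0.5,0,0.5)$, but only $0.8$ at the midpoint $(0.4,0.2,0.4)$; since $\log 0.8<\tfrac12(\log 0.8110+\log 0.8090)$, log-concavity fails even though $A$ is primitive. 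The variational principle does not rescue this: it exhibits $E$ as a \emph{supremum} of functions concave in $\underline{q}$, and suprema of concave functions need not be concave. To be fair, the paper's own proof asserts the concavity of $E$ with no more justification than you give, so you have reproduced its argument faithfully; but as a self-contained proof the crux remains open. What is actually needed is concavity of the sum $H+E$, so any repair must exploit the strict concavity of the entropy term (whose Hessian is $-\operatorname{diag}(1/q_j)$) to absorb the failure of concavity of $E$; concavity of $E$ alone is not available.
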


\begin{proof}We first prove that 
	\begin{equation}\label{eq:mcmullen1}\liminf_{k\to\infty}\Psi_{k}(\tau)-\Psi_{[\eta k]}(\tau)\leq 0.\end{equation}  Suppose by way of contradiction that 
\begin{equation*}\liminf_{k\to\infty}\Psi_{k}(\tau)-\Psi_{[\eta k]}(\tau) > 0.\end{equation*}  Thus, there exists $\epsilon>0$ and a positive integer $k_0$ such that $\Psi_{k}(\tau)-\Psi_{[\eta k]}(\tau)\geq \epsilon$ for all $k\geq k_0$.  This implies that the sequence $\{\Psi_k(\tau)\}_k$ is unbounded, a contradiction.
	
For $\underline{q}\in\Delta_{\pi(D)}$ we write 
\begin{equation*} H(\underline{q})=-\sum_{j\in\pi(D)} q_j\log q_j, \,\,\,\,\,\,\,\,\,\,\,\,\,\,\,\,\,\,E(\underline{q})=P_{\tilde{\Sigma}_{\tilde{V}}}(\log q_{\cdot}).	\end{equation*}
	
We observe that $\Psi_k(\tau)=H(\underline{q}_k(\tau))+E(\underline{q}_k(\tau))$.  Furthermore, as both of these functions are concave we see that
\begin{equation*}\Psi_k(\tau)\geq\frac{[\eta k]}{k}\Psi_{[\eta k]}(\tau)+\frac{k-[\eta k]}{k}\Psi_{k-[\eta k]}(\tilde{\sigma}^{[\eta k]}\tau).\end{equation*}

Thus, combining this with (\ref{eq:mcmullen1}) we see that
\begin{equation*}\begin{split}\liminf_{k\to\infty} \left( \Psi_{k-[\eta k]}(\tilde{\sigma}^{[\eta k]}(\tau))-\Psi_{[\eta k]}(\tau)\right) & = \frac{1}{1-\eta} \liminf_{k\to\infty} \left( \frac{k-[\eta k]}{k}\Psi_{k-[\eta k]}(\tilde{\sigma}^{[\eta k]}(\tau))\right.\\
	&\,\,\,\,\,\,\,\,\,\,\,\,\,\,\,\,\,\,\,\,\,\,\,\,\,\,\,\,\,\,\,\,\,\,\,\,\,\,\,\,\,\,\,\,\,\,\,\,\,\,\,\,\,\,\,\,\,\,\,\,\left.+\frac{[\eta k]}{k}\Psi_{[\eta k]}(\tau)-\Psi_{[\eta k]}(\tau)\right) \\
	& \leq \frac{1}{1-\eta} \liminf_{k\to\infty} \left(\Psi_{k}(\tau)-\Psi_{[\eta k]}(\tau)\right) \leq 0.\end{split} \end{equation*} This completes the proof.
			\end{proof}

We now prove the upper bound for Theorem \ref{thm:hausMark}.

\begin{proof}Fix $\epsilon>0$ and choose $\gamma_1,\gamma_2,\ldots,\gamma_m\in [0,\log \#\pi(D)]$ such that $[0,\log \#\pi(D)]=\cup_{i=1}^m (\gamma_i-\epsilon,\gamma_i+\epsilon)$.  For $\underline{p}\in\Delta_D$ let $q(\underline{p})\in\Delta_{\pi(D)}$ be the probability vector defined by $q(\underline{p})_j=\sum_{i:(i,j)\in D}p_{i,j}$ and let
\begin{equation*}
\Gamma_i=\left\{\underline{p}\in \Delta_D: H(q(\underline{p}))+E(q(\underline{p}))>\gamma_i-2\epsilon\right\}.
\end{equation*}
For $\delta>0$, positive integers $k$ and $1\leq j \leq M$  we choose a positive integer $N(M)$ and $p_1,\ldots,p_{N(M)}\in\Gamma_i$ such that $\Gamma_i\subset \cup_{i=1}^{N(M)} B_{\delta}(p_i)$. We will also let $R=\max_{j=1}^{M}N(M)$. For $1\leq i\leq M(N)$ we let $\mathcal{R}_{k,i,j}\subseteq \mathcal{R}_k$ consist of those approximate squares satisfying
	\begin{enumerate}
		\item[i.] For $0\leq l < [\eta k]$ we have $\sigma^l(R)\cap V = \emptyset$.
		\item[ii.] For $[\eta k]  \leq l < k$ we have $\tilde{\sigma}^l(\tilde{\pi}(R))\cap \tilde{V} = \emptyset$.
		\item[iii.] We have $([\eta k]^{-1}\sum_{s=0}^{[\eta k]-1} \chi_{[d]}(\sigma^s(\omega,\tau)))_{d\in D}\in B_\delta(p_i)$.
		\item[iv.] We have $((k-[\eta k])^{-1}\sum_{s=[\eta k]}^{k-1} \chi_{\tilde{d}}(\tilde{\sigma}^s(\tau)))_{\tilde{d}\in \pi(D)}\in (\gamma_j-\epsilon,\gamma_j+\epsilon)$.
		\item[v.] We have that $\Psi_{k-[\eta k]}(\tilde{\sigma}^{[\eta k]}\tau) \leq \Psi_{[\eta k]}(\tau)+\epsilon$. \label{line:mcmullen}
	\end{enumerate}
	
We observe that by continuity of the map $\underline{p}\mapsto h_{\mu_{\underline{p}}}(\sigma)$ there exists $\delta$ small enough such that  

\begin{equation*}\# \mathcal{R}_{k,i,j} \leq e^{ [\eta k] ( h_{\mu_{\underline{p}_i}}(\sigma) - r_{\mu_{\underline{p}_i}}(V) ) + (k-[\eta k])(h_{\tilde{\pi}_*(\mu_{\underline{p}_i})}(\tilde{\sigma}) -  r_{\tilde{\pi}_*(\mu_{\underline{p}_i})}(\tilde{V}) )  + 2k\epsilon}\end{equation*}
	
By virtue of (\ref{line:mcmullen}) in combination with Lemma \ref{thm:hausmcm1} we see that for any $k_0$ we have

\begin{equation}\label{cover}X_U \subseteq \bigcup_{k\geq k_0}\bigcup_{j=1}^{M} \bigcup_{i=1}^{M(N)}  \bigcup_{R\in\mathcal{R}_{k,i,j}}\Pi(R).\end{equation}

We define
\begin{equation*}\alpha=\sup_{\underline{p}\in \Delta_D}\left\{\frac{\eta h_{\mu_{\underline{p}}}(\sigma)+(1-\eta) h_{\tilde{\pi}_*(\mu_{\underline{p}})}(\tilde{\sigma})-\eta r_{\mu_{\underline{p}}}(V)-(1-\eta)r_{\tilde{\pi}_*(\mu_{\underline{p}})}(\tilde{V})+3\epsilon}{\log m}\right\}\end{equation*}
and use (\ref{cover}) to calculate 

	\begin{equation}\begin{split}\mathcal{H}_{m^{-k_0}}^\alpha (X_U) & \leq \sum_{k\geq k_0}\sum_{j=1}^M\sum_{i=1}^{N(M)} m^{-k\alpha}\# \mathcal{R}_{k,i,j} \\
		& \leq \sum_{k\geq k_0}\sum_{j=1}^M\sum_{i=1}^{N(M)}m^{-k\alpha}e^{ [\eta k] ( h_{\mu_{\underline{p}_i}}(\sigma) - r_{\mu_{\underline{p}_i}}(V) ) + (k-[\eta k])(h_{\tilde{\pi}_*(\mu_{\underline{p}_i})}(\tilde{\sigma}) -  r_{\tilde{\pi}_*(\mu_{\underline{p}_i})}(\tilde{V}) )  + 2k\epsilon} \\
	\label{eq:hausestimate}		& \leq 
R \sum_{k=k_0}^{\infty} m^{-k\epsilon} = R(1-m^{-\epsilon})m^{-k_0\epsilon}.
\end{split}\end{equation}
Letting $k_0\to\infty$ shows that 
\begin{equation*}{\rm dim}_H(X_U)\leq \sup_{\underline{p}\in \Delta_D}\left\{\frac{\eta h_{\mu_{\underline{p}}}(\sigma)+(1-\eta) h_{\tilde{\pi}_*(\mu_{\underline{p}})}(\tilde{\sigma})-\eta r_{\mu_{\underline{p}}}(V)-(1-\eta)r_{\tilde{\pi}_*(\mu_{\underline{p}})}(\tilde{V})+3\epsilon}{\log m}\right\}\end{equation*}
letting $\epsilon\to 0$ completes the proof.
\end{proof}

\subsection{Lower bound}

We first give the proof in the case where $\tilde{U}=\emptyset$.  Let $l\in\mathbb{N}$ denote any integer for which $[i n^{-l},(i+1)n^{-l}]\times [j m^{-l},(j+1)m^{-l}]\cap U\neq \emptyset$ implies that $[i n^{-l},(i+1)n^{-l}]\times [j m^{-l},(j+1)m^{-l}]\subseteq U$.   

\begin{prop}Under the assumptions above we have
\begin{equation*}{\rm dim}_H(X_U)\geq \sup_{\underline{p}\in\Delta_D}\left\{{\rm dim}_H(\mu_{\underline{p}})+\frac{1}{\log n }\int \log\left(\frac{\mu_{\underline{p}}(\pi^{-1}(I_l(y))\cap U^c)}{\pi_*(\mu_{\underline{p}})(I_l(y))}\right) d\pi_*(\mu_{\underline{p}})(y)\right\}.\end{equation*}	\end{prop}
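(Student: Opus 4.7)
The plan is to prove the lower bound via the mass distribution principle: for each $\underline{p}\in\Delta_D$, construct a Borel probability measure $\nu=\nu_{\underline{p}}$ on $X_U$ whose lower local dimension $\nu$-almost everywhere is at least the expression inside the braces, and then take the supremum over $\underline{p}\in\Delta_D$.

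The condition $\tilde{U}=\emptyset$ is used decisively: for every level-$l$ $\tau$-cylinder $I_l(y)$, the collection
$$B(y):=\{\underline{i}\in D^l\text{ compatible with }y:((\underline{i},y))\notin V\}$$
of ``safe'' $\omega$-extensions is non-empty. This flexibility permits a construction of $\nu$ whose $\tau$-marginal is the unchanged Bernoulli $\pi_*(\mu_{\underline{p}})=\underline{q}^{\mathbb{N}}$ (a natural choice, since $\tilde{U}=\emptyset$ imposes no hole-avoidance cost in the $\tau$-direction), and whose $\omega$-coordinates, sampled conditionally on $\tau$, keep $(\omega,\tau)\in\Sigma_V$ at every time step. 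Block-by-block, the weights placed on the allowed $\omega$-blocks are proportional to their Bernoulli weights $\prod_t\tilde{p}(\omega_t|\tau_t)$, and the corresponding normalizing factor is precisely $W(y):=\mu_{\underline{p}}(\pi^{-1}(I_l(y))\cap U^c)/\pi_*(\mu_{\underline{p}})(I_l(y))$. This is why the quantity $W$ appears in the stated bound.

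The main computation is for an approximate square $R_K(x)=[(\omega,\tau)]_{([\eta K],K)}$ at a $\nu$-typical $x$. Factoring $\nu(R_K(x))$ as $\mu_{\underline{p}}(R_K(x))$ times a product of inverse $W$-factors along the $\tau$-orbit, and taking $-\tfrac{1}{K}\log$, the Shannon--McMillan--Breiman theorem applied to the Bernoulli $\mu_{\underline{p}}$ contributes $\eta h_{\mu_{\underline{p}}}(\sigma)+(1-\eta)h_{\pi_*(\mu_{\underline{p}})}(\tilde{\sigma})=\dim_H(\mu_{\underline{p}})\log m$, while the strong law of large numbers applied to the (stationary, ergodic) process $\log W(\tilde{\sigma}^{\cdot}\tau|_l)$ under $\pi_*(\mu_{\underline{p}})$ contributes the correction involving $\int\log W\,d\pi_*(\mu_{\underline{p}})$. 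Using $\eta/\log m=1/\log n$, one obtains
$$\underline{\dim}_{\rm loc}(\nu)(x)\geq \dim_H(\mu_{\underline{p}})+\frac{1}{\log n}\int \log W\,d\pi_*(\mu_{\underline{p}})$$
at $\nu$-almost every $x$, and Billingsley's lemma combined with taking the supremum over $\underline{p}$ yields the proposition.

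The main technical obstacle is ensuring that $\nu$ is genuinely supported on $X_U$. A naive block-product construction in $l$-blocks enforces the $V$-avoidance only at the aligned times $0,l,2l,\ldots$, not at the intermediate ``straddling'' windows, and would generically assign zero mass to $X_U$. One must therefore couple adjacent blocks --- for instance by an auxiliary Markov-chain-like dependence in the $\omega$-coordinates --- so that the sequence avoids $V$ at every shift. The hypothesis $\tilde{U}=\emptyset$ is precisely what keeps the resulting conditional $\omega$-distributions non-degenerate throughout this coupling, and the delicate point of the proof is to arrange this coupling so that the local dimension calculation still produces the precise correction $\int\log W\,d\pi_*(\mu_{\underline{p}})/\log n$ appearing in the statement (rather than a fractional version of it, or the sharper escape-rate correction $-r_{\mu_{\underline{p}}}(V)/\log n$, which a stronger equilibrium-state construction would yield).
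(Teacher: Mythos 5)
Your strategy is essentially the paper's: both proofs run a mass--distribution (Billingsley) argument with a block--Markov measure supported on the symbolic survivor set, both extract the correction term from exactly the normalizing factors $W(y)=\mu_{\underline{p}}(\pi^{-1}(I_l(y))\cap U^c)/\pi_*(\mu_{\underline{p}})(I_l(y))$ accumulated along the $\tau$-orbit, and both hinge on the straddling-window issue you correctly single out. The one substantive difference is that you leave the coupling between adjacent blocks as an acknowledged open step, whereas the paper makes it concrete: it fixes a block length $k$, restricts to $\tau$-blocks $\tilde{\mathcal{B}}_k$ whose empirical frequencies are $\epsilon$-close to $\pi_*(\mu_{\underline{p}})$ and whose last $l$ symbols stay clear of the projected hole, defines a compatibility function $\rho_k$ on pairs of blocks encoding avoidance of $V$ at the $l$ straddling times, and puts a Markov measure on compatible block sequences that is \emph{uniform} over the $F_k(\tau)$ admissible successors and the $P_k(\tau)$ admissible $\omega$-fillings, rather than Bernoulli-weighted as you propose; the counts $F_k$ and $P_k$ are then controlled by the ergodic theorem ($\frac1k\log F_k\to h_{\pi_*(\mu_{\underline{p}})}$ and $\liminf\frac1k\log P_k\geq\int\log z\,d\pi_*(\mu_{\underline{p}})+\int\log W\,d\pi_*(\mu_{\underline{p}})$), and $\epsilon\to0$ at the end recovers the exact constant. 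So your Bernoulli-conditional weighting is a cosmetic variant, but to make your sketch a proof you would still need to supply the analogue of $\rho_k$ and verify that the local-dimension computation survives the block boundaries, which is precisely where the paper does its work.
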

	
	\begin{proof}Let $\underline{p}\in \Delta_{D}$ and $\mu_{\underline{p}}$ denote the associated Bernoulli measure supported on $\Sigma$.  Let $V\subset \Sigma$ denote the symbolic representation of the set $U$.  Now fix $\epsilon>0$ and a positive integer $k$ and set 
		\begin{equation*}	\begin{split}\tilde{\mathcal{B}}_k  =\Big\{\tau=\tau_0 \tau_1 \cdots \tau_{k-1}\in \pi(D)^k&\,:\, \Big| k^{-1} \# \{1\leq i  \leq k\,:\,\tau_i  =j\}-\tilde{\pi}_*(\mu_{\underline{p}})[j]\Big|<\epsilon 	\\
					 &  \text{ for all }j\in\pi(D)\text{ and }[\tau_{k-l}\tau_{k-l+1}\cdots \tau_{k-1}]\cap\tilde{\pi}(V)=\emptyset\Big\}.\end{split}\end{equation*}
		
					We define a function $\rho_k:\tilde{\mathcal{B}}_k\times\tilde{\mathcal{B}}_k\to\{0,1\}$ by

					\begin{equation*}\rho(\tau,\tau^\prime)=\begin{cases} 1 & \text{ if }\tilde{\sigma}^{-i}(\tilde{\pi}(V))\cap [\tau\tau^\prime]\text{ for }k-l\leq i <k \\
						0 & \text{ otherwise }.\end{cases}\end{equation*}

Then for $\tau\in\tilde{\mathcal{B}}_k$ we set

\begin{equation*} F_k(\tau)=\#\{\tau^\prime\in\tilde{\mathcal{B}}_k\,:\,\rho(\tau,\tau^\prime)=1\}. \end{equation*}

Then we see that for $\tilde{\pi}_*(\mu_{\underline{p}})$-almost all $\tau=(\tau_i)_{i=0}^\infty\in\{0,1,\ldots,m-1\}^{\mathbb{N}}$ we have

\begin{equation}\label{eq:lbest1}\lim_{k\to\infty}\frac{1}{k}\log\left(F_k(\tau_0 \tau_1\cdots \tau_{k-1} ) \right)=h_{\tilde{\pi}_*(\mu_{\underline{p}})}(\sigma).\end{equation} 
		
For $\tau\in\tilde{\mathcal{B}}_k$ we let 
		
		\begin{equation*}\begin{split} P_k(\tau):=\#\big\{\omega=\omega_0 \omega_1\cdots \omega_{k-1}&\in\{0,1,\ldots,n-1\}^k\,:\,(\omega_i,\tau_i)\in D\text{ for }i=0,1,\ldots,k-1 \\
			&\,\,\,\,\,\,\,\,\,\,\,\,\,\,\,\,\,\,\,\,\,\,\,\,\, \text{ and } \sigma^{-i}(V)\cap [\omega,\tau] = \emptyset \text{ for }i=0,1,\ldots,k-l-1 \big\}.\end{split}\end{equation*} One may deduce that for any $\tau\in\tilde{\mathcal{B}}_k$ 

			\begin{equation*} P_k(\tau) \geq \prod_{i=0}^{k-1} z(\tau_i) \prod_{i=0}^{k-l-1} \frac{\mu_{\underline{p}}(\tilde{\pi}^{-1}[\tilde{\sigma}^i(\tau)]_l\cap V^c)}{\tilde{\pi}_*(\mu_{\underline{p}})[\tilde{\sigma}^i(\tau)]_l}. \end{equation*}		
		
An application of the ergodic theorem yields for $\tilde{\pi}_*(\mu_{\underline{p}})$-almost all $\tau\in\{0,1,\ldots,m-1\}^\mathbb{N}$

\begin{equation}\label{eq:lbest2}\liminf_{k\to\infty} \frac{1}{k}\log P_k(\tau_0 \tau_1\cdots \tau_{k-1}) \geq \int \log z(\tau_0) d\pi_*(\mu_{\underline{p}})(\tau) + \int \log\left(\frac{mu_{\underline{p}}(\tilde{\pi}^{-1}[\tau]_l\cap V^c)}{\pi_*(\mu_{\underline{p}})[\tau]_l}\right) d\tilde{\pi}_*(\mu)(\tau).\end{equation}
	
Next set

\begin{equation*}\begin{split}A_{k}(\tau)=\big\{\omega=\omega_0 \omega_1\cdots \omega_{k-1}&\in\{0,1,\ldots,n-1\}^k\,:\,(\omega_i,\tau_i)\in D\text{ for }i=0,1,\ldots,k-1 \\
	&\,\,\,\,\,\,\,\,\,\,\,\,\,\,\,\,\,\,\,\,\,\,\,\,\, \text{ and } \sigma^{-i}(V)\cap [\omega,\tau] = \emptyset \text{ for }i=0,1,\ldots,k-l-1 \big\}\end{split}\end{equation*}
and let 
\begin{equation*}\mathcal{B}_k=\bigcup_{\tau\in\tilde{\mathcal{B}}_k} A_{k}(\tau)\times\{\tau\}.\end{equation*}
We now let
\begin{equation*}\tilde{X}=\{(\tilde{\omega}_i,\tilde{\tau}_i)\in\mathcal{B}_k^{\mathbb{N}}\,:\,\rho_k(\tilde{\tau}_i,\tilde{\tau}_{i+1})=1\text{ for }i=0,1,\ldots\}\end{equation*}
and note that $\tilde{X}$ may be viewed as a subset of $\Sigma$ and in this case we see that   $\Pi(\tilde{X})\subset X_U$.  We will now construct a probability measure with support $\tilde{X}$.  We define $\nu$ to be the Markov measure defined as

\begin{equation*}\nu([\tilde{\omega}_0\tilde{\omega}_1\cdots\tilde{\omega}_r,\tilde{\tau}_0\tilde{\tau}_1\cdots\tilde{\tau}_r]) = (\#\tilde{\mathcal{B}}_k)^{-1}F_k(\tilde{\tau}_0)\prod_{i=0}^{r} (F_k(\tilde{\tau}_i) P_k(\tilde{\tau}_i))^{-1}\end{equation*}
	
Hence for an approximate square $[\tilde{\omega}_0\tilde{\omega}_1\cdots\tilde{\omega}_{[\eta r]},\tilde{\tau}_0\tilde{\tau}_1\cdots\tilde{\tau}_r]$ we have that

\begin{equation*}\nu([\tilde{\omega}_0\tilde{\omega}_1\cdots\tilde{\omega}_{[\eta r]},\tilde{\tau}_0\tilde{\tau}_1\cdots\tilde{\tau}_r]) = (\#\tilde{\mathcal{B}}_k)^{-1}F_k(\tilde{\tau}_0)\prod_{i=0}^{[\eta r]} (F_k(\tilde{\tau}_i) P_k(\tilde{\tau}_i))^{-1} \prod_{i=[\eta r]+1}^r F_k(\tilde{\tau}_i)^{-1}.\end{equation*}

Now combining equations (\ref{eq:lbest1}) and (\ref{eq:lbest2}) with the above yields for large enough $k$

\begin{equation*}\begin{split}\liminf_{r\to\infty}\frac{\nu([\tilde{\omega}_0\tilde{\omega}_1\cdots\tilde{\omega}_{[\eta r]},\tilde{\tau}_0\tilde{\tau}_1\cdots\tilde{\tau}_r])}{\log m^{-r}}& \geq \frac{1}{\log m}\Bigg(  h_{\tilde{\pi}_*(\mu)}(\sigma) + \eta \int \log z(\omega_0) d\tilde{\pi}_*(\mu)(\omega)\\
	& \,\,\,\,\,\,\,\,\,\,\,+\eta\log\left(\frac{\mu_{\underline{p}}(\tilde{\pi}^{-1}[\tau]_l\cap V^c)}{\tilde{\pi}_*(\mu_{\underline{p}})[\tau]_l}\right) d\pi_*(\mu_{\underline{p}})(\tau) \Bigg)-\epsilon\\
	& = \frac{1}{\log m} \Bigg(\eta h_{\mu_{\underline{p}}}(\sigma) + (1-\eta)h_{\tilde{\pi}_*(\underline{p})}(\tilde{\sigma})  \\
& \,\,\,\,\,\,\,\,\,\,\,	+ \eta\log\left(\frac{\mu_{\underline{p}}(\tilde{\pi}^{-1}[\tau]_l\cap V^c)}{\tilde{\pi}_*(\mu_{\underline{p}})[\tau]_l}\right) d\tilde{\pi}_*(\mu_{\underline{p}})(\tau) \Bigg)-\epsilon.\end{split}\end{equation*} Letting $\epsilon\to 0$ completes the proof.  
\end{proof}

We now deal with the other case, that is $\tilde{U}=\pi(U)$. We start with a couple of straightforward lemmas. 
\begin{lemma}
Let $\nu$ denote a $\sigma$-invariant ergodic probability measure and $\mu$ be the Bernoulli measure on $\Sigma$ where $\mu\circ\Pi^{-1}=\mu_{\dim}$. It follows that for $\nu$-almost all $(\omega,\tau)\in\Sigma$ we have
	\begin{equation*} \lim_{k\to\infty} \frac{\log \mu (R_k(\omega,\tau))}{\log m^{-k}} = {\rm dim}_H(X) \end{equation*} where $R_k(\omega,\tau)$ denotes the unique element $R\in \mathcal{R}_k$ containing $(\omega,\tau)$.\label{thm:mcmalmostsure}
\end{lemma}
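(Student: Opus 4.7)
The plan is to write down $\mu(R_k(\omega,\tau))$ explicitly using the Bernoulli structure, apply Birkhoff's ergodic theorem, and then exploit the specific form of the weights $p_{(i,j)}=z(j)^{\eta-1}/m^s$ so that an algebraic cancellation removes the only term which could depend on $\nu$.

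First I would unpack the approximate square. By definition $R_k(\omega,\tau)$ fixes the first $[\eta k]$ coordinates of $\omega$ and the first $k$ coordinates of $\tau$, while for $[\eta k]\le i<k$ the $\omega'_i$ coordinate is free subject to $(\omega'_i,\tau_i)\in D$. Since $\mu$ is Bernoulli with weights $p_{(i,j)}=z(j)^{\eta-1}/m^s$ (which, being independent of $i$, sum in each column to $q_{\tau_i}:=z(\tau_i)^\eta/m^s$), this gives the factorisation
\begin{equation*}
\mu(R_k(\omega,\tau))
=\prod_{i=0}^{[\eta k]-1}\frac{z(\tau_i)^{\eta-1}}{m^s}\;\cdot\;\prod_{i=[\eta k]}^{k-1}\frac{z(\tau_i)^{\eta}}{m^s}.
\end{equation*}
Taking logarithms and using $\log p_{(i,j)}=(\eta-1)\log z(j)-s\log m$ and $\log q_j=\eta\log z(j)-s\log m$, the expression $\log\mu(R_k(\omega,\tau))$ splits into two ergodic sums of the bounded function $f(\omega,\tau)=\log z(\tau_0)$, plus the deterministic term $-ks\log m$.

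Next I would apply Birkhoff's ergodic theorem with respect to $\nu$. The function $f$ is bounded (since $1\le z(\tau_i)\le n$ for $\tau_i\in\pi(D)$) and depends only on the factor $\tilde\pi$, and $\tilde\pi_*\nu$ is $\tilde\sigma$-invariant and ergodic as a factor of the ergodic measure $\nu$. Writing $a=\int \log z(\tau_0)\,d\tilde\pi_*\nu$, for $\nu$-almost every $(\omega,\tau)$ we obtain
\begin{equation*}
\frac{1}{k}\sum_{i=0}^{[\eta k]-1}\log z(\tau_i)\to \eta\,a,
\qquad
\frac{1}{k}\sum_{i=[\eta k]}^{k-1}\log z(\tau_i)\to (1-\eta)\,a.
\end{equation*}

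Finally I would carry out the key algebraic cancellation. Combining the two ergodic limits,
\begin{equation*}
\lim_{k\to\infty}\frac{\log\mu(R_k(\omega,\tau))}{k}
=(\eta-1)\eta\,a+\eta(1-\eta)\,a-s\log m
=-s\log m,
\end{equation*}
so the $a$-dependent terms cancel exactly; this is the point where the special form of the $\mu_{\dim}$-weights is used. Dividing by $\log m^{-k}=-k\log m$ yields the stated limit $s=\dim_H X$. The only real step to watch is the ergodicity of $\tilde\pi_*\nu$ and the ``$\nu$-independence'' miracle; everything else is routine. I do not anticipate a genuine obstacle, since the weights of $\mu_{\dim}$ are engineered precisely so that this cancellation occurs.
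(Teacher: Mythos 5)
Your proof is correct and is essentially the same argument as the paper's: both write $\log\mu(R_k(\omega,\tau))$ explicitly as $-ks\log m$ plus ergodic sums of $\log z(\tau_i)$ (the paper organises it as $\eta\sum_{i<k}\log z(\tau_i)-\sum_{i<[\eta k]}\log z(\tau_i)$, you split it as two products before taking logs), and both then invoke Birkhoff and note that the $\eta$-weighted terms cancel exactly, leaving $\dim_H X$. The detour through the factor measure $\tilde\pi_*\nu$ is unnecessary—one can apply Birkhoff directly on $(\Sigma,\sigma,\nu)$ to the bounded function $(\omega,\tau)\mapsto\log z(\tau_0)$—but it does no harm.
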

\begin{proof}
Let $(\omega,\tau)\in\Sigma$, fix $k\in\N$ . We then have that by the definition of $\mu$
$$\log\mu(R_k(\omega,\tau))=-k\dim_H (X)\log m+\sum_{i=1}^{k}\log z(\omega_i)^{\eta}-\sum_{i=1}^{[\eta k]}\log z(\omega_i).$$
By the Birkhoff Ergodic Theorem for $\nu$ almost all $(\omega,\tau)\in\Sigma$
$$\lim_{k\to \infty}\frac{1}{k}\left(\sum_{i=1}^{k}\log z(\omega_i)^{\eta}-\sum_{i=1}^{[\eta k]}\log z(\omega_i)\right)=0$$
and thus 
$$\lim_{k\to\infty}\frac{\log\mu(R_k(\omega,\tau))}{-k\log m}=\dim_H X.$$
\end{proof}

The following result relates the typical symbolic local dimension to the real local dimension.
\begin{lemma}
Let $\nu$ denote an ergodic $\sigma$-invariant measure with support in $\Sigma$.  Set $\tilde{\nu}=\nu\circ\Pi^{-1}$ and suppose that for $\nu$-almost all $(\omega,\tau)\in\Sigma$ we have 
$\liminf_{k\to\infty}\frac{\log \nu(R_k(\omega,\tau))}{-k\log m}\geq s$
then
for $\tilde{\nu}$-almost all $x\in X$ we have
$$\liminf_{r\to 0}\frac{\log \tilde{\nu}(B(x,r))}{\log r}\geq s$$
and in particular 
$$\dim_H\tilde{\nu}\geq s.$$ 
\label{sym-loc}
\end{lemma}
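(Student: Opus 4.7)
The plan is to pass from the symbolic bound on $\nu(R_k(\omega,\tau))$ to a metric bound on $\tilde{\nu}(B(x,r))$ via the geometric comparison between approximate squares and balls at matching scales, and then to conclude the statement on $\dim_H\tilde{\nu}$ via Billingsley's mass distribution principle.

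First I would establish the geometric comparison. Since $n^{-\eta k}=m^{-k}$, the image $\Pi(R)$ of any $R\in\mathcal{R}_k$ is a rectangle of dimensions $n^{-[\eta k]}\times m^{-k}$ and hence has diameter at most $C_0 m^{-k}$ for some constant $C_0=C_0(n,m)$. Therefore, for $x\in X$, $r>0$ and $k$ chosen so that $m^{-(k+1)}\leq r<m^{-k}$, any code $(\omega',\tau')\in\Pi^{-1}(B(x,r))$ has $\Pi(R_k(\omega',\tau'))\subseteq B(x,(C_0+1)m^{-k})$, and the number of distinct approximate squares whose images fit in this neighbourhood is at most $N=N(n,m)$. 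Thus $\Pi^{-1}(B(x,r))$ is contained in a union $R_k^{(1)}\cup\cdots\cup R_k^{(N)}$ of at most $N$ approximate squares at level $k$, giving
\[
\tilde{\nu}(B(x,r))=\nu(\Pi^{-1}(B(x,r)))\leq N\max_{1\leq j\leq N}\nu(R_k^{(j)}).
\]

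Next I would apply Egorov's theorem to the hypothesis. For each $\epsilon>0$ and $\delta>0$ there is a measurable set $G_{\epsilon,\delta}\subseteq\Sigma$ with $\nu(G_{\epsilon,\delta})\geq 1-\delta$ and an integer $K(\epsilon,\delta)$ such that every $R\in\mathcal{R}_k$ with $k\geq K(\epsilon,\delta)$ and $R\cap G_{\epsilon,\delta}\neq\emptyset$ satisfies $\nu(R)\leq m^{-k(s-\epsilon)}$. The ``bad'' approximate squares --- those disjoint from $G_{\epsilon,\delta}$ --- lie inside $\Sigma\setminus G_{\epsilon,\delta}$ and hence have total $\nu$-mass at most $\delta$.

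The main obstacle is then to bound the $\tilde{\nu}$-measure of those $x$ whose local cover at level $k$ contains a bad approximate square. Let $\tilde{\mathcal{F}}_k$ be this exceptional set; every $x\in\tilde{\mathcal{F}}_k$ lies within distance $(C_0+1)m^{-k}$ of $\Pi(R)$ for some bad $R$, so $\Pi^{-1}(\tilde{\mathcal{F}}_k)$ is contained in the union of the bad squares together with their at most $M$ immediate neighbours at level $k$. The bad squares contribute $\nu$-mass at most $\delta$, while each good neighbour of a bad square carries $\nu$-mass at most $m^{-k(s-\epsilon)}$; a careful counting argument --- balancing the number of bad approximate squares against the individual mass bound $m^{-k(s-\epsilon)}$ of their good neighbours --- yields $\tilde{\nu}(\tilde{\mathcal{F}}_k)\leq C\delta$ uniformly in $k\geq K(\epsilon,\delta)$. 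Choosing $\delta=\delta_i=2^{-i}$ with the corresponding $K_i$ and applying a Borel--Cantelli argument, one obtains that for $\tilde{\nu}$-almost every $x$ the entire cover is good for all sufficiently large $k$.

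It follows that for such $x$, $\tilde{\nu}(B(x,r))\leq N m^{-k(s-\epsilon)}$ for all small enough $r\approx m^{-k}$, whence
\[
\liminf_{r\to 0}\frac{\log\tilde{\nu}(B(x,r))}{\log r}\geq s-\epsilon.
\]
Letting $\epsilon\to 0$ through a countable sequence of rationals yields the pointwise lower bound $\geq s$ for $\tilde{\nu}$-almost every $x$, and the conclusion $\dim_H\tilde{\nu}\geq s$ is then immediate from Billingsley's mass distribution principle. The delicate point throughout is the neighbour-counting estimate in the third paragraph, which must exploit the bounded local overlap of approximate squares under $\Pi$ to convert the total $\nu$-mass bound $\delta$ on the bad squares into a corresponding $\tilde{\nu}$-mass bound on their fattening.
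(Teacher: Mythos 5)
Your first step --- covering $\Pi^{-1}(B(x,r))$ by a bounded number $N$ of level-$k$ approximate squares --- is fine, but the argument breaks at exactly the point you flag as delicate, and it cannot be repaired in the form you propose. The hypothesis controls $\nu(R_k(\omega,\tau))$ only for the square through a $\nu$-typical point, so after Egorov you must handle the squares in the cover of $B(x,r)$ that are disjoint from $G_{\epsilon,\delta}$. The claimed bound $\tilde{\nu}(\tilde{\mathcal{F}}_k)\le C\delta$ does not follow from the data you have: the total mass bound $\nu(\bigcup\text{bad})\le\delta$ gives no upper bound on the \emph{number} of bad squares (each may carry arbitrarily small or zero mass), and that number can be of order $m^{k\,\overline{\dim}_B X}$, which dominates $m^{k(s-\epsilon)}$ since the box dimension generally exceeds $s$; worse, there is no reason the good squares adjacent to bad ones should have small total mass --- in principle every good square could abut a bad one, making $\tilde{\nu}(\tilde{\mathcal{F}}_k)$ close to $1$. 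Even granting the uniform bound $C\delta$, the Borel--Cantelli step fails: you need $\sum_k\tilde{\nu}(\tilde{\mathcal{F}}_k)<\infty$, and a bound of $C2^{-i}$ valid on blocks $K_i\le k<K_{i+1}$ of uncontrolled (Egorov-dependent) length does not sum; a bound that is merely uniform in $k$ only yields $\tilde{\nu}(\limsup_k\tilde{\mathcal{F}}_k)\ge\limsup_k\tilde{\nu}(\tilde{\mathcal{F}}_k)$ by reverse Fatou, which is the wrong direction.

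The paper avoids the neighbour problem entirely by reversing your inclusion: instead of covering $B(x,r)$ by several level-$k$ squares, it shows that for $\tilde{\nu}$-a.e.\ $x$ and all large $k$ the \emph{smaller} ball $B(x,m^{-k(1+\epsilon)})$ is \emph{contained in} $\Pi(R_k(\omega,\tau))$ for a coding $(\omega,\tau)$ of $x$ (its condition B), so only the single approximate square through $x$ --- precisely where the hypothesis applies --- is ever used, at the harmless cost of degrading the exponent from $s-\epsilon$ to roughly $(s-\epsilon)/(1+\epsilon)$. Condition B amounts to saying that $x$ is not abnormally close to the boundary of its level-$k$ approximate square for infinitely many $k$, and this is verified with the Birkhoff ergodic theorem (with a separate elementary treatment when $\nu$ is carried by a single row or column). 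If you wish to keep your covering direction, you would in any case need such quantitative control of the distance from $x$ to the boundary of $\Pi(R_k(\omega,\tau))$ so that, for large $k$, the ball meets only that one square; this is the missing idea.
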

\begin{proof}
We let $\epsilon>0$ and define a set $A(\epsilon)\subset X$ such that $x\in A$ if 
there exists $(\omega,\tau)\in\Sigma$ and $K(x)\in\N$ such that
\begin{enumerate}
\item[A.]
 $\Pi(\omega,\tau)=x$ and for all $k\geq K(x) $ we have that $\nu(R_k(\omega,\tau))\leq m^{-k(s-\epsilon)}$
\item[B.]
for all $k\geq K(x)$ we have that $B(x,m^{-{k+\epsilon}})\subset \Pi(R_k(\omega,\tau))$.
\end{enumerate}
We then have that if $x\in A(\epsilon)$ then for $k\geq K(y)$ and $m^{-{k(1+\epsilon)}}\geq r> m^{-{(k+1)(1+\epsilon)}}$ 
$$\tilde{\nu}(B(x,r))\leq m^{-k(s-\epsilon)}\leq m^{(-1-\epsilon)(s-\epsilon)}m^{\epsilon k(s-\epsilon)}r^{s-\epsilon}.$$
Thus if $x\in A(\epsilon)$ for all $\epsilon$ where $\epsilon^{-1}\in\N$ then
$$\liminf_{r\to 0}\frac{\log \tilde{\nu}(B(x,r))}{\log r}\geq s.$$

So we need to show that $\tilde{\nu}\left(\cup_{i=1}^{\infty}A(i^{-1})\right)=1$. This follows for condition A by the assumption in the lemma. Thus to have $\tilde{\nu}\left(\cup_{i=1}^{\infty}A(i^{-1})\right)\neq 1$ requires condition B. to fail on a set of positive measure. However if $\nu$ assigns positive measure to more than one row and more than one column then by the Birkhoff Ergodic Theorem condition B must be satisfied for any $\epsilon>0$ on a set of full measure. If $\nu$ is just supported on one row or one column then $\tilde{\nu}$ has one-dimensional support and is ergodic under either $x\to n x\mod 1$ or $x\to m x\mod 1$ in which case the lemma is well known and a simple exercise to prove.    
\end{proof}

We can now complete the lower bound in this case. 
\begin{prop}Under the assumptions above we have
\begin{equation*}{\rm dim}_H(X_U)\geq \sup_{\underline{p}\in\Delta_D}\left\{{\rm dim}_H(\mu_{\underline{p}})-\frac{r_{\mu_{\underline{p}}}(U)}{\log m}\right\}\end{equation*} \end{prop}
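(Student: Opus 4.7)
The plan is to construct a probability measure supported on the symbolic survivor set whose lower local dimension (on approximate squares) matches the claimed lower bound, then push forward and apply Lemma~\ref{sym-loc}. Fix $\underline{p} \in \Delta_D$ and write $\mu = \mu_{\underline{p}}$, $\tilde{\mu} = \tilde{\pi}_*(\mu)$. The key structural observation is that the hypothesis $\tilde{U} = \pi(U)$ makes $V = \tilde{\pi}^{-1}(\tilde{V})$, so the event $\{\sigma^i(\omega,\tau) \notin V\}$ coincides with $\{\tilde{\sigma}^i(\tau) \notin \tilde{V}\}$. Consequently $\Sigma_V = \tilde{\pi}^{-1}(\tilde{\Sigma}_{\tilde{V}})$ and the escape rates agree, $r_\mu(V) = r_{\tilde{\mu}}(\tilde{V})$. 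Invoking the McMullen-type identity $\dim_H(\mu_{\underline{p}}) = (\eta h_\mu(\sigma) + (1-\eta) h_{\tilde{\mu}}(\tilde{\sigma}))/\log m$, the target lower bound becomes $(\eta h_\mu(\sigma) + (1-\eta) h_{\tilde{\mu}}(\tilde{\sigma}) - r_{\tilde{\mu}}(\tilde{V}))/\log m$.

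I follow the block-coding scheme from the $\tilde{U} = \emptyset$ case but transfer the escape-rate correction from the $\omega$-coordinate to the $\tau$-coordinate. Fix $\epsilon > 0$ and a large $k$, and let $\tilde{\mathcal{B}}_k$ consist of those $\tau = \tau_0 \cdots \tau_{k-1}$ that are $(\epsilon,\tilde{\mu})$-typical and satisfy $\tilde{\sigma}^i([\tau]) \cap \tilde{V} = \emptyset$ for $0 \le i \le k-l-1$. For $\tau \in \tilde{\mathcal{B}}_k$ let $A_k(\tau) = \{\omega \in \{0,\ldots,n-1\}^k : (\omega_i,\tau_i) \in D\}$; since $V$ depends only on $\tau$, \emph{every} such $\omega$ automatically yields a surviving pair, so no $\omega$-restriction appears in $A_k$. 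Set $\mathcal{B}_k = \bigcup_\tau A_k(\tau) \times \{\tau\}$, define $\rho_k(\tau,\tau') = 1$ iff the junction $\tau\tau'$ still avoids $\tilde{V}$, and let $\tilde{X} \subset \mathcal{B}_k^{\mathbb{N}}$ be the coded subsystem of admissible concatenations, naturally embedded in $\Sigma_V$. On $\tilde{X}$ define the $\mu$-weighted Markov measure
$$\nu([\tilde{\omega}_0\cdots\tilde{\omega}_r,\tilde{\tau}_0\cdots\tilde{\tau}_r]) \;=\; \nu_0([\tilde{\omega}_0,\tilde{\tau}_0]) \prod_{i=1}^{r} \frac{\mu([\tilde{\omega}_i,\tilde{\tau}_i])}{G_k(\tilde{\tau}_{i-1})},$$
where $G_k(\tilde{\tau}) = \sum_{\tilde{\tau}':\, \rho_k(\tilde{\tau},\tilde{\tau}')=1} \tilde{\mu}([\tilde{\tau}'])$ and $\nu_0$ is chosen so that $\nu$ is a probability. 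Lemma~\ref{thm:glue} applied to $\tilde{\mu}$ and $\tilde{V}$, together with the defining upper bound from Proposition~\ref{thm:colmarshm}, yields $\log G_k(\tilde{\tau}) = -k r_{\tilde{\mu}}(\tilde{V}) + o(k)$, uniformly in $\tilde{\tau} \in \tilde{\Sigma}_{\tilde{V}}$.

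To compute the local dimension take $(\omega,\tau)$ $\nu$-typical and write $N = kr$. The approximate square $R_N(\omega,\tau)$ fixes $\tilde{\tau}_0,\ldots,\tilde{\tau}_{r-1}$ together with $\tilde{\omega}_0,\ldots,\tilde{\omega}_{[\eta r]-1}$; the remaining $r - [\eta r]$ blocks have only $\tilde{\tau}_i$ fixed, and summing $\mu([\tilde{\omega}_i,\tilde{\tau}_i])$ over $\tilde{\omega}_i \in A_k(\tilde{\tau}_i)$ collapses that factor to $\tilde{\mu}([\tilde{\tau}_i])$. Combining the Birkhoff Ergodic Theorem applied to the $\nu$-shift on $\tilde{X}$ (so that $-\log\mu([\tilde{\omega}_i,\tilde{\tau}_i]) = k h_\mu(\sigma) + o(k)$ and $-\log\tilde{\mu}([\tilde{\tau}_i]) = k h_{\tilde{\mu}}(\tilde{\sigma}) + o(k)$) with the asymptotics of $G_k$ gives
$$-\log\nu(R_N(\omega,\tau)) \;\leq\; N\bigl(\eta h_\mu(\sigma) + (1-\eta) h_{\tilde{\mu}}(\tilde{\sigma}) - r_{\tilde{\mu}}(\tilde{V})\bigr) + o(N).$$
Hence the lower symbolic local dimension of $\nu$ on approximate squares is at least $\dim_H(\mu) - r_\mu(V)/\log m$. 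Because $\Pi(\tilde{X}) \subseteq X_U$, pushing $\nu$ forward and applying Lemma~\ref{sym-loc} gives $\dim_H(X_U) \geq \dim_H(\mu_{\underline{p}}) - r_{\mu_{\underline{p}}}(U)/\log m$, and a supremum over $\underline{p} \in \Delta_D$ completes the proof. The principal technical step is the uniform asymptotic for $G_k$: once the sum is restricted to typical $\tilde{\tau}'$ via a standard large-deviation estimate, Lemma~\ref{thm:glue} supplies the lower bound and the definition of the escape rate the matching upper bound, with the $\tilde{\tau}$-dependence confined to the (bounded) junction factor and absorbed into the $o(k)$ error.
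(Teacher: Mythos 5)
Your structural observation is correct and is the right way to exploit the hypothesis $\tilde{U}=\pi(U)$: here $U=\pi^{-1}(\pi(U))\cap X$, so $V=\tilde{\pi}^{-1}(\tilde{V})$, survival is a condition on $\tau$ alone, and $r_{\mu_{\underline{p}}}(V)=r_{\tilde{\pi}_*(\mu_{\underline{p}})}(\tilde{V})$. Your route — a block-coded Markov measure on concatenations of surviving $k$-blocks, in the spirit of the $\tilde{U}=\emptyset$ argument — is genuinely different from the paper's, which instead takes the conditionally invariant Gibbs measure $\mu_U$ on $\Sigma_U$, uses the Gibbs property to get $\mu_U(C_k)\asymp \mu(C_k)e^{-kP_{\Sigma_U}(\phi_U)}$ with $P_{\Sigma_U}(\phi_U)=-r_\mu(U)$, sums over cylinders inside an approximate square, and then applies Lemmas \ref{thm:mcmalmostsure} and \ref{sym-loc}. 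The paper's measure comes ready-made with the correct Gibbs asymptotics on every cylinder, which is precisely what your construction has to manufacture by hand — and that is where your argument has a gap.

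Two points. First, the key display has the inequality backwards: to bound the lower local dimension from below you need a \emph{lower} bound $-\log\nu(R_N)\geq N\bigl(\eta h_\mu+(1-\eta)h_{\tilde{\mu}}-r_{\tilde{\mu}}(\tilde{V})\bigr)-o(N)$, i.e.\ an upper bound on $\nu(R_N)$; this requires upper bounds on the block weights $\mu([\tilde{\omega}_i,\tilde{\tau}_i])$, $\tilde{\mu}([\tilde{\tau}_i])$ and a \emph{lower} bound on $G_k$. (Also, $-\log\mu([\tilde{\omega}_i,\tilde{\tau}_i])=kh_\mu+o(k)$ is false term by term, since $A_k(\tilde{\tau})$ contains all compatible $\omega$-words including atypical ones; it only holds as a Birkhoff average of the conditional expectation given $\tilde{\tau}_i$, which is $kh_\mu+O(k\epsilon)$ for $\tilde{\mu}$-typical $\tilde{\tau}_i$.) Second, and more seriously, the ``uniform asymptotic'' $\log G_k=-kr_{\tilde{\mu}}(\tilde{V})+o(k)$ is not justified as stated: $G_k$ sums only over $\tilde{\tau}'\in\tilde{\mathcal{B}}_k$, i.e.\ surviving \emph{and} $\epsilon$-typical words. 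Lemma \ref{thm:glue} gives $\tilde{\mu}(\text{surviving})\geq e^{-kr_{\tilde{\mu}}(\tilde{V})}(c+o(1))$, but removing the atypical words costs up to $\tilde{\mu}(\text{atypical})\approx e^{-kI(\epsilon)}$, and the large-deviation rate $I(\epsilon)\to 0$ as $\epsilon\to 0$ while $r_{\tilde{\mu}}(\tilde{V})>0$ is fixed; so for small $\epsilon$ the atypical mass can dominate the surviving mass and the lower bound on $G_k$ fails. Unlike the $\tilde{U}=\emptyset$ case, where the survival condition only constrains $O(l)$ junction symbols, here it constrains the whole word, so typicality for $\tilde{\mu}$ and survival are in genuine tension. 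You would need either to drop the typicality requirement and control $\mathbb{E}_\nu[-\log\tilde{\mu}([\tilde{\tau}_0])]$ under the survival-conditioned marginal (which is not obviously $\geq kh_{\tilde{\mu}}-o(k)$), or to work with typicality for the conditionally invariant measure on $\tilde{\Sigma}_{\tilde{V}}$ — at which point you are essentially reconstructing the paper's Gibbs-measure argument.
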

	\begin{proof}
We begin by observing that by the Gibbs property of $\mu_U$ there exists a constant $C>1$ such that for any cylinder set in the space $C_n(\omega,\tau)$ we have
		\begin{equation*} C^{-1} \leq \frac{\mu_U (C_k(\omega,\tau)) }{e^{\phi^k_U(\omega,\tau)-kP_{\Sigma_{U}}(\phi_U)}} \leq C \end{equation*}	for all $x\in \Sigma_U$ and $n\geq 1$.  Combining this with the analogous estimate for $\mu$ we see that there exists a constant $\tilde{C}>1$ such that for any $(\omega,\tau)\in \Sigma_U$ we have
		\begin{equation*} \tilde{C}^{-1} \leq \frac{\mu_U(C_k(\omega,\tau))}{\mu(C_k(x)) e^{-kP_{\Sigma_{U}(\phi_U)}}} \leq \tilde{C}.\end{equation*}

		Using this we see that for $\mu_U$ almost all $(\omega,\tau)\in\Sigma_{U}$ we have 

		\begin{equation*}\begin{split}\frac{\log \mu_U(R_k(\omega,\tau))}{\log m^{-k}} & = \frac{\log \sum_{C_k(\omega^\prime,\tau^\prime)\subset R_k(\omega,\tau)\,:\,(\omega^\prime,\tau^\prime)\in\Sigma_{U}} \mu_U(C_k(\omega^\prime,\tau^\prime))}{\log m^{-k}} \\
			& \geq \frac{\log \sum_{C_k(\omega^\prime,\tau^\prime)\subset R_k(\omega,\tau)\,:\,(\omega^\prime,\tau^\prime)\in\Sigma_{U}} \mu(C_k(\omega^\prime,\tau^\prime)) }{\log m^{-k}} + \frac{\log \tilde{C}e^{-kP_{\Sigma_{U}(\phi_U)}} }{\log m^{-k}} \\
		& \geq \frac{\log \sum_{C_k(\omega^\prime,\tau^\prime)\subset R_k(\omega,\tau)} \mu(C_k(\omega^\prime,\tau^\prime)) }{\log m^{-k}} + \frac{\log \tilde{C}e^{-kP_{\Sigma_{U}(\phi_U)}} }{\log m^{-k}} \\
		& = \frac{\log \mu(R_k(\omega,\tau))}{\log m^{-k}} + \frac{\log \tilde{C}e^{-kP_{\Sigma_{U}(\phi_U)}} }{\log m^{-k}}.\end{split}\end{equation*}

		Taking liminfs and invoking Lemma \ref{thm:mcmalmostsure} and Lemma \ref{sym-loc} completes the proof.
		\end{proof}	

\section{Approximation arguments}
\label{sec:approx}

An important step in the proof of Theorems \ref{thm:boxmet} and \ref{thm:hausmet} is an argument where the geometric hole $B_{\epsilon}(z)$ is approximated by Markov holes.

In the case that $T$ is conformal it is reasonably straightforward to show that any equilibrium state corresponding to a H\"older continuous potential is doubling, which in turn provides the opportunity to use the ``$\delta$-annular decay property'' (see \cite{Buc99} for further details) - giving a simple method for going between finite unions of Markov holes and metric balls.  

If $T$ is non-conformal then this method no longer works as the equilibrium state may no longer be doubling.  The following lemma sidesteps this issue, showing directly that for measures such as the measure of maximal entropy or the measure of maximal dimension an analogous approximation argument holds.  

\begin{prop}Let $\mu_{{\rm max}}$ denote the measure of maximal entropy for $T:X\to X$ and $\tilde{\mu}_{{\rm max}}$ the measure of maximal entropy for $S:\pi(X)\to\pi(X)$.  Then for any $\delta>0$ there exists non-increasing functions $N,N^\prime:(0,1)\to\mathbb{N}$ and families $\{U_{N(\epsilon)}\}_{\epsilon},\{V_{N^\prime(\epsilon)}\}_{\epsilon}$ satisfying (\ref{ass1})-(\ref{ass5}) such that  $\Pi(V_{N^\prime(\epsilon)})\subset B_{\epsilon}(z) \subset \Pi(U_{N(\epsilon)}) $ and 
	\begin{subequations}\begin{align}(1-\delta)\mu_{{\rm max}} \left(\Pi\left(U_{N(\epsilon)}\right)\right) &\leq \mu_{{\rm max}}(B_{\epsilon}(z)) \leq (1+\delta)\mu_{{\rm max}}\left(\Pi\left(V_{N^\prime(\epsilon)}\right)\right)\label{eq:mainineq}\\
		(1-\delta)\tilde{\mu}_{{\rm max}}  \left(\Pi\left(\tilde{U}_{N(\epsilon)}\right)\right) & \leq \tilde{\mu}_{{\rm max}}(\tilde{B}_{\epsilon}(z)) \leq (1+\delta)\tilde{\mu}_{{\rm max}}\left(\Pi\left(\tilde{V}_{N^\prime(\epsilon)}\right)\right)\label{eq:mainineqtilde}\end{align}
		\end{subequations} for all $\epsilon\in (0,1)$.  Here \begin{equation*}\Pi(\tilde{U}_{N(\epsilon)})=\{x\in\pi(\Pi(U_{N(\epsilon)})\,:\,\pi^{-1}\{x\}\cap X\subset\Pi(U_{N(\epsilon)}) \}\end{equation*} with the corresponding definitions for $\Pi(\tilde{V}_{N^\prime(\epsilon)})$ and $\tilde{B}_{\epsilon}$.\label{thm:annulardecaymaxent}\end{prop}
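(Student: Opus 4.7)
The plan is to construct $U_{N(\epsilon)}$ and $V_{N'(\epsilon)}$ as outer and inner Markov covers of $B_\epsilon(z)$ at a scale finer than the natural scale $m^{-k(\epsilon)}$ by a parameter $L = L(\delta)$. Let $k(\epsilon) \in \mathbb{N}$ satisfy $m^{-k(\epsilon)} \leq \epsilon < m^{-k(\epsilon)+1}$ and set $l(N) = k(\epsilon) + L$. Define $U_{N(\epsilon)}$ as the union of those $R \in \mathcal{R}_{l(N)}$ whose image meets $B_\epsilon(z)$, and $V_{N'(\epsilon)}$ the union of those whose image lies inside $B_\epsilon(z)$. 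The inclusions $\Pi(V_{N'(\epsilon)}) \subset B_\epsilon(z) \subset \Pi(U_{N(\epsilon)})$ and $\Pi(\tilde{V}_{N'(\epsilon)}) \subset \tilde{B}_\epsilon(z) \subset \Pi(\tilde{U}_{N(\epsilon)})$ are immediate. Conditions (a)--(d) from Section \ref{sec:prelim} are then routine: monotonicity of the scales gives nesting; (b) is by construction; (c) holds with $\rho_{N(\epsilon)} = k(\epsilon)$ since $B_\epsilon(z)$ meets only $O(1)$ cylinders of length $k(\epsilon)$, yielding $\kappa = \inf_\epsilon k(\epsilon)/(k(\epsilon)+L) > 0$; and (d) is arranged by enlarging $U_N$ by at most $p$ cylinders in the periodic case.

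The substance of the proof is the measure estimate (\ref{eq:mainineq}). I would use the disintegration $\mu_{\text{max}} = \int \mu_y \, d\pi_*(\mu_{\text{max}})(y)$, where the marginal $\pi_*(\mu_{\text{max}})$ is the Bernoulli measure on $\pi(D)^\mathbb{N}$ with weights $z(j)/\#D$ and the conditional $\mu_y$ on the $x$-fiber over $y = \Pi(\tau)$ is the product measure whose $i$-th factor is uniform on $\{a : (a,\tau_i) \in D\}$. Both $\pi_*(\mu_{\text{max}})$ and the family $\{\mu_y\}_{y \in \pi(X)}$ are uniformly doubling on their one-dimensional supports, with constants depending only on $\max_j z(j)/\min_{j : z(j)>0} z(j)$, and hence each satisfies a uniform annular decay property with some exponent $\alpha > 0$. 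Decomposing the error region $\Pi(U_{N(\epsilon)}) \setminus B_\epsilon(z)$ into an $x$-annulus and a $y$-annulus, each of relative width at most $m^{-L}$, and applying annular decay in each coordinate gives
\begin{equation*}
\mu_{\text{max}}(\Pi(U_{N(\epsilon)}) \setminus B_\epsilon(z)) \leq C m^{-\alpha L}\, \mu_{\text{max}}(B_\epsilon(z)),
\end{equation*}
and analogously for $\mu_{\text{max}}(B_\epsilon(z) \setminus \Pi(V_{N'(\epsilon)}))$. Choosing $L$ so that $Cm^{-\alpha L} < \delta/(1+\delta)$ establishes (\ref{eq:mainineq}). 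Inequality (\ref{eq:mainineqtilde}) reduces to a one-dimensional annular decay for $\tilde{\mu}_{\text{max}}$, which holds since $\tilde{\mu}_{\text{max}}$ is a uniformly weighted Bernoulli measure on $\pi(D)^\mathbb{N}$ and hence doubling on $\pi(X)$.

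The main obstacle, as flagged in the preamble to this section, is that $\mu_{\text{max}}$ is not doubling as a measure on $(\mathbb{T}^2, |\cdot|)$, so the $\delta$-annular decay property of \cite{Buc99} cannot be invoked directly. The disintegration bypasses this by reducing the two-dimensional approximation to two independent one-dimensional annular decay estimates, each for a uniformly doubling Bernoulli-type measure. A secondary delicate point arises when $z(j) > 1$ for some $j$: then $\tilde{B}_\epsilon(z)$ can be small or even empty, and one must check that $\Pi(\tilde{U}_{N(\epsilon)})$ is correspondingly small. This follows from the observation that an $x$-fiber of intrinsic diameter exceeding $2\epsilon$ will, for $L$ large enough, also exceed $2(1+m^{-L})\epsilon$ and so not fit in the Markov enlargement $\Pi(U_{N(\epsilon)})$.
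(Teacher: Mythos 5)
The central step of your argument --- that $\pi_*(\mu_{\rm max})$ is uniformly doubling on $\pi(X)$, hence satisfies an annular decay estimate with constants depending only on the ratio of the weights --- is false whenever the $z(j)$ are not all equal, and this is precisely the obstruction the proposition is designed to circumvent. The marginal $\pi_*(\mu_{\rm max})$ is the Bernoulli measure on $m$-adic expansions with weights $z(j)/\#D$; take for concreteness $m=2$, $z(0)=2$, $z(1)=1$, so the weights are $(2/3,1/3)$. Writing $s=2^{-j-1}$, the interval $(1/2,1/2+s)$ is the dyadic cylinder $[10^{j}]$ and has measure $\tfrac13(\tfrac23)^{j}$, while $(1/2-2s,1/2)=[01^{j-1}]$ has measure $\tfrac23(\tfrac13)^{j-1}$; hence for $y=1/2-s$ the ratio $\pi_*(\mu_{\rm max})(B_{2s}(y))/\pi_*(\mu_{\rm max})(B_{s}(y))\geq \tfrac{1}{4}\cdot 2^{j+1}\to\infty$. (Your verification goes through for the fibre measures $\mu_y$, since all level-$k$ basic intervals in a fixed fibre have equal mass, and for $\tilde\mu_{\rm max}$, which is Ahlfors regular; the failure is specific to the non-uniform marginal.) The consequence for your construction is fatal: with $l(N)=k(\epsilon)+L$ and $L=L(\delta)$ fixed, the error region lies in an annulus of relative width $\tau\approx m^{-L}$, and for centres $z$ with $\pi(z)+\epsilon$ just below a low-order $m$-adic point one checks (combining the marginal asymmetry above with Frostman bounds on the fibres) that $\mu_{\rm max}\bigl(B_{(1+\tau)\epsilon}(z)\setminus B_\epsilon(z)\bigr)/\mu_{\rm max}(B_\epsilon(z))$ is unbounded as $\epsilon\to 0$ for every fixed $\tau$. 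So no choice of $L$ depending only on $\delta$ can give (\ref{eq:mainineq}).

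The repair is to let the approximating scale shrink polynomially in $\epsilon$ rather than by a constant factor: one takes $k(\epsilon)\sim C(\delta)\log(1/\epsilon)$, so that $m^{-k(\epsilon)}\approx\epsilon^{C'}$. Then the two horizontal strips of the error region (the preimages under $\pi$ of intervals of length $O(m^{-k})$ near $\pi(z)\pm\epsilon$) are controlled by the one-sided Frostman upper bound $\pi_*(\mu_{\rm max})(I)\leq c_1(\mathrm{diam}\,I)^{s_1}$ --- which does hold, unlike doubling --- giving an absolute bound $c_2m^{-ks_1}$; the middle region is controlled fibrewise as in your disintegration; and these absolute bounds are converted into relative ones using a Frostman \emph{lower} bound $\mu_{\rm max}(B_\epsilon(z))\geq c\,\epsilon^{s}$, whence the ratio is $O(\epsilon^{-s}\gamma^{k})$ and one chooses $k$ accordingly. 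Note that $k(\epsilon)\sim C\log(1/\epsilon)$ is still compatible with assumption (\ref{ass4}), since $\rho_N\approx\eta\log_m(1/\epsilon)$ there as well (your claim that $\rho_N=k(\epsilon)$ suffices is also off: a ball of radius $\epsilon\approx m^{-k}$ meets unboundedly many cylinders of length $k$ in the $x$-direction, so one must take $\rho_N\approx[\eta k]$). Finally, your treatment of (\ref{eq:mainineqtilde}) in the case $z(j)>1$ does not address the right condition: $y\notin\Pi(\tilde U_{N})$ requires a point of $\pi^{-1}\{y\}\cap X$ \emph{outside} the enlarged set, not merely that the fibre has large diameter; as stated the inequality can fail when $\tilde B_\epsilon=\emptyset$ but $\Pi(\tilde U_N)\neq\emptyset$, and this degenerate dichotomy has to be handled separately (as it is, for a.e.\ $z$, in Lemma \ref{thm:BorelCantelli1}).
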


\begin{proof}		We only prove the existence of the outer approximation $\{U_{N(\epsilon)}\}_{\epsilon}$ as the proof of the inner approximation $\{V_{N^\prime(\epsilon)}\}_{\epsilon}$ is analogous.  We first deduce (\ref{eq:mainineq}).  For $k\geq 1$ and $\epsilon>0$ set

			\begin{equation*}\tilde{\mathcal{U}}_{k,\epsilon}=\{ C\in \mathcal{C}_k\,:\, \Pi(C)\cap \partial B_\epsilon(z)\neq\emptyset\}.	\end{equation*}

		We claim that there exists constants $c,s>0$ and $\gamma\in (0,1)$ such that

						\begin{equation}\mu_{{\rm max}}\left(\bigcup_{U\in\mathcal{U}_{k,\epsilon}} \Pi(U)\right) \leq c \epsilon^{-s}\gamma^k \mu_{\rm max}(B_\epsilon(z))\label{eq:keyineq}\end{equation} for all $\epsilon>0$ and $k\geq 1$.  Observe that 
						\begin{equation*} \begin{split}\bigcup_{U\in\mathcal{U}_{k,\epsilon}} \Pi(U) \subseteq & B_{\epsilon+\sqrt{2}m^{-k}}(z)\setminus B_{\epsilon-\sqrt{2}m^{-k}}(z) \\
							&\subseteq \underbrace{\pi^{-1}[\pi(z)+\epsilon-\sqrt{2}m^{-k},\pi(z)+\epsilon+\sqrt{2}m^{-k}]}_{:=A} \\
							& \cup \underbrace{\pi^{-1}[\pi(z)-\epsilon+\sqrt{2}m^{-k},\pi(z)+\epsilon-\sqrt{2}m^{-k}] \cap B_{\epsilon+\sqrt{2}m^{-k}}(z)\setminus B_{\epsilon-\sqrt{2}m^{-k}}(z)}_{:=B} \\
							& \cup \underbrace{\pi^{-1}[\pi(z)-\epsilon-\sqrt{2}m^{-k},\pi(z)-\epsilon+\sqrt{2}m^{-k}]}_{:=C}. \end{split}\end{equation*}

							\begin{figure}[h]
							\begin{center}
							\includegraphics[width=1\textwidth]{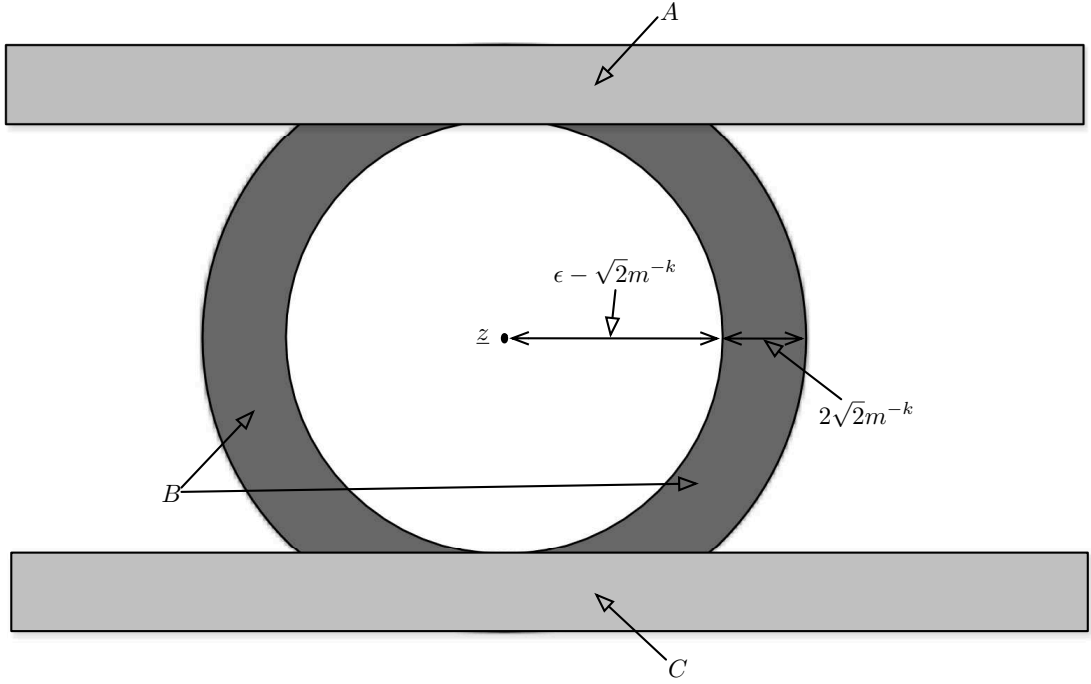}
							\end{center}
							\caption{The decomposition of the set $B_{\epsilon+\sqrt{2}m^{-k}}(z)\setminus B_{\epsilon-\sqrt{2}m^{-k}}(z)$ into the sets $A,B$ and $C$. }\label{fig:approx}
							\end{figure}	

						The measure $\pi_*(\mu_{\rm max})$ corresponds to the $(\# D^{-1}z(j))_{j=0}^{m-1}$-Bernoulli measure and so by a standard argument one may show the existence of constants $c_1,s_1>0$ such that $\pi_*(\mu_{\rm max})(U)\leq c_1({\rm diam}(U))^{s_1}$ for any open set $U\subset \mathbb{S}^1$.  Thus both (A) and (C) may be estimated as
						\begin{equation}\label{eq:AC} \mu_{\rm max}(A),\mu_{\rm max}(C) \leq c_1(2\sqrt{2}m^{-k})^{s_1} = c_2 m^{-ks_1}.\end{equation}

						We now estimate (B).  Decompose $\mu_{\rm max}=\pi_*(\mu_{\rm max})\times \mu_{y}$.  Suppose that $y\in\pi(X)$ has a unique base $m$-expansion and writing $y=\sum_{k=1}^{\infty} y_k m^{-k}$ and $x=\in \pi^{-1}\{y\} \cap X$ then it is easy to see that if $C\in\mathcal{C}_k$ is any cylinder with $(x,y)\in \Pi(C)$ then

						\begin{equation} \label{eq:measurefibre}\mu_y(\Pi(C))=\prod_{i=0}^{k-1} z(y_i)^{-1}\end{equation}

						Choose $N=N(\epsilon,k)$ so that $n^{-(N+1)} < 2\sqrt{2 \epsilon m^{-k}} \leq n^{-N}$.  We observe that for any interval $I\subset \pi^{-1}\{y\}$ of diameter $2\sqrt{2}m^{-k}$ there exist at most two $C_1,C_2\in\mathcal{C}_k$ such that $\Pi(C_1),\Pi(C_2)$ intersect $I$.  Accordingly, using (\ref{eq:measurefibre}) we may deduce that 
						\begin{equation}\label{eq:measurefibre2}\mu_y(I)\leq 2 \prod_{i=0}^{N-1} z(y_i)^{-1}.\end{equation}

						Next choose $L=L(\epsilon)$ so that $m^{-(L(\epsilon)+1)}<2\epsilon \leq m^{-L(\epsilon)}$.  Then the projection of the ball $\pi(B_{\epsilon-\sqrt{2}m^{-k}}(z))$ is covered by at most two $m$-adic intervals of length $m^{-L}$ which we denote $I_1,I_2$.  Thus we see that 
						\begin{equation}\begin{split} \mu_{\rm max}(B) & \leq 4\int_{I_1\cup I_2} \prod_{i=0}^{N-1} z(y_i)^{-1} d(\pi_* \mu_{\rm max})(y) \\
							& \leq 4 \left(\int_{I_1\cup I_2} \prod_{i=0}^{L-1}z(y_i)^{-1} d(\pi_*\mu_{\rm max})(y) \right)\left(\#\pi(D)/\#D\right)^{N-L} \\
							& \leq 4 \left(\#\pi(D)/\#D\right)^{N-L}:=4\alpha^{N-L}.\label{eq:measureB}
						\end{split}\end{equation}	
						From the definitions of $L$ and $N$ we deduce that
						\begin{equation*} N-L  \geq -\frac{\log 2^{3/2+\eta^{-1}}nm^{-k/2}\epsilon^{1/2+\eta^{-1}}}{\log n}:=c_3\log c_4m^{k s_2}\epsilon^{-s_3} 
						 \end{equation*}
						While combining the above estimate with (\ref{eq:measureB}) we deduce that 

						\begin{equation}\label{eq:measureB1} \mu_{\rm max}(B) \leq 4 \left(c_4 m^{k s_2}\epsilon^{-s_3}\right)^{c_3/\log\alpha}. \end{equation}

						We observe that the assumption that there exists $0\leq j <m$ with $z(j)>1$ forces $\alpha<1$, thus $m^{s_2c_3/\log \alpha}<1$ which implies that the right hand side of (\ref{eq:measureB1}) decays exponentially in $k$.  Combining this with (\ref{eq:AC}) proves claim (\ref{eq:keyineq}).

						Now fix $\delta>0$ and choose $k=k(\epsilon)$ such that $c\epsilon^{-s}\gamma^k<\delta < c\epsilon^{-s}\gamma^{k-1}$.  Set 
						\begin{equation*}U_{N(\epsilon)}=\left\{C\in\mathcal{C}_k\,:\,\Pi(C)\cap B_\epsilon(z)\neq\emptyset\right\}\}.\end{equation*}

						Then (\ref{eq:keyineq}) implies that
						\begin{equation*}\begin{split} \mu_{\rm max}(\Pi(U_{N(\epsilon)})) & \leq \mu_{\rm max}(B_\epsilon(z)) + \mu_{\rm max}\left(\bigcup_{U\in\mathcal{U}_{k,\epsilon} } \Pi(U) \right) \\
							& \leq (1+\delta)\mu_{\rm max}(B_\epsilon(z))\end{split}\end{equation*} which shows (\ref{eq:mainineq}).  We now show that the family $\{U_{N(\epsilon)}\}_{\epsilon}$ satisfies properties (\ref{ass1})-(\ref{ass5}).  Properties (\ref{ass1}), (\ref{ass2}) and (\ref{ass4}) follow immediately from the construction.  Finally, to see (\ref{ass5}) we observe that if $z$ is periodic with prime period $p$ and $C\in\mathcal{C}_p$ is the unique cylinder of length $p$ then for small enough $\epsilon$ we have that 
							\begin{equation*} \Pi(C)\cap T^{-p}(B_\epsilon(z)) \subseteq B_\epsilon(z).\end{equation*}  Since $\Pi(U_N)\subseteq B_{\epsilon+\sqrt{2}m^{-k(\epsilon)}}(z)$ we see that for small enough $\epsilon$ this property carries over to the family $\{U_{N(\epsilon)}\}_{\epsilon}$.
								
We now turn our attention to verifying (\ref{eq:mainineqtilde}) for the family $\{U_{N(\epsilon)}\}_{\epsilon}$.	We begin by observing that from the definition $\tilde{B}_\epsilon \subseteq \Pi(\tilde{U}_{N(\epsilon)})$.  Further, from the construction we have that 
\begin{equation*}\begin{split}\Pi(\tilde{U}_{N(\epsilon)}) \setminus\tilde{B}_\epsilon \subseteq [\pi(z)+\epsilon-\sqrt{2}m^{-N(\epsilon)},&\pi(z)+\epsilon+\sqrt{2}m^{-N(\epsilon)}] \\
	&\cup [\pi(z)-\epsilon-\sqrt{2}m^{-N(\epsilon)},\pi(z)-\epsilon+\sqrt{2}m^{-N(\epsilon)}]. \end{split}  \end{equation*} The equation (\ref{eq:AC}) and subsequent calculations verifies (\ref{eq:mainineqtilde}).
							\end{proof}

Using exactly the same arguments as in Proposition \ref{thm:annulardecaymaxent} we may deduce.

		\begin{prop}Let $\mu_{{\rm dim}}$ denote the measure of maximal dimension for $T:X\to X$.  Then for any $\delta>0$ there exists non-increasing functions $N,N^\prime:(0,1)\to\mathbb{N}$ and families $\{U_{N(\epsilon)}\}_{\epsilon},\{V_{N^\prime(\epsilon)}\}_{\epsilon}$ satisfying (\ref{ass1})-(\ref{ass5}) such that  $\Pi(V_{N^\prime(\epsilon)})\subset B_{\epsilon}(z) \subset \Pi(U_{N(\epsilon)}) $ and 
			\begin{equation*}\begin{split}(1-\delta)\mu_{{\rm dim}}\left(\Pi\left(U_{N(\epsilon)}\right)\right) & \leq \mu_{{\rm dim}}(B_{\epsilon}(z)) \leq (1+\delta)\mu_{{\rm dim}}\left(\Pi\left(V_{N^\prime(\epsilon)}\right)\right)\\
					(1-\delta)\pi_*(\mu_{\rm dim})  \left(\Pi\left(\tilde{U}_{N(\epsilon)}\right)\right) & \leq \pi_*(\mu_{\rm dim})(\tilde{B}_{\epsilon}(z)) \leq (1+\delta)\pi_*{\mu_{\rm dim}}\left(\Pi\left(\tilde{V}_{N^\prime(\epsilon)}\right)\right)\end{split}\end{equation*} for all $\epsilon\in (0,1)$.\label{thm:annulardecaymaxdim}\end{prop}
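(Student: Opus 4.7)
The plan is to run the proof of Proposition \ref{thm:annulardecaymaxent} verbatim, with $\mu_{\rm dim}$ in place of $\mu_{\rm max}$ throughout, and verify that the handful of ingredients that actually involve the measure still go through. The candidate families $\{U_{N(\epsilon)}\}_\epsilon$ and $\{V_{N'(\epsilon)}\}_\epsilon$ will be the same ones built there; since properties (\ref{ass1})--(\ref{ass5}) depend only on the geometric construction of the Markov approximations and not on the measure, their verification is unchanged. What needs rechecking is only the analogue of the key estimate (\ref{eq:keyineq}), which in turn reduces to re-estimating the three pieces $A$, $B$, $C$ of the annular decomposition of $B_{\epsilon+\sqrt{2}m^{-k}}(z)\setminus B_{\epsilon-\sqrt{2}m^{-k}}(z)$.

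For the two ``horizontal strip'' pieces $A$ and $C$: by construction $\pi_*(\mu_{\rm dim})$ is the Bernoulli measure on $\pi(D)^{\mathbb{N}}$ with weights $(z(j)^\eta/m^s)_{j\in\pi(D)}$, and by (\ref{eq:hausdef}) these weights sum to $1$. Since each weight is strictly less than $1$, there exist constants $c_1,s_1>0$ such that $\pi_*(\mu_{\rm dim})(I)\leq c_1({\rm diam}\,I)^{s_1}$ for every interval $I\subset\mathbb{S}^1$. This is precisely the input used in (\ref{eq:AC}) and supplies the bound $\mu_{\rm dim}(A),\mu_{\rm dim}(C)\leq c_2 m^{-ks_1}$.

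For the middle piece $B$, I would disintegrate $\mu_{\rm dim}=\pi_*(\mu_{\rm dim})\times\mu_y$ and observe that the conditional fibre measures are unchanged from the maximal-entropy case: because the weights $z(j)^{\eta-1}/m^s$ are constant on each row of $D$, conditioning on $y$ gives uniform mass $1/z(y_i)$ on each of the $z(y_i)$ admissible symbols in row $y_i$. Hence (\ref{eq:measurefibre}) and (\ref{eq:measurefibre2}) transcribe unchanged, and the chain of estimates leading to (\ref{eq:measureB}) reproduces the bound $\mu_{\rm dim}(B)\leq 4\alpha_{\rm dim}^{N-L}$, now with
\begin{equation*}
\alpha_{\rm dim}=\int z(y_0)^{-1}\,d\pi_*(\mu_{\rm dim})(y)=\sum_{j\in\pi(D)}\frac{z(j)^{\eta-1}}{m^s}
\end{equation*}
in place of $\alpha=\#\pi(D)/\#D$.

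The one substantive new step—the only place where something must be checked rather than copied—is the inequality $\alpha_{\rm dim}<1$, which is what drives the exponential decay in $k$. Multiplying through by $m^s=\sum_j z(j)^\eta$ shows that $\alpha_{\rm dim}<1$ is equivalent to $\sum_j z(j)^{\eta-1}(z(j)-1)>0$; since the terms with $z(j)\in\{0,1\}$ vanish, this holds precisely when some $z(j)>1$, which is the hypothesis under which Proposition \ref{thm:annulardecaymaxdim} is applied in case (2) of Theorem \ref{thm:hausmet}. With $\alpha_{\rm dim}<1$ in hand the analogue of (\ref{eq:measureB1}) yields exponential decay in $k$, the key inequality (\ref{eq:keyineq}) follows, and the rest of the argument---definition of $N(\epsilon)$ and $N'(\epsilon)$, verification of (\ref{ass1})--(\ref{ass5}), and derivation of the projected inequality corresponding to (\ref{eq:mainineqtilde}) from the same decomposition applied to $\pi_*(\mu_{\rm dim})$---transcribes line by line from Proposition \ref{thm:annulardecaymaxent}.
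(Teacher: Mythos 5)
Your proposal is correct and matches the paper exactly: the paper's own ``proof'' of this proposition is the single sentence that it follows by ``exactly the same arguments as in Proposition \ref{thm:annulardecaymaxent},'' and you have filled in precisely the measure-dependent steps (the H\"older bound for $\pi_*(\mu_{\rm dim})$, the unchanged fibre measures, and the replacement of $\alpha=\#\pi(D)/\#D$ by $\alpha_{\rm dim}=\sum_j z(j)^{\eta-1}/m^s$). Your observation that $\alpha_{\rm dim}<1$ requires some $z(j)>1$ is the exact analogue of the condition the paper itself invokes for $\alpha<1$ in the proof of Proposition \ref{thm:annulardecaymaxent}, and in the complementary case $\mu_{\rm dim}=\mu_{\rm max}$ so the statement reduces to that proposition anyway.
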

				
For $z\in X$ and $\epsilon>0$ small we are required to understand the dynamics of the projection of the survivor set $\pi(X_{B_\epsilon(z)})$ which is can be seen to be a survivor set in its own right.  Setting $\tilde{B}_{\epsilon}=\{y\in\pi(B_\epsilon(z))\,:\,\pi^{-1}\{y\}\cap X\subset B_\epsilon(z)\}$ we see that 
					\begin{equation*}\pi(X_\epsilon)=\pi(X)\setminus\bigcup_{k=0}^{\infty} S^{-k}(\tilde{B}_\epsilon).\end{equation*}

					The set $\tilde{B}_\epsilon$ may not necessarily be a ball, although clearly $\emptyset \subseteq \tilde{B}_\epsilon \subseteq \pi(B_\epsilon(z))$.  The next two Lemmas describe certain circumstances under which we may conclude that the set $\tilde{B}_\epsilon$ approximates either of these two extremes.

\begin{lemma}Let $\rho$ denote either 
\begin{enumerate}\item $\tilde{\mu}_{\rm max}$, the measure of maximal entropy for the system $S:\pi(X)\to\pi(X)$ or
	\item $\pi_*(\mu_{\rm dim})$, the projection of the measure of maximal dimension $\mu_{\rm dim}$ under the map $\pi$.\end{enumerate} 
	Let $\nu$ denote a $S$-invariant Borel probability measure with support in $X$ such that if $(0,0)\in X$ then we have
\begin{equation*}\underline{d}_\nu(0):=\liminf_{r\to 0}\frac{\log \nu(B_r(0))}{\log r}>0.\end{equation*}	
	
Then \begin{enumerate}
						\item If $0\leq z(i) \leq 1$ for all $i$ then \begin{equation*}\lim_{\epsilon\to 0}\frac{\rho(\tilde{B}_\epsilon)}{\rho(\pi(B_\epsilon(z)))} = 1\end{equation*} for $\nu$-almost all $z\in X$.

					\item Otherwise, $z(i)>1$ for some $i$ and for $\nu$-almost all $z\in X$ there exists $\epsilon_0=\epsilon_0(z)>0$ such that
					\begin{equation*}\tilde{B}_\epsilon= \emptyset\end{equation*} for all $0<\epsilon<\epsilon_0$.
					\end{enumerate}
					\label{thm:BorelCantelli1}
					\end{lemma}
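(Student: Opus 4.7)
My plan is to treat the two cases separately. Case 2 (where some $z(j_0) \geq 2$) combines a Borel--Cantelli / Poincar\'e-recurrence argument with a simple geometric observation about fiber widths. Case 1 (where every $z(j) \leq 1$) reduces to a classical annular estimate, once one observes that $\pi|_X$ has a $1/\eta$-H\"older inverse.

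For Case 2, I fix $j_0 \in J_H := \{j : z(j) \geq 2\}$ together with two distinct columns $i_0, i_0'$ satisfying $(i_0, j_0), (i_0', j_0) \in D$. The set $A_H := \{w \in X : \pi(w)_0 \in J_H\}$ is a non-empty finite union of length-one cylinders, so $\nu(A_H) > 0$ by the full-support hypothesis. Applying $T$-invariance and Poincar\'e recurrence to each ergodic component of $\nu$, I get that $\nu$-almost every $z$ satisfies $T^{k_0} z \in A_H$ for some $k_0 = k_0(z) < \infty$; equivalently the base-$m$ expansion of $\pi(z)$ carries a heavy digit at some finite position. The potentially exceptional ergodic components -- those supported on $\pi^{-1}(Y^*) \cap X$, where $Y^* \subset \pi(X)$ is the attractor of the light-only sub-IFS -- would have to concentrate on a proper $T$-invariant closed subset of $X$; the assumption $\mathrm{supp}(\nu) = X$ together with $\underline{d}_\nu(0) > 0$ (which handles the only fixed-point obstruction at the corner $(0,0) \in X$) is arranged precisely so that the $\nu$-mass of such components vanishes. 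Once $k_0(z)$ is available I pick $\epsilon_0 = \epsilon_0(z)$ small enough that $\pi(B_{\epsilon_0}(z)) \subset I_{k_0 + 1}(\pi(z))$ and $2\epsilon_0 < n^{-(k_0+1)}$. For $0 < \epsilon < \epsilon_0$, every $y \in \pi(B_\epsilon(z))$ shares $\pi(z)$'s heavy digit at position $k_0$, so its fiber $\pi^{-1}\{y\} \cap X$ contains two points whose $x$-coordinates differ by at least $n^{-(k_0+1)} > 2\epsilon$; the fiber cannot lie inside the disk $B_\epsilon(z)$, forcing $\tilde{B}_\epsilon = \emptyset$.

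For Case 1, the map $\pi|_X : X \to \pi(X)$ is a continuous bijection, hence a homeomorphism; its inverse $\hat{x}$ is $1/\eta$-H\"older because if $y, y' \in \pi(X)$ share their first $k$ base-$m$ digits then the unique-column property forces their lifts to share the first $k$ base-$n$ digits, so $|\hat{x}(y) - \hat{x}(y')| \leq n^{-k} = |y - y'|^{1/\eta}$. A short computation places $\pi(B_\epsilon(z)) \setminus \tilde{B}_\epsilon$ inside an annulus of $y$-width $O(\epsilon^{2/\eta - 1}) = o(\epsilon)$ near the boundary of $\pi(B_\epsilon(z))$; since $\rho$ is Ahlfors $s$-regular on $\pi(X)$ (immediate from the open set condition for the sub-IFS on $\mathbb{S}^1$), this gives $\rho(\text{annulus})/\rho(\pi(B_\epsilon(z))) = O(\epsilon^{s(2/\eta - 2)}) \to 0$, and the ratio in the lemma tends to $1$. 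The hardest step is the ergodic-decomposition piece of Case 2 -- verifying that the hypothesis pair $(\mathrm{supp}(\nu) = X,\ \underline{d}_\nu(0) > 0)$ really does block $\nu$ from giving positive mass to the $T$-invariant proper closed subset $\pi^{-1}(Y^*) \cap X$; once that is in place the remainder is a routine geometric and measure-theoretic calculation.
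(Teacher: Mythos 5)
There is a genuine gap in your Case 1, and it comes from misreading what the hypothesis $\underline{d}_\nu(0)>0$ is for. The inverse of $\pi|_X$ is \emph{not} metrically $1/\eta$-H\"older: your bound $|\hat{x}(y)-\hat{x}(y')|\le n^{-k}$ is valid only when $y,y'$ share their first $k$ base-$m$ digits, and $|y-y'|\le m^{-k}$ does not imply this. If $y$ and $y'$ sit on opposite sides of an $m$-adic point of low level $j$, their lifts can differ by order $n^{-j}$, which may be of order $1$; so your annulus estimate breaks down whenever $\pi(B_\epsilon(z))$ straddles a low-level $m$-adic point. This is precisely what the paper's proof is built to control: it sets $X_l=\{z:|S^l(\pi(z))|<(mn^{-1})^l\}$, uses invariance and the local-dimension hypothesis at $0$ to get $\nu(X_l)\le (mn^{-1})^{ls}$, and applies Borel--Cantelli to conclude that for $\nu$-a.e.\ $z$ and all small $\epsilon$ the arc $\pi(B_\epsilon(z))$ lies inside a \emph{single} $m$-adic interval of level $l(\epsilon)\approx\log_n(1/\epsilon)$; only then does the combinatorial H\"older bound apply. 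In other words, $\underline{d}_\nu(0)>0$ is a Diophantine condition governing how fast the $S$-orbit of $\pi(z)$ approaches $0$ (equivalently, how well $\pi(z)$ is approximated by $m$-adic rationals), not a device for a ``fixed-point obstruction at the corner''. As written, your Case 1 fails for points $z$ whose projection is too well approximable by $m$-adic rationals, and nothing in your argument excludes them.

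Your Case 2 geometry is fine and essentially matches the paper (two admissible columns over a heavy digit at a fixed position $k_0$ produce fibre points $n^{-(k_0+1)}>2\epsilon$ apart, so the fibre cannot fit in $B_\epsilon(z)$). But the justification that $\nu$-a.e.\ $z$ visits $A_H$ is wrong: $\mathrm{supp}(\nu)=X$ says nothing about the supports of the ergodic components of $\nu$. For instance $\nu=\frac12\mu_{\rm max}+\frac12\delta_{z_0}$, with $z_0$ a $T$-fixed point lying over light rows only and $\pi(z_0)\neq 0$, has full support and satisfies the hypothesis at $0$, yet half its mass never enters $A_H$ (and for such $z_0$ the conclusion $\tilde{B}_\epsilon=\emptyset$ actually fails, since the fibre over $\pi(z_0)$ is the single point $z_0$). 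The statement is immediate when $\nu$ is ergodic, as in Theorem \ref{thm:boxmet}: $A_H$ is open and non-empty, so $\nu(A_H)>0$ and ergodicity gives $\nu(\bigcup_k T^{-k}A_H)=1$. The paper itself merely asserts the corresponding claim, so the gap is shared with the source, but your proposed repair via full support plus $\underline{d}_\nu(0)>0$ is not a repair.
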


\begin{proof}We only prove the case that $\rho=\tilde{\mu}_{\rm max}$ as the other case is proved in the same way.  We begin by observing that the assumption on $\nu$ implies that there exists constants $s,r_0>0$ such that
	\begin{equation*}\nu(B_r(0))\leq r^s\end{equation*} for all $0<r<r_0$.
	
Setting \begin{equation*}X_l:=\{z\in X\,:\,|S^l(\pi(z))|< (m n^{-1})^l\}.\end{equation*} Then by the invariance of $\nu$ we observe that $\nu(X_l)\leq (m n^{-1})^{l s}$ for large enough $l$.  Thus, by the Borel-Cantelli 
						lemma we deduce that 
						\begin{equation*}\nu\left(\bigcap_{k\geq 1}\bigcup_{ l\geq k} X_l\right)=0.\end{equation*} 

					Now let $z\in \bigcup_{k\geq 1} \bigcap_{l\geq k} X\setminus X_l$.  We first deal with the case where $0\leq z(i)\leq 1$ for all $i$.  Let $k=k(z)$ be chosen so that $|S^l(\pi(z))|\geq (m n^{-1})^l$ for all $l\geq k$. Choose $\epsilon_0=\epsilon_0(z)$ so that $0<\epsilon_0<n^{-k}$ then for any $0<\epsilon<\epsilon_0$	we set $n^{-(l(\epsilon)+1)}\leq \epsilon <n^{-l(\epsilon)}$.  From this we deduce that $|S^{l(\epsilon)}(\pi(z))| \geq (m n^{-1})^{l(\epsilon)}$ for all $0<\epsilon<\epsilon_0$ which in turn implies that there exists a unique cylinder $C_{l(\epsilon)}(z)\in\mathcal{C}_{l(\epsilon)}$ such that $z\in\Pi(C_{l(\epsilon)}(z))$.  Moreover, as $|S^{l(\epsilon)}(\pi(z))|\geq (m n^{-1})^{l(\epsilon)}$ we have that $\pi(B_\epsilon(z))\subseteq \pi(\Pi(C_{l(\epsilon)}(z)))$.  

					Moreover, the assumption that $0\leq z(i) \leq 1$ implies that $\pi^{-1}(\pi(C_{l(\epsilon)}(z)\cap X))\subseteq C_{l(\epsilon)}$.  We set 
					\begin{equation*}\tilde{C}_{l(\epsilon)}=\{ y \in \pi(B_\epsilon(z)\,:\,\pi^{-1}\{ y\}\subseteq B_{\epsilon}(z) \}.\end{equation*} 

					Clearly $\tilde{C}_{l(\epsilon)}\subseteq \tilde{B}_\epsilon \subseteq \pi(B_\epsilon(z))$ and so

					\begin{equation*} \frac{\tilde{\mu}_{\rm max}(\tilde{C}_{l(\epsilon)})}{\tilde{\mu}_{\rm max}(\pi(B_\epsilon(z)))}\leq \frac{\tilde{\mu}_{\rm max}(\tilde{B}_\epsilon)}{\tilde{\mu}_{\rm max}(\pi(B_\epsilon(z)))}\leq 1\end{equation*}

						\begin{figure}[h]
						\begin{center}
						\includegraphics[width=0.5\textwidth]{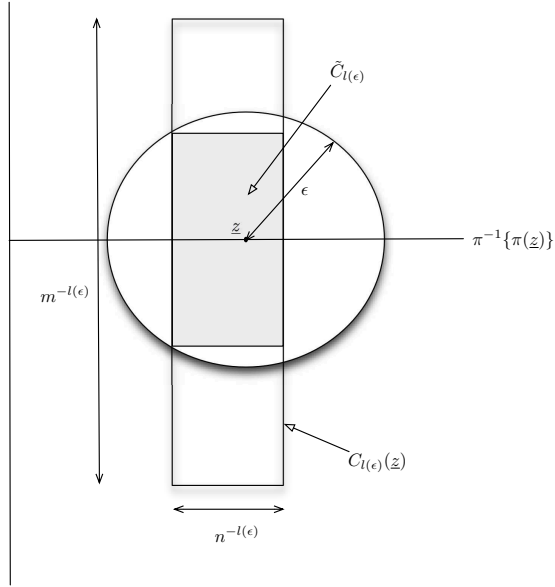}
						\end{center}
						\caption{Depiction of the set $\tilde{C}_{l(\epsilon)}$. }\label{fig:proj}
						\end{figure}

					It therefore suffices to show that $\frac{\tilde{\mu}_{\rm max}(\tilde{C}_{l(\epsilon)})}{\tilde{\mu}_{\rm max}(\pi(B_\epsilon(z)))}\to 1$ as $\epsilon\to 0$.  To see this we observe that  $\pi(B_\epsilon(z))\setminus \tilde{C}_{l(\epsilon)} \subset I_1 \cup I_2$ for two intervals of length at most $\sqrt{\epsilon^{2}-n^{-2l(\epsilon)}}\leq \frac{1}{2}m^{2\eta^{-1}}\epsilon^{2(1+\eta^{-1})}=c_2\epsilon^{2(1+\eta^{-1})} \epsilon^{2(1+\eta^{-1})}$.  Choose $r=r(\epsilon),\tilde{r}=\tilde{r}(\epsilon)$ so that
					\begin{equation*} m^{-r}  < \epsilon \leq m^{-(r-1)},\,\,\,\,\,\,\,\,\,\,\,\,\,\,\,\,\,\,   
						m^{-\tilde{r}}  < c_2\epsilon^{2(1+\eta^{-1})} \leq m^{-(\tilde{r}-1)}.
						\end{equation*}

					Then 

					\begin{equation*}\frac{\tilde{\mu}_{\rm max}(\tilde{C}_{l(\epsilon)})}{\tilde{\mu}_{\rm max}(\pi(B_\epsilon(z)))}  \geq 1-  4\frac{\# \pi(D)^{-\tilde{r}}}{\# \pi(D)^{-r}} \to 1 \end{equation*} as $\epsilon\to 0$.

					Now suppose that there exists $i$ such that $z(i)>1$, in which case for $\nu$-almost all $z\in X$ the set $\pi^{-1}\{\pi(z)\}\cap X$ contains infinitely many points.  Let $z\in\bigcup_{k\geq 1}\bigcap_{l\geq k} X\setminus X_l$ and suppose further that there exists $\tilde{z}\neq z$ such that $\pi(\tilde{z})=\pi(z)$.  Choose $0<\epsilon_0<2^{-1}|z-\tilde{z}|$.  For $0<\epsilon<\epsilon_0$ we let $k=k(\epsilon)$ be chosen so that $n^{-k} < \epsilon \leq n^{-(k-1)}$, in which case we see that there exist $C_1,C_2\in\mathcal{C}_k$ such that $z\in\Pi(C_1)$, $\tilde{z}\in\Pi(C_2)$, $\pi(B_\epsilon(z))\subseteq \pi(C_1)=\pi(C_2)$.  Thus, for any $y\in\pi(B_\epsilon(z))$ there exists $\tilde{\underline{y}}\in \Pi(C_2)$ such that $\pi(\tilde{\underline{y}})=y$, which implies that $\tilde{B}_\epsilon=\emptyset$.  This completes the proof.

							\begin{figure}[h]
							\begin{center}
							\includegraphics[width=0.75\textwidth]{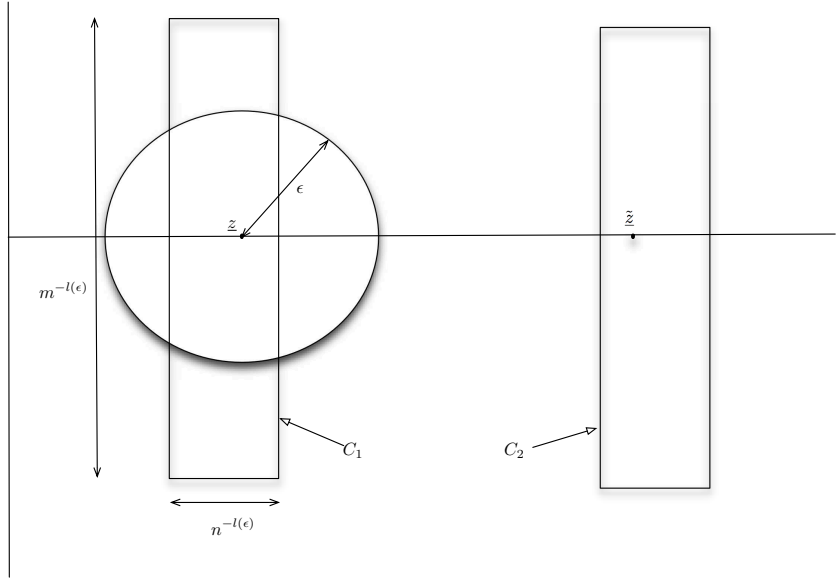}
							\end{center}
							\caption{Depiction of the sets $C_1$ and $C_2$. }\label{fig:proj2}
							\end{figure}

\end{proof}

\section{Metric holes}
\label{sec:metric}

In this section we prove Theorems \ref{thm:boxmet} and \ref{thm:hausmet}. Theorem \ref{thm:boxmet} is proved by a combination of using Theorem \ref{thm:boxMark} and the results from Section 5. Theorem \ref{thm:hausmet} is proved by combining Theorem \ref{thm:hausMark}, results from section 5 and a simple perturbation result.

\subsection*{Proof of Theorem \ref{thm:boxmet}} 
Fix a Borel probability measure $\nu$ supported on $X$ which satisfies the assumption of Lemma \ref{thm:annulardecaymaxdim}.  Let $z\in X$ be a generic point for $\nu$.  We first deal with the case that $0\leq z(j) \leq 1$ for all $0\leq j < m$.  Fix $\delta>0$ and for $\epsilon>0$ we let $\{U_{N(\epsilon)}\}_{\epsilon}$ and $\{V_{N^\prime(\epsilon)}\}_{\epsilon}$ denote the associated outer and inner approximations guaranteed by Proposition \ref{thm:annulardecaymaxent}.
	
Applying Theorem \ref{thm:boxMark} we see that

\begin{equation}\begin{split}\overline{{\rm dim}}_B(X_{B_\epsilon(z)}) & \geq {\rm dim}_B(X_{U_{N(\epsilon)}}) \\
	& = {\rm dim}_B(X)-\frac{1}{\log n}r_{\mu_{{\rm max}}}(U_{N(\epsilon)})-\left(\frac{1}{\log m}-\frac{1}{\log n}\right)r_{\tilde{\mu}_{{\rm max}}}(\tilde{U}_{N(\epsilon)}).\label{eq:boxmet1}\end{split}\end{equation}	
	
Applying Proposition \ref{thm:mainpert} to the above we see that

\begin{equation}\begin{split}\overline{{\rm dim}}_B(X_{B_\epsilon(z)}) & = {\rm dim}_B(X)-\frac{d_B(z)}{\log n}\mu_{\max}(U_{N(\epsilon)})-\tilde{d}_B(z)\left(\frac{1}{\log m}-\frac{1}{\log n}\right)\tilde{\mu}_{{\rm max}}(\tilde{U}_{N(\epsilon)})\} \\
	&\,\,\,\,\,\,\,\,\,\,\,\,\,\,\,\,\,\,\,\,\,\,\,\,\,\,\,\,\,\,\,\,\,\,\,\,\,\,\,\,\,\,\,\,\,\,\,\,+o(\max\{\mu_{\max}(U_{N(\epsilon)}),\tilde{\mu}_{{\rm max}}(\tilde{U}_{N(\epsilon)})\}),\label{eq:boxmet2}\end{split}\end{equation} where $d_B(z)$ and $\tilde{d}_B(z)$ are the factors that appear in Proposition \ref{thm:mainpert} which depend on the periodicity of $z$ and the potential associated to the measures $\mu_{\rm max}$ and $\tilde{\mu}_{\rm max}$ (which are nothing more than $-\log$ of the entropy in both cases).

Applying Proposition \ref{thm:annulardecaymaxent} we see that

\begin{equation}\begin{split}\overline{{\rm dim}}_B(X_{B_\epsilon(z)}) & \geq {\rm dim}_B(X)-(1-\delta)^{-1}\left(\frac{d_B(z)}{\log n}\mu_{\max}(B_\epsilon(z))+\tilde{d}_B(z)\left(\frac{1}{\log m}-\frac{1}{\log n}\right)\tilde{\mu}_{{\rm max}}(\tilde{B}_\epsilon(z))\right) \\
	&\,\,\,\,\,\,\,\,\,\,\,\,\,\,\,\,\,\,\,\,\,\,\,\,\,\,\,\,\,\,\,\,\,\,\,\,\,\,\,\,\,\,\,\,\,\,\,\,\,\,\,\,\,\,\,\,\,\,\,\,\,\,\,\,\,\,\,\,\,\,\,\,\,\,\,\,\,\,\,\,\,\,\,\,\,\,\,\,\,\,\,\,\,\,\,\,\,\,\,\,\,\,\,\,\,\,\,\,\,\,+o(\max\{\mu_{\max}(B_\epsilon(z)),\tilde{\mu}_{{\rm max}}(\tilde{B}_\epsilon(z)\}) \\
	& \geq {\rm dim}_B(X)-(1-\delta)^{-1}\left(\frac{d_B(z)}{\log n}\mu_{\max}(B_\epsilon(z))+\tilde{d}_B(z)\left(\frac{1}{\log m}-\frac{1}{\log n}\right)\tilde{\mu}_{{\rm max}}(\pi(B_\epsilon(z)))\right) \\
		&\,\,\,\,\,\,\,\,\,\,\,\,\,\,\,\,\,\,\,\,\,\,\,\,\,\,\,\,\,\,\,\,\,\,\,\,\,\,\,\,\,\,\,\,\,\,\,\,\,\,\,\,\,\,\,\,\,\,\,\,\,\,\,\,\,\,\,\,\,\,\,\,\,\,\,\,\,\,\,\,\,\,\,\,\,\,\,\,\,\,\,\,\,\,\,\,\,\,\,\,\,\,\,\,\,\,\,\,\,\,+o(\max\{\mu_{\max}(B_\epsilon(z)),\tilde{\mu}_{{\rm max}}(\pi(B_\epsilon(z))\}) \end{split}. \label{eq:boxmet3}\end{equation}

Finally, we observe the assumption that $0\leq z(j) \leq 1$ for all $j$ implies that $\pi_*(\mu_{\max})=\tilde{\mu}_{\max}$ and so from (\ref{eq:boxmet3}) we see that
\begin{equation*}\overline{{\rm dim}}_B(X_{B_\epsilon(z)})\geq{\rm dim}_B(X) - (1-\delta)^{-1} \frac{1}{\log m}\left(\eta d_B(z)+(1-\eta)\tilde{d}_B(z)\right)\mu_{\rm max}(B_\epsilon(z)) + o(\mu_{\rm max}(B_\epsilon(z))) \end{equation*} which demonstrates the lower bound for the upper box dimension.  The proof of the remaining cases are similar and therefore omitted.\qed

\subsection*{Proof of Theorem \ref{thm:hausmet}}  
Before proving Theorem \ref{thm:hausmet} we first prove a technical lemma that is required in the proof of the upper bound.  For a Markov set $U$ we let \begin{equation*}F_U:=\sup_{\underline{p}\in\Delta_D}\left\{{\rm dim}_H(\mu_{\underline{p}})-\frac{\eta \mu_{\underline{p}}(U)+(1-\eta)\pi_*(\mu_{\underline{p}})(U)}{\log m}\right\}\end{equation*}

\begin{lemma}Let $\mu_{{\rm dim}}$ denote the measure of maximal dimension for $T:X\to X$. Suppose that $\{U_N\}_N$ is a nested family of Markov holes for which the symbolic analogues satisfy the hypotheses of Proposition \ref{thm:mainpert} then

\begin{equation*} F_{U_N}={\rm dim}_H(X)-\frac{\eta \mu_{dim}(U_N)+(1-\eta)\pi_*(\mu_{dim})(\tilde{U}_N)}{\log m}+o(\mu_{dim}(U_N)).\end{equation*}

	\label{thm:haussup}\end{lemma}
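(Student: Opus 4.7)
The plan is to show that the supremum defining $F_{U_N}$ is asymptotically attained at the probability vector $\underline{p}_{\rm dim}=(z(j)^{\eta-1}m^{-s})_{(i,j)\in D}$ corresponding to $\mu_{\rm dim}$. The lower bound
\[ F_{U_N}\geq {\rm dim}_H(X)-\frac{\eta\mu_{\rm dim}(U_N)+(1-\eta)\pi_*(\mu_{\rm dim})(\tilde{U}_N)}{\log m}\]
is immediate by substituting $\underline{p}=\underline{p}_{\rm dim}$ into the definition of $F_{U_N}$ and using ${\rm dim}_H(\mu_{\rm dim})={\rm dim}_H(X)$.

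For the matching upper bound I would split $\Delta_D$ into a near regime $\{|\underline{p}-\underline{p}_{\rm dim}|<\epsilon_N\}$ and a far regime $\{|\underline{p}-\underline{p}_{\rm dim}|\geq\epsilon_N\}$, where the scale $\epsilon_N$ is chosen below. On the near regime, because $U_N$ is a union of cylinders of common length $l(N)$ and $\mu_{\underline{p}}$ is Bernoulli, each cylinder $C\subset U_N$ satisfies
\[ \frac{\mu_{\underline{p}}(C)}{\mu_{\rm dim}(C)}=\prod_{k<l(N)}\frac{p_{C_k}}{p^{\rm dim}_{C_k}}=1+O(l(N)\epsilon_N),\]
uniformly in $\underline{p}$ with $|\underline{p}-\underline{p}_{\rm dim}|<\epsilon_N$ (provided $l(N)\epsilon_N$ stays small); summing over $C$ yields $\mu_{\underline{p}}(U_N)=\mu_{\rm dim}(U_N)(1+O(l(N)\epsilon_N))$, and the same comparison applies to $\pi_*(\mu_{\underline{p}})(\tilde{U}_N)$. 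Bounding ${\rm dim}_H(\mu_{\underline{p}})\leq{\rm dim}_H(X)$ trivially, the supremand on the near regime is therefore at most
\[ {\rm dim}_H(X)-\frac{\eta\mu_{\rm dim}(U_N)+(1-\eta)\pi_*(\mu_{\rm dim})(\tilde{U}_N)}{\log m}+O(l(N)\epsilon_N\,\mu_{\rm dim}(U_N)).\]
On the far regime, strict concavity of the Ledrappier--Young expression ${\rm dim}_H(\mu_{\underline{p}})=\tfrac{1}{\log n}(H(\underline{p})-H(\pi_*\underline{p}))+\tfrac{1}{\log m}H(\pi_*\underline{p})$, of which $\underline{p}_{\rm dim}$ is the unique interior maximizer, combined with a second-order Taylor expansion, gives ${\rm dim}_H(\mu_{\underline{p}})\leq{\rm dim}_H(X)-c|\underline{p}-\underline{p}_{\rm dim}|^2$ for some $c>0$ in a neighborhood, augmented by a uniform compactness bound ${\rm dim}_H(\mu_{\underline{p}})\leq{\rm dim}_H(X)-\delta$ outside that neighborhood; the supremand on the far regime is then bounded by ${\rm dim}_H(X)-\min(c\epsilon_N^2,\delta)$, which undercuts the claimed lower bound whenever $\epsilon_N^2\gg\mu_{\rm dim}(U_N)$.

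To close both estimates simultaneously one needs $\epsilon_N$ satisfying $\epsilon_N^2/\mu_{\rm dim}(U_N)\to\infty$ and $l(N)\epsilon_N\to 0$. The window for such $\epsilon_N$ is nonempty because the Bernoulli/Gibbs structure of $\mu_{\rm dim}$, together with assumption (c) of Proposition \ref{thm:mainpert} forcing $U_N\subset\bigcup_i[z^{(i)}]_{\rho_N}$ with $\rho_N\geq\kappa l(N)$, gives $\mu_{\rm dim}(U_N)\leq C e^{-c'l(N)}$; hence $l(N)\sqrt{\mu_{\rm dim}(U_N)}\to 0$ and any $\epsilon_N$ in, say, the range $[\mu_{\rm dim}(U_N)^{1/2}(\log\tfrac{1}{\mu_{\rm dim}(U_N)}),\,l(N)^{-1}(\log l(N))^{-1}]$ works. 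The main obstacle is precisely this two-sided balancing: the quadratic dimension deficit must dominate the measure term on the far regime while the linear-in-$l(N)$ Lipschitz error on the near regime must remain $o(\mu_{\rm dim}(U_N))$, and this is only possible because the Gibbs exponential decay of $\mu_{\rm dim}(U_N)$ beats the polynomial growth of $l(N)$. A subsidiary point is the negative definiteness of the Ledrappier--Young Hessian at $\underline{p}_{\rm dim}$, which follows on the interior of $\Delta_D$ from strict concavity of Shannon entropy, with boundary configurations absorbed into the uniform compactness bound.
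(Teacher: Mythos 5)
Your argument is correct and follows essentially the same strategy as the paper: a quadratic deficit bound for $\dim_H(\mu_{\underline{p}})$ at the interior maximizer $\underline{p}_{\rm dim}$ to localize the competition, a Lipschitz-type comparison of Bernoulli cylinder measures to control the perturbation term near $\underline{p}_{\rm dim}$, and the exponential decay $\mu_{\rm dim}(U_N)\lesssim (p^+)^{\kappa l(N)}$ (from hypothesis~(\ref{ass4})) to make the error terms balance. The paper packages this slightly differently — it directly derives the radius $r_N$ of a ball guaranteed to contain the true optimizer and then integrates a logarithmic-derivative bound to get a multiplicative error $|G_1^N(p^{\rm dim})|^{1+v}$, rather than introducing a free scale $\epsilon_N$ with a near/far split — but the ingredients and the reason the balancing succeeds are the same.
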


\begin{proof}
This is a classical problem in perturbation theory: given a smooth function $G_0$ whose properties we know, approximate the supremum of function $G=G_0+G_1$, where $G_1$ is small.

In our situation, the domain is the simplex $\Delta_D$ of Bernoulli measures and

\[
G_0(\underline{p}) = \dim_H(\mu_{\underline{p}}).
\]
We will not need the exact formula for $G_0$, we will only need the following properties:
\begin{itemize}
\item[i)] $\sup_{\underline{p}} G_0(\underline{p}) = \dim_H(X)$,
\item[ii)] this supremum is achieved at the McMullen distribution $p^{\rm dim}$ which is an interior point of $\Delta_D$. We will denote
\[
p^+ = \sup_{i,j} p_{i,j}^{\rm dim},\ \ p^- = \inf_{i,j} p_{i,j}^{\rm dim},
\] \item[iii)] this supremum is unique and the second derivative form $D^2 G_0$ at $p^{\rm dim}$ is negative definite. There exists $d>0$ such that
\[
G_0(p^{\rm dim}) - G_0(\underline{p}) \geq d (p^{\rm dim}-\underline{p})^2.
\]
\end{itemize}

We set

\[
G_1^{(N)}(\underline{p}) = - \frac \eta {\log m} \mu_{\underline{p}}(U_N) - \frac {1-\eta} {\log m} \mu_{\underline{p}}(\pi^{-1}(\tilde{U}_N)).
\]
Let us denote by $p^N$ the point at which $G^N = G_0 + G_1^N$ achieves its maximum. We are trying to estimate

\[
F_{U_N} = G^N(p^N).
\]

$U_N$ and $\pi^{-1}(\tilde{U}_N) \subset U_N$ are both finite unions of cylinders of level $l(N)$ and are both contained in some cylinder of level $\kappa l(N)$. The latter property implies that

\[
G_1^N(p^{\rm dim}) \geq -\frac 1 {\log m} (p^+)^{\kappa l(N)}.
\]
As $G_1^N$ is nonpositive, it implies that

\[
|p^{\rm dim} - p^N| \leq r_N= (d \log m)^{-1/2} (p^+)^{\kappa l(N)/2}
\]
for $N$ big enough. Also, for $N$ big enough,

\[
r_N < \frac 1 2 p^-.
\]
We restrict our attention to big $N$ and to $\underline{p} \in B_{r_N}(p^{\rm dim})$ from now on.

Let us consider now the former property. For any cylinder $C$ of level $l(N)$ and any Bernoulli measure $\mu_{\underline{p}}$, $\mu_{\underline{p}}(C)$ is a monomial of degree $l(N)$ in variables $p_{i,j}$:

\[
\mu_{\underline{p}}(C) = \prod_{i,j} p_{i,j}^{n_{i,j}},\ \ \sum_{i,j} n_{i,j} = l(N).
\]
Hence, for any $e\in T\Delta_D$ we have

\[
|D_e \mu_{\underline{p}}(C)| = |\sum_{i,j} e_{i,j} \frac {n_{i,j}} {p_{i,j}} \mu_{\underline{p}}(C)| \leq \frac {2l(N)} {p^-} \mu_{\underline{p}}(C)
\]
and

\[
|D_e G_1^N(\underline{p})| \leq \frac {2l(N)} {p^-} G_1^N(\underline{p}).
\]

It implies that for any $\underline{p}, \underline{p}' \in B_{r_N}(p^{\rm dim})$ we have

\[
\left|\frac {G_1^N(\underline{p}) - G_1^N(\underline{p}')} {G_1^N(\underline{p})}\right| \leq e^{2r_N l(N)/p^-}-1.
\]

As both $r_N$ and $G_1^N(p^{\rm dim})$ are exponentially small as function of $l(N)$, we can find $v>0$ not depending on $N$ such that for all $\underline{p}\in B_{r_N}(p^{\rm dim})$

\[
|G_1^N(p^{\rm dim}) - G_1^N(\underline{p})| \leq \left(G_1^N(p^{\rm dim})\right)^{1+v}.
\]

Thus, we have

\begin{equation*}\begin{split}
G_0(p^{\rm dim}) + G_1^N(p^{\rm dim}) \leq F_{U_N} & = G_0(p^N) + G_1^N(p^N) \\
& \leq G_0(p^{\rm dim}) + G_1^N(p^N) \\
& \leq G_0(p^{\rm dim}) + G_1^N(p^{\rm dim}) + O((\mu_{\rm dim}(U_N))^{1+v})
\end{split}\end{equation*}
and the proof is complete 

\end{proof}

We now deduce Theorem \ref{thm:hausmet}. 

\begin{proof}Fix a Borel probability measure $\nu$ supported on $X$ such that $\underline{{\rm dim}}_{\rm loc}(\pi_*(\nu))(0)>0$.  Let $z\in X$ be a generic point for $\nu$, further we assume that $z$ is non-periodic.  We will only deal with the case that $0\leq z(j) \leq 1$ for all $0\leq j < m$ as the proof of the other case is similar.  For the lower bound we let $\{U_{N(\epsilon)}\}_\epsilon$ denote the outer approximations guaranteed by Proposition \ref{thm:annulardecaymaxent}.
	
Applying Theorem \ref{thm:hausMark} we see that

\begin{equation}\begin{split}{\rm dim}_H(X_{B_\epsilon(z)}) & \geq {\rm dim}_H(X_{U_{N(\epsilon)}}) \\
	& \geq {\rm dim}_H(X)-\frac{r_{\mu_{\rm dim}}(U_{N(\epsilon)})}{\log m}.\label{eq:hausmet1}\end{split}\end{equation} 

Applying Proposition \ref{thm:mainpert} to the above we see that

\begin{equation}{\rm dim}_H(X_{B_\epsilon(z)})  = {\rm dim}_H(X)-\frac{\mu_{\rm dim}(U_{N(\epsilon)})}{\log m}+o(\mu_{ \rm dim}(U_{N(\epsilon)})).\label{eq:hausmet2}\end{equation} 

Applying Proposition \ref{thm:annulardecaymaxent} we see that

\begin{equation*}(\ref{eq:hausmet2})  \geq {\rm dim}_H(X)-(1-\delta)^{-1}\frac{1}{\log m}\mu_{\rm dim}(B_\epsilon(z))+o(\mu_{\rm dim}(B_\epsilon(z))) \end{equation*} which proves the lower bound.
	
	For the upper bound we take the inner approximations $\{V_{N^\prime(\epsilon)}\}_{\epsilon}$ guaranteed by Proposition \ref{thm:annulardecaymaxent}.\end{proof}
	
\bibliographystyle{abbrv}
\bibliography{dim_holes}

\end{document}